\newtheorem{definition}{Definition}[section]
\newtheorem{theorem}[definition]{Theorem}
\newtheorem{lemma}[definition]{Lemma}
\newtheorem{corollary}[definition]{Corollary}
\newtheorem{note}[definition]{Note}
\newtheorem{proposition}[definition]{Proposition}
\begin{document} 

\title{\bf A $Q$-polynomial structure associated \\with the projective geometry $L_N(q)$
}
\author{
Paul Terwilliger}
\date{}

\maketitle
\begin{abstract} 
There is a type of distance-regular graph, said to be $Q$-polynomial. In this paper
we investigate a generalized $Q$-polynomial property involving a graph that is not necessarily distance-regular. We give a detailed
description of an example associated with the projective geometry $L_N(q)$.

\bigskip

\noindent
{\bf Keywords}. Adjacency matrix; dual adjacency matrix; $Q$-polynomial property.
\hfil\break
\noindent {\bf 2020 Mathematics Subject Classification}.
Primary: 05E30; 
Secondary: 05C50.
 \end{abstract}
 
 \section{Introduction}
 There is a type of finite, undirected, connected graph, said to be distance-regular \cite{bannai, bcn, dkt, bbit}. These graphs have 
 the sort of combinatorial regularity that can be analyzed using algebraic methods, such as 
 linear algebra (eigenvalues/eigenvectors of the adjacency matrix \cite[Section~4.1]{bcn}, tridiagonal pairs \cite{itt}); geometry (linear programming bounds \cite{delsarte}, root systems 
 \cite[Chapter~3]{bcn});
  special functions (orthogonal polynomials \cite{leonard, LSnotes}, hypergeometric series \cite[Chapter 3]{bannai});
   and representation theory (the subconstituent algebra \cite{terwSub1, terwSub2, terwSub3, int}, the $q$-Onsager algebra \cite{itoOq, augIto}). 
   \medskip
   
   \noindent  For an integer $N\geq 1$, the $N$-cube $Q_N$ is an attractive example of a distance-regular graph. To define $Q_N$, start with a set $\mathbb S$ of cardinality $N$.
   The vertex set of $Q_N$ consists of the subsets of $\mathbb S$.  Vertices $y,z $ of $Q_N$ are adjacent whenever
   one of $y,z$ contains the other, and their cardinalities differ by one.
   For each vertex of $Q_N$ the corresponding subconstituent algebra was described by Go \cite{go}.
   \medskip
   
   \noindent 
   There is a type of distance-regular graph, said to be $Q$-polynomial  \cite[Chapter~8]{bcn}.
The $Q$-polynomial property was introduced by Delsarte \cite{delsarte} in his study of coding theory, and has been investigated intensely  ever since
 \cite{bannai, bbit, bcn, dkt, go,terwSub1,terwSub2, terwSub3, int}.
The $N$-cube is $Q$-polynomial; see \cite{delsarte} or \cite[Section~12]{go}.
\medskip

\noindent  We emphasize one feature of a $Q$-polynomial distance-regular graph $\Gamma$.
 For each vertex $x$ of $\Gamma$ there exists a certain diagonal matrix $A^*=A^*(x)$, called the dual adjacency matrix of $\Gamma$ with respect to $x$ \cite[Section~2]{pasc}. 
  The eigenspaces of $A^*$  are the
  subconstituents of $\Gamma$ with respect to $x$. The adjacency matrix $A$ of $\Gamma$ is related to $A^*$ by the fact that
  each of $A, A^*$ acts on the eigenspaces of the other one in a (block) tridiagonal fashion \cite[Section~13]{int}.   
 \medskip
 
 \noindent 
 Guided by the dual adjacency matrix concept, in \cite[Section~20]{int} we generalized the $Q$-polynomial property in three directions:
 (i) we drop the assumption that $\Gamma$  is distance-regular; 
  (ii) we drop the assumption that every vertex of $\Gamma$ has a dual adjacency matrix, and instead require that one distinguished vertex of $\Gamma$ has a dual adjacency matrix;
 (iii) we replace the adjacency matrix of $\Gamma$ by a weighted adjacency
 matrix. 
 The generalized $Q$-polynomial property is described in Definitions \ref{def:dualA}--\ref{def:Qx}
 below.
 \medskip
 
 \noindent Broadly speaking, our goal in this paper is to convince the reader that the generalized $Q$-polynomial property is worth investigating. To this end,
 we will give a detailed description of 
 one attractive example, associated with the projective geometry $L_N(q)$. As we will see, it is natural to view $L_N(q)$ as a $q$-analog of the $N$-cube $Q_N$.
 \medskip
 
 \noindent 
  Given a finite field  ${\rm GF}(q)$ and an integer $N\geq 1$, the graph $L_N(q)$ is defined as follows.
  Start with a vector space $\mathbb V$ over ${\rm GF}(q)$  that has dimension $N$. The vertex set of $L_N(q)$ consists of the subspaces of $\mathbb V$.
  Vertices $y, z$ of $L_N(q)$ are adjacent whenever one of $y,z$ contains the other, and their dimensions differ by one. The graph $L_N(q)$ is not distance-regular
  if $N\geq 2$.  \medskip
  
  \noindent
  Background information about $L_N(q)$ can be found in 
  \cite[Section~9.3]{bcn},\cite[Chapter~1]{cameron}, \cite{murali}, \cite{murali2}, \cite[Example~3.1(5) with $M=N$]{uniform},  \cite[Section~7]{introLP}.
  \medskip

  \noindent Let $\bf 0$ denote the zero subspace of $\mathbb V$. We distinquish the vertex $\bf 0$ of $L_N(q)$.
  We will work with the weighted adjacency matrix $A$ of $L_N(q)$ introduced by S. Ghosh and M. Srinivasan in their excellent paper
\cite{murali}; see Definition 5.1 below.
   We define a diagonal matrix $A^*$ such that for each vertex $y$ of $L_N(q)$, the
  $(y,y)$-entry of $A^*$ is $q^{-{\rm dim}\,y}$. We remark that $A^*$ is a scalar multiple of the matrix $K$ that appears in
 \cite[Section~7]{introLP}. By construction, the eigenspaces of $A^*$ are the subconstituents of $L_N(q)$ with respect to $\bf 0$.
  By \cite[Section~1]{murali} the matrix $A$ is diagonalizable over $\mathbb R$.
  The eigenvalues and eigenspaces of $A$ are computed in \cite[Section~2]{murali}; see Lemma \ref{lem:Mur} below. 
 We will show that $A^*$ acts on the eigenspaces of $A$ in a (block) tridiagonal fashion. This will imply that $A$ is $Q$-polynomial
  with respect to $\bf 0$, in the sense of Definition \ref{def:Qx} below. This fact is the main result of the paper.
  \medskip
  
  \noindent We just described our main result. We also obtain the following auxiliary results, which may be of independent interest.
  We display four bases for the standard module of $L_N(q)$, said to be split.
  With respect to each split basis, one of $A, A^*$ acts in an upper triangular fashion, and the other acts in a lower triangular fashion.
  We compute the actions of $A$ and $A^*$ on each split basis.
  For the vertex $\bf 0$  the associated subconstituent algebra $T$ is generated by $A, A^*$.
  We show that $A, A^*$ satisfy the tridiagonal relations. We show that the standard module of $L_N(q)$ decomposes into a direct sum of irreducible $T$-modules. 
  We describe the irreducible $T$-modules in terms of Leonard systems of dual $q$-Krawtchouk type.
  \medskip

 \noindent We mentioned above that the weighted adjacency matrix $A$ of $L_N(q)$ was introduced in \cite{murali}. We would like
 to acknowledge that an affine transformation $\alpha A+\beta I$ appears in the work of 
 Bernard, Cramp{\'e}, Vinet \cite[Theorem~7.1]{PAB} concerning the subconstituent algebra of the symplectic dual polar graph. The spectrum of $A$
 can be deduced from \cite[Section~7.2]{PAB}, and Leonard systems of dual $q$-Krawtchouk type are alluded to in \cite[line (129)]{PAB}.
 \medskip

 \noindent This paper is organized as follows. 
 In Section 2 we discuss the generalized $Q$-polynomial property. 
 In Section 3 we define the projective geometry $L_N(q)$, and discuss its basic properties.
 In Section 4 we introduce the four split bases for the standard module of $L_N(q)$.
 In Section 5 we define the matrices $A$, $A^*$ and discuss their basic properties.
 In Sections 6, 7 we compute the actions of $A$ and $A^*$ on the split bases, and consider the implications.
 In Section 8 we show that the matrix $A$ is $Q$-polynomial with respect to $\bf 0$. We also show that $A, A^*$ satisfy the
 tridiagonal relations, and we describe the irreducible $T$-modules.

 \section{The $Q$-polynomial property}
 
 For distance-regular graphs the $Q$-polynomial property is well known \cite{bannai, bbit, bcn, dkt, go,terwSub1,terwSub2, terwSub3, int}.
 In \cite[Section~20]{int} we generalize this $Q$-polynomial property to a setting that involves  a weighted adjacency matrix of a graph that is not necessarily distance-regular. In the present section we review the generalized $Q$-polynomial property. 
 Our review involves  the adjacency algebra, the dual adjacency algebra,  and the subconstituent
algebra. 

\medskip
\noindent 
Let $\mathbb R$ denote the field of real numbers. 
Let $X$ denote a nonempty  finite  set.
Let ${\rm Mat}_X(\mathbb R)$
denote the $\mathbb R$-algebra
consisting of the matrices with rows and columns  indexed by $X$
and all entries in $\mathbb R  $. Let $I \in{ \rm Mat}_X(\mathbb R)$ denote the identity matrix.
 Let
$V=\mathbb R^X$ denote the vector space over $\mathbb R$
consisting of the column vectors with
coordinates indexed by $X$ and all entries 
in $\mathbb R$.
The algebra ${\rm Mat}_X(\mathbb R)$
acts on $V$ by left multiplication.
We call $V$ the {\it standard module}.
For all $y \in X,$ define a vector $\hat{y} \in V$ that has $y$-coordinate  $1$ and all other coordinates $0$.
The vectors $\lbrace \hat y \rbrace_{y \in X}$ form a basis for $V$. 
\medskip

\noindent
Let $\Gamma = (X, \mathcal R)$ denote a finite, undirected, connected graph,
without loops or multiple edges, with vertex set $X$,
edge set
$\mathcal R$, and path-length distance function $\partial$.
 Vertices $y,z \in X$ are said to be {\it adjacent} whenever they form an edge. For $y \in X$ and an integer $i\geq 0$ define
the set  $\Gamma_i(y) = \lbrace z \in X \vert \partial(y,z)=i\rbrace$. We abbreviate $\Gamma(y)= \Gamma_1(y)$.
The graph $\Gamma$ is said to be {\it bipartite} whenever there exists a partition $X= X^+ \cup X^{-}$ such that 
$\Gamma(y) \subseteq X^- $  for all $y \in X^+$, and
$\Gamma(y) \subseteq X^+ $ for all $y \in X^-$.
\medskip

\begin{definition} \label{def:wa} \rm By a {\it weighted adjacency matrix of $\Gamma$}, we mean a matrix
 $A \in {\rm Mat}_X(\mathbb R)$ that has 
$(y,z)$-entry
\begin{align*}
A_{y,z} = \begin{cases}  
\not=0, & {\mbox{\rm if $y, z$ are adjacent}};\\
0, & {\mbox{\rm if $y, z$ are not adjacent}}
\end{cases}
 \qquad (y,z \in X).
\end{align*}
\end{definition}
\noindent 
 For the rest of this section, we fix a weighted adjacency matrix $A$ of $\Gamma$ 
that is diagonalizable over $\mathbb R$.
\medskip

\noindent Next we discuss the adjacency algebra of $\Gamma$.
Let $M$ denote the subalgebra of ${\rm Mat}_X(\mathbb R)$ generated by $A$. We call $M$
 the {\it adjacency algebra of $\Gamma$ generated by $A$}.
 Let $\mathcal D+1$ denote the dimension of the vector space $M$.
Since $A$ is diagonalizable, the vector space $M$ has a  basis 
$\lbrace E_i\rbrace_{i=0}^{\mathcal D}$ such that  $\sum_{i=0}^{\mathcal D} E_i=I$
and $E_iE_j =\delta_{i,j}E_i $ for $0 \leq i,j \leq {\mathcal D}$.
We call $\lbrace E_i\rbrace_{i=0}^{\mathcal D}$  the {\it primitive idempotents
of $A$}. 
Since $A \in M$, there exist real numbers $\lbrace \theta_i \rbrace_{i=0}^{\mathcal D}$ such that 
$
A = \sum_{i=0}^{\mathcal D} \theta_i E_i$.
The scalars $\lbrace \theta_i \rbrace_{i=0}^{\mathcal D}$ are mutually distinct since $A$ generates $M$. 
We have $AE_i = \theta_i E_i = E_i A$ for $0 \leq i \leq {\mathcal D}$. Note that
\begin{align*}
V = \sum_{i=0}^{\mathcal D} E_iV \qquad \qquad (\mbox{\rm direct sum}).
\end{align*}
For $0 \leq i \leq {\mathcal D}$ the subspace $E_iV$ is an eigenspace of $A$, and  $\theta_i$ is the corresponding eigenvalue.
For notational convenience, define $E_{-1}=0$ and $E_{\mathcal D+1}=0$.
\medskip

\noindent Next we discuss the dual adjacency algebras of $\Gamma$. 
For the rest of this section, fix
a vertex $x \in X$. Define the integer $D=D(x)$ by
\begin{align*}
D = {\rm max}\lbrace \partial(x,y)\vert y \in X\rbrace.
\end{align*}
\noindent We call $D$ the {\it diameter of $\Gamma$ with respect to $x$}. We have $D\leq \mathcal D$, because
the matrices $\lbrace A^i \rbrace_{i=0}^D$ are linearly independent.
For 
$ 0 \leq i \leq D$ we define a diagonal matrix $E_i^*=E_i^*(x)$  in 
 ${\rm Mat}_X(\mathbb R)$
 that has $(y,y)$-entry
\begin{equation}\label{DEFDEI}
(E_i^*)_{y,y} = \begin{cases} 1, & \mbox{\rm if $\partial(x,y)=i$};\\
0, & \mbox{\rm if $\partial(x,y) \ne i$}
\end{cases}
 \qquad (y \in X).
\end{equation}
We call $\lbrace E_i^*\rbrace_{i=0}^D$ the {\it dual primitive idempotents of $\Gamma$
 with respect to $x$} \cite[p.~378]{terwSub1}. 
We have $\sum_{i=0}^D E_i^*=I$ and  $E_i^*E_j^* = \delta_{i,j}E_i^* $ for $0 \leq i,j\leq D$.
Consequently the matrices
$\lbrace E_i^*\rbrace_{i=0}^{D}$ form a 
basis for a commutative subalgebra
$M^*=M^*(x)$ of 
${\rm Mat}_X(\mathbb R)$.
We call 
$M^*$ the {\it dual adjacency  algebra of
$\Gamma$ with respect to $x$} \cite[p.~378]{terwSub1}.
\medskip



\noindent 
Next we recall the subconstituents of $\Gamma $ with respect to $x$.
From
\eqref{DEFDEI} we obtain
\begin{equation}\label{DEIV}
E_i^*V = \mbox{\rm Span}\lbrace \hat{y} \vert y \in \Gamma_i(x)\rbrace
\qquad \qquad (0 \leq i \leq D).
\end{equation}
By 
\eqref{DEIV}  and since $\lbrace {\hat y} \rbrace_{y \in X}$
is an  basis for $V$,
 we find
\begin{align*}
V = \sum_{i=0}^D E^*_iV \qquad \qquad (\mbox{\rm direct sum}). 
\end{align*}
For $0 \leq i \leq D$ the subspace $E^*_iV$ is a common eigenspace for $M^*$. 
We call $E_i^*V$ the $i^{th}$ {\it subconstituent of $\Gamma$
with respect to $x$}. 
For notational convenience, define  $E^*_{-1}=0$ and $E^*_{D+1}=0$. 
By the triangle inequality, for adjacent $y,z \in X$ the distances $\partial(x,y)$ and $\partial(x,z) $ differ by at most one.
Consequently 
\begin{align}
A E^*_iV \subseteq E^*_{i-1}V + E^*_iV + E^*_{i+1}V \qquad \qquad (0 \leq i \leq D).
\label{eq:IntroAact}
\end{align}

\noindent Next we discuss the $Q$-polynomial property.

\begin{definition}\label{def:dualA} {\rm (See \cite[Definition~20.6]{int}.)} \rm A matrix $A^* \in {\rm Mat}_X(\mathbb R)$ is called a  {\it dual adjacency matrix of $\Gamma$}  (with respect to $x$ and the ordering $\lbrace E_i \rbrace_{i=0}^{\mathcal D}$)
whenever $A^*$ generates $M^*$ and
\begin{align}
A^* E_iV \subseteq E_{i-1}V+ E_iV+E_{i+1}V \qquad \qquad (0 \leq i \leq \mathcal D).
\label{eq:IntroAsact}
\end{align}
\end{definition}

\begin{definition}\label{def:Qpoly}  {\rm (See \cite[Definition~20.7]{int}.)} \rm We say that the ordering $\lbrace E_i \rbrace_{i=0}^{\mathcal D}$ is {\it $Q$-polynomial with respect to $x$}
whenever there exists a dual adjacency matrix of $\Gamma$ with respect to $x$ and $\lbrace E_i \rbrace_{i=0}^{\mathcal D}$.
\end{definition}

\begin{definition}\rm \label{def:Qx}  {\rm (See \cite[Definition~20.8]{int}.)} \rm We say that $A$ is {\it $Q$-polynomial with respect to $x$} whenever there exists an ordering of the primitive idempotents of $A$
that is $Q$-polynomial with respect to $x$.
\end{definition}

\noindent Assume that $\Gamma$ has a dual adjacency matrix $A^*$ with respect to $x$ and $\lbrace E_i \rbrace_{i=0}^\mathcal D$. Since $A^* \in M^*$, there
exist real numbers $\lbrace \theta^*_i\rbrace_{i=0}^D$ such that 
 $A^* = \sum_{i=0}^D \theta^*_i E^*_i$.
The scalars $\lbrace \theta^*_i \rbrace_{i=0}^D$ are mutually distinct since $A^*$ generates $M^*$. We have
$A^* E^*_i = \theta^*_i E^*_i = E^*_i A^* $ for $0 \leq i \leq D$. We mentioned earlier that the sum
$V = \sum_{i=0}^D E^*_iV$ is direct.
For $0 \leq i \leq D$ the subspace $E^*_iV$ is an eigenspace of $A^*$,
and $\theta^*_i$ is the corresponding eigenvalue.
\medskip

\noindent  As we investigate the $Q$-polynomial property, it is helpful to bring in the subconstituent algebra \cite{terwSub1,
terwSub2,
terwSub3}. The following
definition is a variation on  
  \cite[Definition 3.3]{terwSub1}.

\begin{definition} \label{def:Talg} \rm 
Let $T=T(x, A)$ denote the subalgebra of ${\rm Mat}_X(\mathbb R)$ generated by 
$M$ and $M^*$.  We call $T$ the {\it subconstituent algebra of $\Gamma$ 
 with respect to $x$ and $A$}.
 \end{definition}
 
 \noindent By construction, the algebra $T$ has finite dimension. 

\begin{lemma} \label{lem:TAA} Assume that $\Gamma$ has a dual adjacency matrix $A^*$ with respect to $x$ and $\lbrace E_i \rbrace_{i=0}^{\mathcal D}$.
 Then
the algebra $T$ is generated by $A, A^*$.
 \end{lemma}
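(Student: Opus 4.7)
The plan is to observe that this lemma is essentially a direct unpacking of the definitions, once the dual adjacency matrix hypothesis is in hand. Let $T' \subseteq {\rm Mat}_X(\mathbb R)$ denote the subalgebra generated by $A$ and $A^*$; the goal is to show $T = T'$.

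For the inclusion $T' \subseteq T$, I would note that $A \in M \subseteq T$ and $A^* \in M^* \subseteq T$ by Definition \ref{def:Talg}, so any subalgebra containing $\{A, A^*\}$ sits inside $T$.

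For the reverse inclusion $T \subseteq T'$, the key point is that each of the two generating subalgebras of $T$ is already contained in $T'$. Indeed, $M$ is by definition the subalgebra generated by $A$ alone, so $M \subseteq T'$. On the dual side, the assumption that $A^*$ is a dual adjacency matrix of $\Gamma$ with respect to $x$ and $\{E_i\}_{i=0}^{\mathcal D}$ carries, via Definition \ref{def:dualA}, the condition that $A^*$ generates $M^*$; thus $M^* \subseteq T'$. Since $T$ is by Definition \ref{def:Talg} the subalgebra generated by $M \cup M^*$, we obtain $T \subseteq T'$.

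There is no real obstacle here: both inclusions are forced by the definitions of $M$, $M^*$, $T$, and the defining property of a dual adjacency matrix. The only thing worth underlining in the write-up is precisely where the hypothesis enters, namely in the clause ``$A^*$ generates $M^*$'' built into Definition \ref{def:dualA}; without that clause one would only get $A^* \in M^*$, which would not suffice.
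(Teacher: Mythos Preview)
Your proof is correct and follows the same approach as the paper's own proof, which simply notes that $T$ is generated by $M$ and $M^*$, that $M$ is generated by $A$, and that $M^*$ is generated by $A^*$. You have expanded this into an explicit double-inclusion argument, but the logical content is identical.
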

 \begin{proof} The algebra $T$ is generated by $M$ and $M^*$. The algebra $M$ is generated by $A$, and the algebra $M^*$ is 
 generated by $A^*$.
 \end{proof}
 
 \noindent Next we give some relations in $T$.
 
 \begin{lemma} \label{lem:Trel} We have $E^*_i  A E^*_j = 0$ if $\vert i - j \vert > 1$ $(0 \leq i,j\leq D)$. Assume that $\Gamma$ has a dual  adjacency matrix 
 $A^*$ with respect to $x$ and $\lbrace E_i \rbrace_{i=0}^{\mathcal D}$. Then
 $E_i A^* E_j = 0$ if $\vert i - j \vert > 1$ $(0 \leq i,j\leq \mathcal D)$.
 \end{lemma}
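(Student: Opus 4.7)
The plan is to deduce each of the two vanishing statements directly from the corresponding tridiagonal containment, namely \eqref{eq:IntroAact} for the first assertion and \eqref{eq:IntroAsact} for the second, combined with the orthogonality relations among the respective primitive idempotents.

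For the first claim, I would fix $i,j$ with $|i-j|>1$ and pick an arbitrary vector $v\in V$. Note that $E^*_j v\in E^*_jV$, so by \eqref{eq:IntroAact} we have
\begin{equation*}
A E^*_j v\in E^*_{j-1}V + E^*_jV + E^*_{j+1}V.
\end{equation*}
Apply $E^*_i$ on the left. Since $E^*_iE^*_k=\delta_{i,k}E^*_i$ and since $i\notin\{j-1,j,j+1\}$, the operator $E^*_i$ kills each summand on the right. Hence $E^*_iAE^*_j v=0$. Because $v\in V$ was arbitrary, we conclude $E^*_iAE^*_j=0$.

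For the second claim, the argument is entirely parallel, using $A^*$ in place of $A$, the primitive idempotents $E_k$ in place of $E^*_k$, and \eqref{eq:IntroAsact} in place of \eqref{eq:IntroAact}. Fix $i,j$ with $|i-j|>1$ and $v\in V$. Then $E_j v\in E_jV$, so
\begin{equation*}
A^* E_j v\in E_{j-1}V + E_jV + E_{j+1}V.
\end{equation*}
Multiplying on the left by $E_i$ and using $E_iE_k=\delta_{i,k}E_i$ together with $i\notin\{j-1,j,j+1\}$, we obtain $E_iA^*E_j v=0$, and hence $E_iA^*E_j=0$.

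There is no real obstacle here; the proof is a formal consequence of the two tridiagonal containments and the idempotent relations already established above. The only thing to be careful about is the direction of containment: the first statement uses the intrinsic tridiagonal action of $A$ on the subconstituents (which comes from the triangle inequality, independent of any $Q$-polynomial hypothesis), while the second statement requires the dual adjacency matrix hypothesis in order to invoke \eqref{eq:IntroAsact}.
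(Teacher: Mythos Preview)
Your proof is correct and follows exactly the approach the paper intends: the paper's own proof is the one-line remark that the result is a routine consequence of \eqref{eq:IntroAact} and \eqref{eq:IntroAsact}, and you have simply spelled out that routine argument using the idempotent orthogonality relations. Your closing observation about which hypothesis is needed for which assertion is also accurate.
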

 \begin{proof} This is a routine consequence of
 \eqref{eq:IntroAact} and \eqref{eq:IntroAsact}.
 \end{proof}

\medskip
\noindent Next we consider the $T$-modules.
By a {\it T-module}
we mean a subspace $W \subseteq V$ such that $BW \subseteq W$
for all $B \in T$. A $T$-module $W$ is called {\it irreducible} whenever $W\not=0$ and $W$ does not contain a $T$-module besides 0 and $W$.
\medskip

\noindent For the rest of this section, we assume that $\Gamma$ has a dual  adjacency matrix 
 $A^*$ with respect to $x$ and $\lbrace E_i \rbrace_{i=0}^{\mathcal D}$.
Let $W$ denote an irreducible $T$-module.
Then $W$ is a direct sum of the nonzero subspaces among $\lbrace E^*_iW\rbrace_{i=0}^D$. Similarly,
$W$ is a direct sum 
 of the nonzero subspaces among $\lbrace E_iW\rbrace_{i=0}^{\mathcal D}$. 
 
 \begin{lemma} \label{lem:WTT} Let $W$ denote an irreducible $T$-module. Then 
\begin{align*}
&A E^*_iW \subseteq E^*_{i-1}W + E^*_iW + E^*_{i+1}W \qquad \qquad (0 \leq i \leq D),
\\
&A^* E_i W \subseteq E_{i-1}W + E_iW+E_{i+1}W \qquad \qquad (0 \leq i \leq {\mathcal D}).
\end{align*}
\end{lemma}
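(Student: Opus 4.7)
The plan is to prove both inclusions by the same mechanism: use the resolution of the identity in one of the two spectral decompositions, kill off the terms forbidden by Lemma \ref{lem:Trel}, and then use the $T$-invariance of $W$ to conclude that the surviving terms lie in $W$ projected by the corresponding idempotents. The two inclusions are formally dual, so I would write out the first and note that the second follows by the identical argument with the roles of $A,E_i,\mathcal D$ and $A^*,E^*_i,D$ interchanged.

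For the first inclusion, fix $i$ with $0\le i\le D$ and let $w\in E^*_iW$, so that $E^*_iw=w$. Since $A\in M\subseteq T$ and $W$ is a $T$-module, $Aw\in W$. Expanding using $I=\sum_{j=0}^{D}E^*_j$ gives
\[
Aw \;=\; \sum_{j=0}^{D} E^*_j A w \;=\; \sum_{j=0}^{D} E^*_j A E^*_i w.
\]
By Lemma \ref{lem:Trel}, $E^*_j A E^*_i=0$ whenever $|j-i|>1$, so
\[
Aw \;=\; E^*_{i-1}A E^*_i w + E^*_i A E^*_i w + E^*_{i+1}A E^*_i w,
\]
and the boundary cases $i=0$ and $i=D$ are handled by the conventions $E^*_{-1}=0$ and $E^*_{D+1}=0$. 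Each $E^*_j\in M^*\subseteq T$, so $T$-invariance of $W$ forces $E^*_j Aw\in E^*_jW$. This yields $Aw\in E^*_{i-1}W+E^*_iW+E^*_{i+1}W$, proving the first inclusion.

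The second inclusion is obtained by the same argument applied to $A^*$ in place of $A$: since we are assuming that $\Gamma$ has a dual adjacency matrix $A^*$ with respect to $x$ and $\{E_i\}_{i=0}^{\mathcal D}$, Lemma \ref{lem:Trel} supplies the relation $E_j A^* E_i=0$ for $|j-i|>1$. Writing $I=\sum_{j=0}^{\mathcal D}E_j$ and applying this vanishing to $A^*w$ for $w\in E_iW$, together with the fact that each $E_j\in M\subseteq T$ preserves $W$, gives the tridiagonal inclusion for $A^*$.

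There is no real obstacle here; the entire content of the lemma is that $T$-invariance of $W$ lets one restrict the known (block) tridiagonal actions of $A$ on $\{E^*_iV\}$ and of $A^*$ on $\{E_iV\}$ from $V$ down to $W$. The one subtlety to flag is that the argument uses $E^*_j\in T$ and $E_j\in T$ so that the components $E^*_jAw$ and $E_jA^*w$ remain inside $W$; this is immediate from $M,M^*\subseteq T$.
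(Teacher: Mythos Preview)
Your proof is correct and is essentially the same argument the paper has in mind: the paper's one-line proof cites \eqref{eq:IntroAact} and \eqref{eq:IntroAsact} directly, while you invoke their reformulation as the vanishing relations in Lemma~\ref{lem:Trel}, but the mechanism---tridiagonal action on $V$ plus $T$-invariance of $W$---is identical. Your version simply makes explicit the step that $E^*_j Aw \in E^*_jW$ (respectively $E_j A^*w \in E_jW$) because $Aw\in W$ and $E^*_j\in T$, which the paper leaves implicit.
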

\begin{proof} By \eqref{eq:IntroAact} and \eqref{eq:IntroAsact}.
\end{proof}
 
 \noindent Let $W$ denote an irreducible $T$-module.
By the {\it endpoint} of $W$ we mean
$\mbox{min}\lbrace i \vert 0\leq i \leq D, \; E^*_iW\not=0\rbrace $.
By the {\it diameter} of $W$ we mean
$ |\lbrace i \vert 0 \leq i \leq D,\; E^*_iW\not=0 \rbrace |-1 $.
By the {\it dual endpoint} of $W$ we mean
$\mbox{min}\lbrace i \vert 0\leq i \leq {\mathcal D}, \; E_iW\not=0\rbrace $.
By
the {\it dual diameter} of $W$ we mean
$ |\lbrace i \vert 0 \leq i \leq {\mathcal D},\; E_iW\not=0 \rbrace |-1 $.
\medskip

\noindent The following result is a variation on \cite[Lemma 3.4, Lemma 3.9]{terwSub1}.
\begin{lemma} \label{lem:Wfacts}
Let $W$ denote an irreducible $T$-module with endpoint $r$ and diameter $d$.
Then $r,d$ are nonnegative integers such that $r+d\leq D$.
Moreover the following {\rm (i), (ii)}  hold:
\begin{enumerate}
\item[\rm (i)] 
$E^*_iW \not=0$ if and only if $r \leq i \leq r+d$
$ \quad (0 \leq i \leq D)$;
\item[\rm (ii)]
$W = \sum_{i=r}^{r+d} E^*_{i}W \qquad (\mbox{direct sum}). $
\end{enumerate}
\end{lemma}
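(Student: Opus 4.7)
The plan is to show that the index set $S := \{i : 0 \leq i \leq D,\ E^*_iW \neq 0\}$ is a contiguous block of integers. Granting this, the lemma will reduce to bookkeeping: by definition $r = \min S$ and $|S| = d+1$, so $S = \{r, r+1, \ldots, r+d\}$, which is (i); the inclusion $S \subseteq \{0, 1, \ldots, D\}$ forces $r + d \leq D$; and (ii) follows from the decomposition $W = \bigoplus_{i=0}^D E^*_i W$ noted in the paragraph preceding Lemma~\ref{lem:WTT}. The set $S$ is nonempty because $W \neq 0$, so $r$ and $d$ are well-defined nonnegative integers.

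To prove contiguity I would argue by contradiction. Suppose some $b$ satisfies $r < b < \max S$ and $E^*_b W = 0$. Set $W_1 := \sum_{i=0}^{b-1} E^*_i W$ and $W_2 := \sum_{i=b+1}^{D} E^*_i W$. Since $E^*_b W = 0$, the decomposition $W = \bigoplus_i E^*_i W$ yields $W = W_1 \oplus W_2$ with both summands nonzero (and hence proper). Each summand is $M^*$-invariant, being a sum of joint eigenspaces of $M^*$. For the $A$-invariance of $W_1$, I would apply the first inclusion of Lemma~\ref{lem:WTT}: for $0 \leq i \leq b-1$ we have $A E^*_i W \subseteq E^*_{i-1}W + E^*_i W + E^*_{i+1} W$, and at the boundary $i = b-1$ the rightmost term $E^*_b W$ vanishes by hypothesis, so the image lies in $W_1$. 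The same reasoning handles $W_2$. Since $T$ is generated by $M^*$ and $A$, this exhibits $W_1$ and $W_2$ as proper nonzero $T$-submodules of $W$, contradicting irreducibility.

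The one step requiring care is the $A$-invariance check for the candidate submodules, which hinges on the tridiagonal action of Lemma~\ref{lem:WTT} combined with the vanishing of the critical boundary term $E^*_b W$. The rest is formal bookkeeping from the definitions of endpoint and diameter, and I do not anticipate any further obstacle.
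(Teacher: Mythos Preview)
Your proof is correct and follows essentially the same approach as the paper: both arguments locate a gap $b$ in the support, build the submodule $W_1=\sum_{i<b}E^*_iW$, and use the tridiagonal action of $A$ from Lemma~\ref{lem:WTT} together with $E^*_bW=0$ to obtain a nonzero proper $T$-submodule, contradicting irreducibility. The only cosmetic difference is that the paper exhibits just the one submodule $W'$ and reaches the contradiction via $W'=W$, whereas you also form $W_2$ to witness properness directly.
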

\begin{proof} (i) By construction $E^*_r W\not=0$ and $E^*_iW=0$ for $0 \leq i < r$.
Suppose there exists an integer $i$ $(r < i \leq r+d)$ such that $E^*_iW=0$. Define $W'=E^*_r W + E^*_{r+1} W + \cdots + E^*_{i-1}W$.
By construction $0 \not= W' \subseteq  W$. Also by construction, $A^* W'\subseteq W'$. By Lemma \ref{lem:WTT} and $E^*_iW=0$ we obtain
$A W' \subseteq W'$. By these comments $W'$ is a $T$-module. We have $W=W'$ since the $T$-module $W$ is irreducible. This contradicts the
fact that $d$ is the diameter of $W$. We conclude that $E^*_iW\not=0$ for $r \leq i \leq r+d$. By the definition of the diameter $d$ we have
 $E^*_iW=0$ for $r+d < i \leq D$.
 \\
 \noindent (ii) By (i) and the comments above Lemma  \ref{lem:WTT}.
 \end{proof}
\noindent The following is a variation on \cite[Lemma 3.4, Lemma 3.12]{terwSub1}.

\begin{lemma} \label{lem:Wfacts2}
Let $W$ denote an irreducible $T$-module with 
dual endpoint $t$ and dual diameter $\delta$.
Then $t, \delta $ are nonnegative integers such that $t+\delta \leq {\mathcal D}$. Moreover the following {\rm (i), (ii)} hold:
\begin{enumerate}
\item[\rm (i)] 
$E_iW \not=0$ if and only if $t \leq i \leq t+\delta$
$ \quad (0 \leq i \leq {\mathcal D})$;
\item[\rm (ii)]
$W = \sum_{i=t}^{t+\delta} E_{i}W \qquad (\mbox{direct sum}). $
\end{enumerate}
\end{lemma}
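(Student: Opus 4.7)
The plan is to mirror the proof of Lemma~\ref{lem:Wfacts}, swapping the roles of $A$ with $A^*$, of $E_i$ with $E^*_i$, and of $D$ with $\mathcal D$. The dual-side hypotheses needed are already available: by Lemma~\ref{lem:TAA} the algebra $T$ is generated by $A$ and $A^*$, by Lemma~\ref{lem:WTT} we have the tridiagonal action $A^* E_iW \subseteq E_{i-1}W + E_iW + E_{i+1}W$, and each $E_iW$ is an eigenspace of $A$ with eigenvalue $\theta_i$ (in particular $AE_iW\subseteq E_iW$). The bound $t+\delta \leq \mathcal D$ follows from $t,\delta\geq 0$ together with $E_iW=0$ for $i>\mathcal D$.

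For part (i), I would argue as follows. By the definition of the dual endpoint $t$, we have $E_tW\neq 0$ and $E_iW=0$ for $0\leq i<t$. Suppose for contradiction that there exists $i$ with $t<i\leq t+\delta$ and $E_iW=0$. Set
\[
W' \;=\; E_tW + E_{t+1}W + \cdots + E_{i-1}W.
\]
Then $W'$ is nonzero (since $E_tW\neq 0$) and $W'\subseteq W$. Because $AE_jW\subseteq E_jW$ for each $j$, we have $AW'\subseteq W'$. Because $A^*E_jW\subseteq E_{j-1}W+E_jW+E_{j+1}W$ and $E_iW=0$, each summand of $A^*W'$ lies in $W'$, so $A^*W'\subseteq W'$. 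By Lemma~\ref{lem:TAA}, $W'$ is a $T$-submodule of $W$. Irreducibility of $W$ forces $W'=W$, which contradicts the fact that $\delta$ is the dual diameter (since some $E_jW$ with $j\geq i$ would then have to vanish, making the set $\{j : E_jW\neq 0\}$ smaller than $\delta+1$). Hence $E_iW\neq 0$ for all $t\leq i\leq t+\delta$, and by definition of $\delta$ we have $E_iW=0$ for $t+\delta<i\leq\mathcal D$. Part (ii) is then immediate from (i) together with the decomposition $W=\sum_{i=0}^{\mathcal D} E_iW$ (direct sum), which was recorded just before Lemma~\ref{lem:WTT}.

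I do not expect any real obstacle: the argument is formally dual to the proof of Lemma~\ref{lem:Wfacts}, and every ingredient is already in place. The only point that requires a moment's care is the verification $AW'\subseteq W'$, which is actually easier than its counterpart in Lemma~\ref{lem:Wfacts} (where $A^*W'\subseteq W'$ followed just from $A^*$ being diagonal), because here $A$ acts as a scalar on each $E_jW$, so closure is automatic. The tridiagonal inclusion is needed only for $A^*$, matching the situation in the original lemma where it was needed only for $A$.
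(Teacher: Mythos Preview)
Your proposal is correct and is exactly the approach the paper intends: its proof reads simply ``Similar to the proof of Lemma~\ref{lem:Wfacts},'' and the dualization you spell out---swapping $E_i$ for $E^*_i$, using $A E_jW\subseteq E_jW$ for closure under $A$ and the tridiagonal action from Lemma~\ref{lem:WTT} for closure under $A^*$---is precisely that argument. One small organizational remark: the inequality $t+\delta\le\mathcal D$ is most cleanly stated as a consequence of (i) rather than before it, since without consecutiveness of the nonzero $E_iW$ you only know $\delta\le\mathcal D$, not $t+\delta\le\mathcal D$.
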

\begin{proof} Similar to the proof of Lemma \ref{lem:Wfacts}.
\end{proof}

\noindent  The definition of a tridiagonal pair is given in \cite[Definiton~1.1]{itt}.
The following is a variation on  \cite[Example~1.4]{itt}.


\begin{proposition} \label{prop:TDpair} The pair $A, A^*$ acts on each irreducible $T$-module as a tridiagonal pair.
\end{proposition}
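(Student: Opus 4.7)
The plan is to verify, for each irreducible $T$-module $W$, the four axioms defining a tridiagonal pair as given in \cite[Definition~1.1]{itt}. The module $W$ is nonzero by definition of irreducibility and finite-dimensional inside $V$, and $A, A^*$ preserve $W$ since both lie in $T$. For the diagonalizability axiom, since $\sum_i E_i = I$ and each $E_i \in T$, we have $W = \sum_i E_iW$, and by Lemma~\ref{lem:Wfacts2}(ii) this sum is direct with nonzero summands exactly $\{E_jW : t \le j \le t+\delta\}$, where $t, \delta$ are the dual endpoint and dual diameter of $W$. Each nonzero $E_jW$ lies in the $\theta_j$-eigenspace of $A$, and since the $\theta_j$ are mutually distinct these are the eigenspaces of $A|_W$, which is therefore diagonalizable. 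The argument for $A^*|_W$ is identical, with Lemma~\ref{lem:Wfacts} taking the role of Lemma~\ref{lem:Wfacts2} and the eigenspaces indexed by $\{E_i^*W : r \le i \le r+d\}$.

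For axioms (ii) and (iii), I would order the eigenspaces of $A|_W$ as $V_j = E_{t+j}W$ for $0 \le j \le \delta$, and the eigenspaces of $A^*|_W$ as $V_i^* = E_{r+i}^*W$ for $0 \le i \le d$. Lemma~\ref{lem:WTT} then gives directly
\[
A^* V_j \subseteq V_{j-1} + V_j + V_{j+1}, \qquad A V_i^* \subseteq V_{i-1}^* + V_i^* + V_{i+1}^*,
\]
where the boundary vanishing $E_{t-1}W = E_{t+\delta+1}W = 0$ and $E_{r-1}^*W = E_{r+d+1}^*W = 0$ is supplied by parts~(i) of Lemmas~\ref{lem:Wfacts} and~\ref{lem:Wfacts2}.

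For the irreducibility axiom (iv), I would invoke Lemma~\ref{lem:TAA}: under the standing hypothesis that $\Gamma$ has a dual adjacency matrix with respect to $x$, the algebra $T$ is generated by $A$ and $A^*$. Hence any subspace of $W$ stable under both $A$ and $A^*$ is automatically $T$-invariant, and irreducibility of $W$ as a $T$-module forces it to be $0$ or $W$.

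The argument is essentially a dictionary translation between the $T$-module framework and the tridiagonal pair axioms, so I do not anticipate a genuine obstacle. The only delicate point is arranging the eigenspaces of $A|_W$ and $A^*|_W$ in an order for which the boundary terms vanish correctly; the consecutive-index structure guaranteed by Lemmas~\ref{lem:Wfacts}(i) and~\ref{lem:Wfacts2}(i) makes this step routine rather than technical.
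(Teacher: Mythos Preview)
Your proposal is correct and follows exactly the approach the paper takes: the paper's proof simply cites Lemmas~\ref{lem:TAA}, \ref{lem:WTT}, \ref{lem:Wfacts}, \ref{lem:Wfacts2}, and you have unpacked precisely how those lemmas verify the four axioms of \cite[Definition~1.1]{itt}.
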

\begin{proof} By  Lemmas \ref{lem:TAA}, \ref{lem:WTT}, \ref{lem:Wfacts}, \ref{lem:Wfacts2}.
\end{proof}

\noindent Let $W$ denote an irreducible $T$-module. 
By Proposition \ref{prop:TDpair} and \cite[Lemma~4.5]{itt},
 the diameter of $W$ is equal to the dual diameter of $W$.
\medskip

\noindent There is a considerable literature about  tridiagonal pairs; see \cite{bbit, itoOq, augIto, IKT, itt, tdanduq, nomTB, ter2, qSerre, aa} and the references therein.
 This literature motivates us to
investigate 
 the $Q$-polynomial property described in Definition \ref{def:Qx}.
  To begin the investigation, we will 
 examine one example in detail.
 This example is constructed from the projective geometry $L_N(q)$.
\medskip


 \section{The projective geometry $L_N(q)$}
 In Section 1 we defined the graph $L_N(q)$. As we will explain in a moment, it is natural to view this graph as the Hasse diagram of a partially ordered set (poset). In order to distinguish between
 the graph and the poset, we will use the following notation going forward.
 \medskip
 
 \noindent
  Given a finite field  ${\rm GF}(q)$ and an integer $N\geq 1$, we define a poset $L_N(q)$ as follows.
  Let $\mathbb V$ denote a vector space over ${\rm GF}(q)$  that has dimension $N$. Let the set $X$ consist of the subspaces of $\mathbb V$. The set $X$, together with the containment relation,
  is a poset denoted $L_N(q)$ and called a projective geometry.
   The partial order is denoted $\leq$. For $y,z \in X$ we write $y < z$ whenever $y \leq z$ and $y \not=z$. 
  We say that {\it $z$ covers $y$} whenever $y < z$ and there does not
  exist $w \in X$ such that $y < w < z$. Note that $z$ covers $y$ if and only if $y \leq z$ and ${\rm dim}\,z - {\rm dim}\,y = 1$.
  Next we define a graph $\Gamma$ with vertex set $X$. Vertices $y,z \in X$ are adjacent in $\Gamma$ whenever one of $y,z $ covers the other one.
  The graph $\Gamma$ is the Hasse diagram of the poset $L_N(q)$. The rest of the paper is about the graph $\Gamma$.
  \medskip
  
  \noindent
  Let $\bf 0$ denote the zero subspace of $\mathbb V$. Recall the vertex $x$ of $\Gamma$ from Section 2. For the rest of the paper, we choose $x={\bf 0}$.
  \medskip 
  
  \noindent Recall the path-length distance function $\partial$ for $\Gamma$.
 \begin{lemma} \label{lem:Lqn} The following {\rm (i)--(iii)} hold for the graph $\Gamma$:
 \begin{enumerate} 
 \item[\rm (i)]  for $y \in X$ we have $\partial({\bf 0}, y)= {\rm dim}\,y$;
 \item[\rm (ii)]  $\Gamma$ has diameter $N$ with respect to the vertex $\bf 0$;
  \item[\rm (iv)] $\Gamma$ is bipartite with bipartition $X=X^+\cup X^-$, where
 \begin{align*}
& X^+ = \lbrace y \in X | {\hbox { ${\rm dim}\,y {\rm \; is \; even}$}} \rbrace, \qquad \quad
 X^- = \lbrace y \in X | {\hbox{ ${\rm dim}\,y {\rm \;is\; odd}$}} \rbrace.
  \end{align*}
  \end{enumerate}
  \end{lemma}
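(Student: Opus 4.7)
The plan is to use the combinatorial definition of adjacency in $\Gamma$, which says that $y$ and $z$ are adjacent precisely when one covers the other in the poset $L_N(q)$, equivalently when $y,z$ are comparable and $|{\rm dim}\,y - {\rm dim}\,z| = 1$. Once this is in hand, all three claims reduce to elementary observations about how dimension changes along walks.

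For part (i), I would first establish the upper bound $\partial(\mathbf{0},y) \leq {\rm dim}\,y$ by exhibiting an explicit path. Writing $d = {\rm dim}\,y$, I would pick a complete flag $\mathbf{0} = y_0 < y_1 < \cdots < y_d = y$ with ${\rm dim}\,y_i = i$ (this is possible by repeatedly extending a basis of $y_{i-1}$ by one vector from $y \setminus y_{i-1}$). Consecutive $y_{i-1}, y_i$ are adjacent in $\Gamma$, so this is a walk of length $d$. For the matching lower bound $\partial(\mathbf{0},y) \geq {\rm dim}\,y$, I would observe that if $y = z_0, z_1, \ldots, z_k$ is any walk in $\Gamma$, then $|{\rm dim}\,z_i - {\rm dim}\,z_{i-1}| = 1$ for every $i$, so the dimension of $z_i$ changes by exactly one at each step. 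Therefore ${\rm dim}\,y \leq k$ for any walk of length $k$ from $\mathbf{0}$ to $y$, giving the lower bound.

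Part (ii) then follows immediately from (i): as $y$ ranges over $X$, ${\rm dim}\,y$ takes every integer value from $0$ to $N$, and the maximum is attained at $y = \mathbb{V}$.

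For part (iv), the same observation used in the lower bound of (i) does the work. If $y,z$ are adjacent, then $|{\rm dim}\,y - {\rm dim}\,z| = 1$, so one of ${\rm dim}\,y, {\rm dim}\,z$ is even and the other is odd. Thus no edge has both endpoints in $X^+$ and no edge has both endpoints in $X^-$, which is exactly the bipartite condition.

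I do not anticipate any real obstacle; the main content is packaged in the defining property that adjacency means covering, which already forces dimensions along a walk to change by $\pm 1$ at every step. The only mild care needed is in constructing the flag in part (i), which is a standard extension of basis argument.
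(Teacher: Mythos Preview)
Your argument is correct and is exactly the routine verification one would expect; the paper in fact states this lemma without proof, treating all three parts as immediate from the definition of adjacency in $\Gamma$. The only cosmetic point is that in your lower bound for (i) you start the walk at $y$ but then speak of a walk from $\mathbf{0}$ to $y$; it would read more cleanly to fix one direction, but the content is unaffected.
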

  \noindent For the rest of this paper we adopt the following notation.
  \begin{definition}\rm Define $E^*_i = E^*_i({\bf 0})$ for $0 \leq i \leq N$. Further define $M^*= M^*({\bf 0})$. By construction
  the matrices $\lbrace E^*_i \rbrace_{i=0}^N$ form a basis for $M^*$.
  \end{definition}
  
  \noindent Recall the standard module $V=\mathbb R^X$. By Lemma \ref{lem:Lqn}(i) we obtain
  \begin{align} 
  E^*_iV = {\rm Span}\lbrace \hat y \,\vert\, {\rm dim}\,y = i\rbrace \qquad \qquad (0 \leq i \leq N).
  \label{eq:EsV}
  \end{align}
  
  \noindent For $n \in \mathbb N$ define
  \begin{align*}
  \lbrack n \rbrack_q = \frac{q^n-1}{q-1}.
  \end{align*}
  \noindent We further define
  \begin{align*}
  \lbrack n \rbrack^!_q =  \lbrack n \rbrack_q  \lbrack n-1 \rbrack_q\cdots \lbrack 2 \rbrack_q \lbrack 1 \rbrack_q.
  \end{align*}
  \noindent We interpret $\lbrack 0 \rbrack^!_q =1$. For  $0 \leq i \leq n$ define
  \begin{align*}
  \binom{n}{i}_q =  \frac{\lbrack n \rbrack^!_q}{\lbrack i \rbrack^!_q \lbrack n-i\rbrack^!_q}.
  \end{align*}
  \medskip
  
  \noindent For notational convenience, define $\Gamma_{-1}({\bf 0})=\emptyset$.
  
\begin{lemma} \label{lem:locdr} For $0 \leq i \leq N$ and $y \in \Gamma_i({\bf 0})$ we have
\begin{enumerate}
\item[\rm (i)] $\vert \Gamma(y) \cap \Gamma_{i-1} ({\bf 0}) \vert = \lbrack i \rbrack_q$;
\item[\rm (ii)] $\vert \Gamma(y) \cap \Gamma_{i+1} ({\bf 0}) \vert = \lbrack N-i \rbrack_q$.
\end{enumerate}
\end{lemma}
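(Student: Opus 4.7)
The plan is to identify the combinatorial meaning of $\Gamma(y)\cap \Gamma_{i\pm 1}({\bf 0})$ in the poset $L_N(q)$, then reduce each count to a standard Gaussian-binomial computation. By Lemma \ref{lem:Lqn}(i), for $y\in X$ with ${\rm dim}\,y=i$, the condition $z\in \Gamma_{i-1}({\bf 0})$ means ${\rm dim}\,z=i-1$ and the condition $z\in \Gamma_{i+1}({\bf 0})$ means ${\rm dim}\,z=i+1$. Combined with the definition of adjacency in $\Gamma$ (one of $y,z$ covers the other), we get
\begin{align*}
\Gamma(y)\cap \Gamma_{i-1}({\bf 0}) &= \{z\in X \mid z<y,\ {\rm dim}\,z=i-1\},\\
\Gamma(y)\cap \Gamma_{i+1}({\bf 0}) &= \{z\in X \mid y<z,\ {\rm dim}\,z=i+1\}.
\end{align*}

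For (i), the set on the right consists of the hyperplanes of the $i$-dimensional ${\rm GF}(q)$-vector space $y$. The number of such hyperplanes equals the number of $1$-dimensional subspaces of $y$ (via duality, or directly by counting nonzero vectors up to scalar), which is $\binom{i}{1}_q=[i]_q$.

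For (ii), the $(i+1)$-dimensional subspaces $z$ of $\mathbb V$ containing $y$ correspond bijectively, via $z\mapsto z/y$, to the $1$-dimensional subspaces of the quotient $\mathbb V/y$. Since $\dim \mathbb V/y = N-i$, there are $\binom{N-i}{1}_q=[N-i]_q$ such subspaces.

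There is no real obstacle: the lemma is just the observation that, although $\Gamma$ is not distance-regular, the numbers $c_i(y)=|\Gamma(y)\cap \Gamma_{i-1}({\bf 0})|$ and $b_i(y)=|\Gamma(y)\cap \Gamma_{i+1}({\bf 0})|$ depend only on ${\rm dim}\,y$ because the automorphism group of $\mathbb V$ acts transitively on subspaces of each fixed dimension. The computation reduces to two standard Gaussian-binomial identities. I would present the two counts in parallel and invoke the convention $\Gamma_{-1}({\bf 0})=\emptyset$ only to handle the boundary case $i=0$, where (i) gives $[0]_q=0$ consistently.
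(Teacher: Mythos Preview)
Your proof is correct and is precisely the ``elementary counting argument'' the paper alludes to (the paper's proof simply cites \cite[Section~9.3]{bcn} without giving details). You have written out what the paper leaves implicit: identifying $\Gamma(y)\cap\Gamma_{i-1}({\bf 0})$ with the hyperplanes of $y$ and $\Gamma(y)\cap\Gamma_{i+1}({\bf 0})$ with the lines of $\mathbb V/y$, and then applying the standard Gaussian-binomial counts.
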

\begin{proof} By elementary counting arguments; see for example \cite[Section~9.3]{bcn}.
\end{proof}

 \noindent Lemma \ref{lem:locdr} implies that the vertex $\bf 0$ is distance-regularized in the sense of \cite[Section~1.2]{godsil}. \\
 
 \noindent The following result is well known; see for example \cite[Theorem~9.3.2]{bcn}.
 \begin{lemma} \label{cor:ki} For $0 \leq i \leq N$,
 \begin{align*} 
 \vert \Gamma_i({\bf 0}) \vert = \binom{N}{i}_q.
\end{align*}
\end{lemma}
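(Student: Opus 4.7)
The plan is to combine Lemma \ref{lem:Lqn}(i) with Lemma \ref{lem:locdr} and induct on $i$. By Lemma \ref{lem:Lqn}(i), the set $\Gamma_i({\bf 0})$ is precisely the set of $i$-dimensional subspaces of $\mathbb{V}$, so the claim amounts to the classical enumeration of subspaces of $\mathbb{V}$ by Gaussian binomial coefficients.

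For the base case $i=0$ we have $\Gamma_0({\bf 0}) = \{{\bf 0}\}$ and $\binom{N}{0}_q = 1$. Assume the claim holds for $i-1$; to establish it for $i$ (with $1 \leq i \leq N$), I would count in two ways the number of edges of $\Gamma$ between $\Gamma_{i-1}({\bf 0})$ and $\Gamma_i({\bf 0})$. By Lemma \ref{lem:locdr}(ii) each vertex of $\Gamma_{i-1}({\bf 0})$ has $\lbrack N-i+1 \rbrack_q$ neighbors in $\Gamma_i({\bf 0})$, and by Lemma \ref{lem:locdr}(i) each vertex of $\Gamma_i({\bf 0})$ has $\lbrack i \rbrack_q$ neighbors in $\Gamma_{i-1}({\bf 0})$. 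This yields the recurrence
\[
|\Gamma_{i-1}({\bf 0})| \cdot \lbrack N-i+1 \rbrack_q \;=\; |\Gamma_i({\bf 0})| \cdot \lbrack i \rbrack_q.
\]
Substituting $|\Gamma_{i-1}({\bf 0})| = \binom{N}{i-1}_q$ from the induction hypothesis and using the identity $\binom{N}{i}_q = \binom{N}{i-1}_q \lbrack N-i+1 \rbrack_q / \lbrack i \rbrack_q$ (immediate from the definition of $\binom{N}{i}_q$) delivers the claim.

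As an alternative route, one could count directly: the number of ordered linearly independent $i$-tuples in $\mathbb{V}$ equals $\prod_{j=0}^{i-1}(q^N - q^j)$, while each $i$-dimensional subspace of $\mathbb{V}$ carries $\prod_{j=0}^{i-1}(q^i - q^j)$ such ordered bases; dividing and extracting factors of $q-1$ recovers $\binom{N}{i}_q$. Either route is routine and there is no real obstacle; the statement is textbook, and indeed the authors flag it as well known, citing \cite[Theorem~9.3.2]{bcn}.
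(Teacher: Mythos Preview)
Your proposal is correct; both the inductive double-counting argument via Lemma~\ref{lem:locdr} and the direct count of ordered bases are standard and complete. The paper itself does not supply a proof at all: it simply flags the result as well known and cites \cite[Theorem~9.3.2]{bcn}, so your write-up actually goes further than the paper does.
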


\noindent 
By \eqref{DEIV} and Lemma \ref{cor:ki},  
\begin{align}
{\rm dim}\,E^*_iV = \binom{N}{i}_q \qquad \qquad (0 \leq i \leq N).
\label{eq:Eisdim}
\end{align}

\section{The split bases for the standard module $V$}

\noindent We continue to discuss the graph $\Gamma$ from Section 3. Recall that the vectors $\lbrace \hat y\rbrace_{y \in X}$ form a basis for the standard module $V$.
In this section we introduce four additional bases for $V$, said to be split.

\begin{definition} \label{def:4b} \rm For $y \in X $ define
\begin{align}
&y^{\downarrow \downarrow} = \sum_{z \leq y} \hat z,  \label{eq:ydd} 
\\
&y^{\downarrow \uparrow} = \sum_{z \leq y} \hat z (-1)^{{\rm dim} \,z}, \label{eq:ydu} 
\\
&y^{\uparrow \downarrow} = q^{\binom{N-{\rm dim}\,y}{2}} \sum_{ y \leq z} \hat z q^{(N-{\rm dim}\,z){\rm dim}\, y},   \label{eq:yud}  \\
&y^{\uparrow \uparrow} = q^{\binom{N-{\rm dim}\,y}{2}} \sum_{ y \leq z} \hat z q^{(N-{\rm dim}\,z){\rm dim}\, y} (-1)^{{\rm dim}\,z}. \label{eq:yuu} 
 \end{align}
 \end{definition}


\begin{lemma} \label{lem:4b} Each of following is a basis for the vector space $V$:
\begin{align*}
\lbrace y^{\downarrow \downarrow} \rbrace_{y \in X}, \qquad \quad
\lbrace y^{\downarrow \uparrow} \rbrace_{y \in X}, \qquad \quad
\lbrace y^{\uparrow \downarrow} \rbrace_{y \in X}, \qquad \quad
\lbrace y^{\uparrow \uparrow} \rbrace_{y \in X}.
\end{align*}
\end{lemma}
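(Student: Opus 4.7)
The plan is to show that each of the four sets consists of $|X|$ vectors (which equals $\dim V$), and that the change-of-basis matrix from $\{\hat y\}_{y\in X}$ to the proposed basis is triangular with nonzero diagonal entries, once the vertices are ordered compatibly with dimension. Since $|X|=\dim V$ it suffices to prove linear independence, and triangularity gives this immediately.

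The key observation is the identification of a ``leading term'' in each of the four defining formulas \eqref{eq:ydd}--\eqref{eq:yuu}. For $y^{\downarrow\downarrow}$, the sum is over $z\le y$, so every $\hat z$ that appears satisfies $\dim z\le \dim y$, and the summand with $z=y$ contributes $\hat y$ with coefficient $1$. Thus
\[
y^{\downarrow\downarrow}=\hat y+(\text{a linear combination of }\hat z\text{ with }\dim z<\dim y).
\]
Exactly the same support statement holds for $y^{\downarrow\uparrow}$, and the coefficient of $\hat y$ there is $(-1)^{\dim y}\ne 0$. For $y^{\uparrow\downarrow}$ and $y^{\uparrow\uparrow}$ the sum runs over $z\ge y$, so $\dim z\ge \dim y$; the summand with $z=y$ contributes $\hat y$ with coefficient $q^{\binom{N-\dim y}{2}+(N-\dim y)\dim y}$ in the first case, and that same scalar times $(-1)^{\dim y}$ in the second, both nonzero powers of $q$ up to sign.

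Now I would order the elements of $X$ as $y_1,y_2,\ldots$ with $\dim y_j$ nondecreasing (so all of $\Gamma_0({\bf 0})$ first, then all of $\Gamma_1({\bf 0})$, and so on up to $\Gamma_N({\bf 0})$; see Lemma \ref{lem:Lqn}(i) and equation \eqref{eq:EsV}). With this ordering the transition matrix from $\{\hat y\}_{y\in X}$ to $\{y^{\downarrow\downarrow}\}_{y\in X}$ is block lower triangular: the block indexed by dimension $i$ on the diagonal is the identity matrix of size $\binom{N}{i}_q$, because the summand $z=y$ in \eqref{eq:ydd} contributes $\hat y$ and no other $\hat z$ of dimension $\dim y$ appears. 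The same block-triangular analysis, with diagonal block $(-1)^{i} I$ at dimension $i$, handles $\{y^{\downarrow\uparrow}\}_{y\in X}$. Reversing to nonincreasing dimension, the analogous argument for $\{y^{\uparrow\downarrow}\}_{y\in X}$ and $\{y^{\uparrow\uparrow}\}_{y\in X}$ gives block lower triangular transition matrices whose diagonal block at dimension $i$ is the scalar $q^{\binom{N-i}{2}+(N-i)i}$ (respectively $(-1)^i$ times this scalar) times the identity of size $\binom{N}{i}_q$.

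In each case the transition matrix has nonzero determinant (a product of nonzero powers of $q$, possibly with a sign), so the set is linearly independent; since its cardinality is $|X|=\dim V$, it is a basis. The main ``obstacle'' is really just bookkeeping: verifying that the $z=y$ summand in each of \eqref{eq:ydd}--\eqref{eq:yuu} produces the claimed nonzero coefficient of $\hat y$, and confirming that no $\hat z$ with $\dim z=\dim y$ other than $z=y$ appears (which is immediate from $z\le y$ or $z\ge y$ combined with equality of dimensions forcing $z=y$). Nothing beyond Definition \ref{def:4b} and Lemma \ref{cor:ki} is needed.
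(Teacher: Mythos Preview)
Your proposal is correct and follows essentially the same approach as the paper: the paper's proof simply says the vectors are ``linearly independent by construction,'' and your triangularity argument is exactly the content of that phrase, spelled out in full detail.
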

\begin{proof} The vectors  $\lbrace y^{\downarrow \downarrow} \rbrace_{y \in X}$ are linearly independent by construction, and hence form a basis for $V$.
The remaining assertions are similarly verified.
\end{proof}

\begin{definition}\rm The bases for $V$ from Lemma \ref{lem:4b} are said to be {\it split}.
\end{definition}

\noindent We mention how the split bases are related.
\begin{definition}\label{def:S} \rm Define the diagonal matrix $S \in {\rm Mat}_X(\mathbb R)$ with $(y,y)$-entry 
\begin{align*}
S_{y,y} = (-1)^{{\rm dim}\,y} \qquad \qquad y \in X.
\end{align*}
\end{definition}
\begin{lemma} \label{lem:SS} For $y \in X$ we have $S \hat y = (-1)^{{\rm dim}\,y} \hat y$. 
\end{lemma}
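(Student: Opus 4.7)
The statement is essentially immediate from the definitions, so the ``plan'' is very short. The matrix $S$ is diagonal with $(z,z)$-entry $(-1)^{\dim z}$ for each $z \in X$, and $\hat y$ is the standard basis vector with a $1$ in coordinate $y$ and zeros elsewhere. My plan is simply to multiply $S$ by $\hat y$ componentwise.

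More precisely, I would note that for any diagonal matrix $D$ in ${\rm Mat}_X(\mathbb R)$ and any $y \in X$, left multiplication gives $D\hat y = D_{y,y}\,\hat y$, since the $z$-coordinate of $D\hat y$ equals $\sum_{w \in X} D_{z,w}(\hat y)_w = D_{z,y}\delta_{z,y} = D_{y,y}\delta_{z,y}$. Applying this with $D=S$ and using $S_{y,y} = (-1)^{\dim y}$ from Definition \ref{def:S} yields $S\hat y = (-1)^{\dim y}\,\hat y$, which is the claim.

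There is no real obstacle: the whole argument is one line of bookkeeping from Definition \ref{def:S}. The lemma is stated only to record, for use in later sections, that $S$ acts as $(-1)^{\dim y}$ on the basis vector $\hat y$, presumably so that $S$ can be used to compare the split bases $y^{\downarrow\downarrow}$ with $y^{\downarrow\uparrow}$ (and $y^{\uparrow\downarrow}$ with $y^{\uparrow\uparrow}$) via the signs $(-1)^{\dim z}$ appearing in \eqref{eq:ydu} and \eqref{eq:yuu}.
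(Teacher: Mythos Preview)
Your proof is correct and takes essentially the same approach as the paper, which simply cites Definition~\ref{def:S}. You have merely spelled out the one-line computation that the paper leaves implicit.
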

\begin{proof} By  Definition \ref{def:S}. 
\end{proof}
\begin{lemma} \label{lem:S} \rm We have
\begin{align*}
S = \sum_{i=0}^N (-1)^i E^*_i.
\end{align*}
Moreover $S \in M^*$ and $S^2=I$.
\end{lemma}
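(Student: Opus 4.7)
The plan is to verify the three claims directly from the definitions, comparing diagonal entries.

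First I would establish the identity $S = \sum_{i=0}^N (-1)^i E^*_i$ by checking that both sides agree as diagonal matrices entrywise. Fix $y \in X$. By \eqref{DEFDEI} together with Lemma \ref{lem:Lqn}(i), the $(y,y)$-entry of $E^*_i$ equals $1$ if $\dim y = i$ and $0$ otherwise; since $0 \leq \dim y \leq N$, exactly one term in $\sum_{i=0}^N (-1)^i E^*_i$ contributes at $(y,y)$, giving the value $(-1)^{\dim y}$. This agrees with the $(y,y)$-entry of $S$ by Definition \ref{def:S}. Since $S$ and $\sum_{i=0}^N (-1)^i E^*_i$ are both diagonal matrices with the same diagonal entries, they are equal.

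Next, the membership $S \in M^*$ is immediate from the displayed formula, because $M^*$ has the matrices $\{E^*_i\}_{i=0}^N$ as a basis, so any real linear combination of them lies in $M^*$.

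Finally, for $S^2 = I$, I would use Lemma \ref{lem:SS}: applying $S$ twice to a basis vector $\hat y$ gives $S^2 \hat y = (-1)^{2\dim y}\hat y = \hat y$, and since this holds on a basis of $V$, we have $S^2 = I$. Alternatively, one can square the displayed formula using $E^*_i E^*_j = \delta_{i,j} E^*_i$ and $\sum_{i=0}^N E^*_i = I$, which yields the same conclusion. No serious obstacle is anticipated; the whole statement is essentially an entrywise computation together with the orthogonal idempotent relations among the $E^*_i$.
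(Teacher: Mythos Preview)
Your proposal is correct and follows essentially the same approach as the paper: the paper's proof simply says the first assertion is immediate from Lemma~\ref{lem:Lqn}(i) and Definition~\ref{def:S}, and that the remaining assertions are clear. You have just spelled out those details explicitly.
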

\begin{proof} The first assertion is immediate from Lemma \ref{lem:Lqn}(i) and Definition \ref{def:S}. The other assertions are clear.
\end{proof}

\begin{lemma} \label{lem:Sw} For $y \in X$ the matrix $S$ sends
\begin{align*} 
 y^{\downarrow \downarrow} \leftrightarrow y^{\downarrow \uparrow}, \qquad \qquad
 y^{\uparrow \downarrow} \leftrightarrow y^{\uparrow \uparrow}.
\end{align*}
\end{lemma}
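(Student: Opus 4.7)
The plan is to compute $S$ applied to each of the four split basis vectors directly from their defining formulas in Definition \ref{def:4b}, using the fact that $S$ is diagonal and acts on the standard basis as $S\hat z = (-1)^{{\rm dim}\,z}\hat z$ (Lemma \ref{lem:SS}).

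Concretely, I would apply $S$ termwise inside the sum \eqref{eq:ydd} defining $y^{\downarrow\downarrow}$. Each $\hat z$ becomes $(-1)^{{\rm dim}\,z}\hat z$, and the resulting expression is precisely the right-hand side of \eqref{eq:ydu}, so $Sy^{\downarrow\downarrow} = y^{\downarrow\uparrow}$. The reverse direction $Sy^{\downarrow\uparrow} = y^{\downarrow\downarrow}$ then follows either by the same termwise computation (since $(-1)^{{\rm dim}\,z}\cdot(-1)^{{\rm dim}\,z}=1$) or by invoking $S^2 = I$ from Lemma \ref{lem:S}.

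The argument for the up/up pair is identical: apply $S$ termwise to the sum \eqref{eq:yud} defining $y^{\uparrow\downarrow}$. The prefactor $q^{\binom{N-{\rm dim}\,y}{2}}$ and the weights $q^{(N-{\rm dim}\,z){\rm dim}\,y}$ are scalars on each summand and hence pass through $S$, while each $\hat z$ picks up the sign $(-1)^{{\rm dim}\,z}$, yielding exactly \eqref{eq:yuu}. The reverse direction again follows from $S^2 = I$.

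There is no real obstacle here; the lemma is essentially a bookkeeping observation that the only difference between the ``$\downarrow$'' and ``$\uparrow$'' decorations on the second arrow in Definition \ref{def:4b} is the sign $(-1)^{{\rm dim}\,z}$, which is precisely the diagonal entry of $S$ on $\hat z$. The proof fits in a few lines.
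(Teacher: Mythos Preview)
Your proposal is correct and takes essentially the same approach as the paper, which simply cites Definition~\ref{def:4b} and Lemma~\ref{lem:SS}. You have spelled out in more detail exactly what that one-line proof means.
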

\begin{proof} By Definition \ref{def:4b} and
Lemma \ref{lem:SS}.
\end{proof}

\section{The matrices $A$ and $A^*$}

We continue to discuss the graph $\Gamma$ from Section 3.
In this section we introduce two matrices $A$ and $A^*$ in ${\rm Mat}_X(\mathbb R)$. The matrix $A$ is a weighted adjacency matrix for $\Gamma$,
and the matrix $A^*$ generates $M^*$.  We investigate the eigenvalues and eigenspaces of $A$ and $A^*$. In the next section, we will explain how $A$ and $A^*$ act on the vectors in
Definition \ref{def:4b}. 
\medskip

\noindent The following matrix $A$ was introduced by S. Ghosh and M. Srinivasan \cite{murali}.
\begin{definition}\label{def:A} \rm (See \cite[Section 1]{murali}.) Define a matrix $A \in {\rm Mat}_X(\mathbb R)$ that has $(y,z)$-entry
\begin{align*}
A_{y,z} = \begin{cases} 1 &{\mbox{\rm if $y$  covers $z$}}; \\
                                      q^{{\rm dim}\,y} &{\mbox{\rm if $z$ covers $y$}}; \\
                                         0 & {\mbox{\rm if $y, z$ are not adjacent}}
                \end{cases} 
                \qquad \qquad y, z \in X.
\end{align*}
\end{definition}

\noindent Note that $A$ is a weighted adjacency matrix for $\Gamma$. Next we clarify how $A$ acts on the vectors $\lbrace \hat y \rbrace_{y \in X}$.

\begin{lemma} \label{lem:Ay} For $y \in X$,
\begin{align*}
A \hat y = \sum_{y \;{\rm covers}\; z} \hat z q^{{\rm dim}\, z} + \sum_{z \;{\rm covers}\; y} \hat z.
\end{align*}
\end{lemma}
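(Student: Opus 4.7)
The plan is to compute $A\hat{y}$ coordinate-by-coordinate using the definition of matrix-vector multiplication together with Definition \ref{def:A}. For any $z \in X$, the $z$-coordinate of $A\hat{y}$ equals $\sum_{w \in X} A_{z,w}\,(\hat{y})_w = A_{z,y}$, since $\hat{y}$ has a $1$ in the $y$-position and zeros elsewhere. So the task reduces to identifying, for each $z$, the value of the $(z,y)$-entry of $A$.

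Next I would read off $A_{z,y}$ from Definition \ref{def:A}, being careful to swap the roles of the row and column indices used there. With $(z,y)$ in place of $(y,z)$ in the piecewise definition, we obtain $A_{z,y} = 1$ when $z$ covers $y$, $A_{z,y} = q^{\dim z}$ when $y$ covers $z$, and $A_{z,y} = 0$ otherwise. Substituting these values into $A\hat{y} = \sum_{z \in X} A_{z,y}\,\hat{z}$ and discarding the zero terms yields
\begin{align*}
A\hat{y} = \sum_{z \text{ covers } y} \hat{z} \;+\; \sum_{y \text{ covers } z} q^{\dim z}\,\hat{z},
\end{align*}
which is the desired formula.

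There is no genuine obstacle here; the only point that demands a moment of care is the index swap between rows and columns, since Definition \ref{def:A} is not symmetric in $y$ and $z$ (the weight depends on which vertex has the larger dimension). The lemma is essentially a bookkeeping statement unpacking Definition \ref{def:A}, and will serve as the computational starting point for the action formulas on the split bases developed in the subsequent sections.
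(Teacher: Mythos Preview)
Your proof is correct and takes essentially the same approach as the paper, which simply writes ``By Definition \ref{def:A}.'' Your version is just a more explicit unpacking of that one-line justification, correctly handling the row/column index swap.
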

\begin{proof} By Definition \ref{def:A}.
\end{proof}

\begin{lemma} \label{lem:AsTD} For $0 \leq i \leq N$,
\begin{align*}
A E^*_iV \subseteq E^*_{i-1} V + E^*_{i+1}V.
\end{align*}
\end{lemma}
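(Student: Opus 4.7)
The plan is to reduce the claim to a statement about each spanning vector of $E^*_i V$ and then read off the action of $A$ from Lemma \ref{lem:Ay}. By \eqref{eq:EsV} the subspace $E^*_i V$ is spanned by the vectors $\hat y$ with $\dim y = i$, so by linearity it suffices to verify that $A \hat y \in E^*_{i-1}V + E^*_{i+1}V$ for each such $y$.

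Fix $y \in X$ with $\dim y = i$, and apply Lemma \ref{lem:Ay}. The resulting expression is a linear combination of vectors $\hat z$ indexed in two ways: either $y$ covers $z$, in which case $\dim z = i-1$, or $z$ covers $y$, in which case $\dim z = i+1$. Invoking \eqref{eq:EsV} again, the first family of terms lies in $E^*_{i-1}V$ and the second family lies in $E^*_{i+1}V$. Summing gives the desired containment.

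There is essentially no obstacle here; the lemma is a direct combinatorial consequence of Definition \ref{def:A} combined with the fact (already noted implicitly in Lemma \ref{lem:Lqn}(i),(iv)) that adjacency in $\Gamma$ forces the dimensions of the endpoints to differ by exactly one. One should also remark on the boundary cases $i=0$ and $i=N$: when $i=0$ the first sum in Lemma \ref{lem:Ay} is empty (there is no $z$ covered by $\bf 0$), and when $i=N$ the second sum is empty (there is no $z$ covering $\mathbb V$), so by the convention $E^*_{-1}=0$ and $E^*_{N+1}=0$ the containment continues to hold at both ends.
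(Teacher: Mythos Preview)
Your proof is correct and follows exactly the same approach as the paper, which simply cites \eqref{eq:EsV} and Lemma~\ref{lem:Ay}; you have merely spelled out the details and handled the boundary cases explicitly.
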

\begin{proof} By   \eqref{eq:EsV} and Lemma \ref{lem:Ay}.
\end{proof}

\noindent The following result is due to S.~Ghosh and M.~Srinivasan \cite{murali}. We give a short proof for the sake of completeness.
\begin{lemma} \label{lem:Ad} {\rm (See \cite[Section~1]{murali}.)} The matrix $A$ is diagonalizable.
\end{lemma}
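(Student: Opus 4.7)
The plan is to exhibit a positive real diagonal matrix $D \in \mathrm{Mat}_X(\mathbb{R})$ such that the conjugate $B = D A D^{-1}$ is real symmetric. Once this is done, $B$ is orthogonally diagonalizable over $\mathbb{R}$, and since $A = D^{-1} B D$ is similar to $B$, the matrix $A$ is also diagonalizable over $\mathbb{R}$.

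To find the right $D$, I set $D_{y,y} = \sqrt{f(\dim y)}$ for some positive function $f$ on $\{0, 1, \ldots, N\}$ to be determined. Conjugation by a diagonal matrix preserves the zero pattern, so $B_{y,z} = (D_{y,y}/D_{z,z}) A_{y,z}$ agrees with $A$ off the edge set and is symmetric if and only if $B_{y,z} = B_{z,y}$ for every adjacent pair $y,z$. When $y$ covers $z$, Definition \ref{def:A} gives $A_{y,z} = 1$ and $A_{z,y} = q^{\dim z}$, so the symmetry condition reads $f(\dim y)/f(\dim z) = q^{\dim z}$. Writing $i = \dim z$, this is the telescoping recursion $f(i+1)/f(i) = q^{i}$, which is solved up to an overall positive scalar by $f(i) = q^{\binom{i}{2}}$.

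Accordingly, I would define $D_{y,y} = q^{\binom{\dim y}{2}/2}$ and verify the symmetry of $B$ directly. With $y$ covering $z$ and $i = \dim z$, both entries collapse to the common value $q^{i/2}$:
$$B_{y,z} = \frac{D_{y,y}}{D_{z,z}} \cdot 1 = q^{(\binom{i+1}{2} - \binom{i}{2})/2} = q^{i/2}, \qquad B_{z,y} = \frac{D_{z,z}}{D_{y,y}} \cdot q^{i} = q^{-i/2} \cdot q^{i} = q^{i/2}.$$
The argument is elementary, with no real obstacle; the only design choice is the exponent $\binom{\dim y}{2}$ in $D_{y,y}^2$, and this is essentially forced on us by the telescoping condition arising from the asymmetric entries $1$ and $q^{\dim y}$ in Definition \ref{def:A}.
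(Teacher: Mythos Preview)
Your proof is correct and essentially identical to the paper's: both conjugate $A$ by the diagonal matrix with $(y,y)$-entry $q^{\binom{\dim y}{2}/2}$ (the paper writes this as $\Delta=\sum_i d_i E^*_i$ with $d_0=1$, $d_{i+1}/d_i=q^{i/2}$, which unwinds to your formula) and observe that the result is real symmetric, hence diagonalizable.
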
 
\begin{proof} The scalar $q $ is positive; let $b$ denote the positive square root of $q$. Define real numbers $\lbrace d_i \rbrace_{i =0}^N$ such that
$d_0=1$ and $d_{i+1}/d_i = b^i$ for $0 \leq i \leq N-1$. Define the matrix 
$\Delta = \sum_{i=0}^N d_i E^*_i$.
The matrix $\Delta$ is invertible, with inverse
$\Delta^{-1} = \sum_{i=0}^N d^{-1}_i E^*_i$.
The matrix
$\Delta A \Delta^{-1} $ has $(y,z)$-entry
\begin{align*}
(\Delta A \Delta^{-1})_{y,z} = \begin{cases} b^{{\rm dim}\,z} &{\mbox{\rm if $y$  covers $z$}}; \\
                                      b^{{\rm dim}\,y} &{\mbox{\rm if $z$ covers $y$}}; \\
                                         0 & {\mbox{\rm if $y, z$ are not adjacent}}
                \end{cases} 
                \qquad \qquad y, z \in X.
\end{align*}
The matrix $\Delta A \Delta^{-1}$ is symmetric. By construction $\Delta A \Delta^{-1}$ has real entries, so $\Delta A \Delta^{-1}$ is
diagonalizable. The  matrices $A$ and $\Delta A \Delta^{-1}$ are similar, so $A$ is diagonalizable.
\end{proof}
\noindent Next we compute the eigenvalues of $A$.

\begin{lemma} \label{lem:Avdd} For $y \in X$,
\begin{align} 
A y^{\downarrow \downarrow} = \frac{q^{{\rm dim}\,y} - q^{N-{\rm dim}\,y}}{q-1} y^{\downarrow \downarrow} + \sum_{z \;{\rm covers}\; y} z^{\downarrow \downarrow}.
\label{eq:Aydd}
\end{align}
\end{lemma}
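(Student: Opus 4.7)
The plan is to compute both sides by expanding in the basis $\{\hat u\}_{u\in X}$ and matching coefficients, using Lemma \ref{lem:Ay} on the left and the definition of $z^{\downarrow\downarrow}$ on the right.

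First I would expand the left-hand side as
\begin{align*}
A y^{\downarrow\downarrow} = \sum_{w\leq y} A\hat w = \sum_{w\leq y}\Bigl(\sum_{w\,{\rm covers}\,u}\hat u\, q^{{\rm dim}\,u} + \sum_{u\,{\rm covers}\,w}\hat u\Bigr),
\end{align*}
and collect, for each $u\in X$, the coefficient of $\hat u$ as $q^{{\rm dim}\,u}\,a(u,y) + b(u,y)$, where $a(u,y)=|\{w : u<w\leq y,\;{\rm dim}\,w={\rm dim}\,u+1\}|$ and $b(u,y)=|\{w\leq y : w<u,\;{\rm dim}\,w={\rm dim}\,u-1\}|$. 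I would split into the cases $u\leq y$ and $u\not\leq y$.

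For $u\leq y$ the standard count in $L_N(q)$ gives $a(u,y)=[{\rm dim}\,y-{\rm dim}\,u]_q$ (one-dimensional subspaces of $y/u$) and $b(u,y)=[{\rm dim}\,u]_q$ (codimension-one subspaces of $u$, all automatically in $y$). A short algebraic simplification then yields
\begin{align*}
q^{{\rm dim}\,u}[{\rm dim}\,y-{\rm dim}\,u]_q + [{\rm dim}\,u]_q = \frac{q^{{\rm dim}\,y}-1}{q-1}=[{\rm dim}\,y]_q.
\end{align*}
For $u\not\leq y$ the term $a(u,y)$ vanishes (any $w$ with $u<w\leq y$ would force $u\leq y$), while $b(u,y)$ counts subspaces $w\leq u\cap y$ of codimension one in $u$; this is $1$ precisely when ${\rm dim}(u\cap y)={\rm dim}\,u-1$, equivalently ${\rm dim}(u+y)={\rm dim}\,y+1$, and $0$ otherwise.

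Next I would expand the right-hand side. The term $\sum_{z\,{\rm covers}\,y} z^{\downarrow\downarrow}=\sum_{z\,{\rm covers}\,y}\sum_{u\leq z}\hat u$ contributes, to the coefficient of $\hat u$, the quantity $|\{z : z\,{\rm covers}\,y,\;u\leq z\}|$. For $u\leq y$ this is simply $[N-{\rm dim}\,y]_q$ (all covers of $y$ automatically contain $u$); for $u\not\leq y$ the condition $y\leq z$ and $u\leq z$ with ${\rm dim}\,z={\rm dim}\,y+1$ forces $z=u+y$, giving $1$ when ${\rm dim}(u+y)={\rm dim}\,y+1$ and $0$ otherwise. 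Adding the contribution $\frac{q^{{\rm dim}\,y}-q^{N-{\rm dim}\,y}}{q-1}$ from $y^{\downarrow\downarrow}$ to the coefficient for $u\leq y$, I would use
\begin{align*}
\frac{q^{{\rm dim}\,y}-q^{N-{\rm dim}\,y}}{q-1} + [N-{\rm dim}\,y]_q = [{\rm dim}\,y]_q
\end{align*}
to match the left-hand side for $u\leq y$, and the verifications for $u\not\leq y$ already agree case by case. The main nuisance is bookkeeping the two subcases $u\leq y$ versus $u\not\leq y$ (and within the latter, the condition on ${\rm dim}(u+y)$); once the Gaussian-coefficient identity above is spotted, no step is more than a line.
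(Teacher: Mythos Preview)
Your proof is correct and is precisely the ``routine check using the combinatorics of $L_N(q)$'' that the paper invokes without writing out: you expand both sides in the $\{\hat u\}$ basis via Lemma~\ref{lem:Ay} and the definition of $y^{\downarrow\downarrow}$, then match coefficients using the standard counts $[\dim y-\dim u]_q$, $[\dim u]_q$, $[N-\dim y]_q$ from the subspace lattice. The only thing to add is that your Gaussian-coefficient identity and the case split on $\dim(u+y)$ are exactly the content of the reference the paper cites, so there is no methodological difference---you have simply supplied the details the paper elides.
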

\begin{proof} This is routinely checked using the combinatorics of $L_N(q)$, see for example \cite[Theorem~9.3.2]{bcn}.
\end{proof}

\noindent The following result is due to S.~Ghosh and M.~Srinivasan \cite{murali}. We give a short proof for the sake of completeness.

\begin{lemma}\label{lem:Mur} {\rm (See \cite[Section~2]{murali}.)} The eigenvalues of $A$ are $\lbrace \theta_i \rbrace_{i=0}^N$, where
\begin{align}
\theta_i = \frac{q^{N-i} - q^i}{q-1} \qquad \qquad (0 \leq i \leq N).
\label{eq:thi}
\end{align} Moreover, for $0 \leq i \leq N$ the dimension of the $\theta_i$-eigenspace of the $A$ is equal to $\binom{N}{i}_q$.
\end{lemma}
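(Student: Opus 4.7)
The plan is to read off the spectrum of $A$ directly from the triangular action established in Lemma \ref{lem:Avdd}. I would order the split basis $\{y^{\downarrow\downarrow}\}_{y \in X}$ so that vertices of smaller dimension precede vertices of larger dimension. Since every $z$ covering $y$ satisfies $\dim z = \dim y + 1$, the formula \eqref{eq:Aydd} shows that in this basis $A$ is represented by a lower-triangular matrix whose $y$-th diagonal entry is
$$c_y := \frac{q^{\dim y} - q^{N - \dim y}}{q-1}.$$
Hence the eigenvalues of $A$, counted with algebraic multiplicity, form the multiset $\{c_y : y \in X\}$. By Lemma \ref{lem:Ad} the matrix $A$ is diagonalizable, so algebraic and geometric multiplicities coincide.

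Next I would match the values $c_y$ with the proposed eigenvalues $\theta_i$. A direct substitution shows $c_y = \theta_{N-\dim y}$, since $\theta_{N-i} = (q^i - q^{N-i})/(q-1)$. Invoking Lemma \ref{cor:ki}, for each $j$ with $0 \leq j \leq N$ the (geometric) multiplicity of $\theta_j$ as an eigenvalue of $A$ is
$$|\{y \in X : \dim y = N-j\}| = \binom{N}{N-j}_q = \binom{N}{j}_q,$$
which is the stated formula.

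To conclude that the distinct eigenvalues of $A$ are exactly $\{\theta_i\}_{i=0}^N$, I would verify that these $N+1$ scalars are pairwise distinct: since $q > 1$, the function $i \mapsto q^{N-i} - q^i$ is strictly decreasing on $\{0, 1, \ldots, N\}$, so the $\theta_i$ are pairwise distinct, and the multiplicities $\binom{N}{j}_q$ sum to $|X|$ as required. The only step that requires any outside input is the passage from algebraic to geometric multiplicity, which is handled by the diagonalizability already supplied in Lemma \ref{lem:Ad}; no other serious obstacle arises, as the bookkeeping with the reindexing $i \mapsto N-i$ is the only subtlety.
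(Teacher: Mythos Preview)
Your proof is correct and follows essentially the same approach as the paper: both order the split basis $\{y^{\downarrow\downarrow}\}$ by dimension, observe from Lemma~\ref{lem:Avdd} that $A$ is lower triangular in this basis, and read off the diagonal entries. You are slightly more explicit than the paper in invoking Lemma~\ref{lem:Ad} to pass from algebraic to geometric multiplicities and in checking that the $\theta_i$ are pairwise distinct, but these are refinements of the same argument rather than a different route.
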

\begin{proof} 
Linearly order the elements of $X$, such that for all $y, z \in X$ the element $y$ comes before $z$ if ${\rm dim}\,y< {\rm dim}\,z$.
Consider the matrix in ${\rm Mat}_X(\mathbb R)$ that represents $A$ with respect to the basis $\lbrace y^{\downarrow \downarrow} \rbrace_{y \in X}$.
This matrix is lower triangular by Lemma \ref{lem:Avdd}. Also by Lemma \ref{lem:Avdd}, this matrix has diagonal entries $\lbrace \theta_i \rbrace_{i=0}^N$, with $\theta_i$ appearing
 $\binom{N}{i}_q$
 times for $0 \leq i \leq N$. The result follows.
\end{proof} 
\noindent We remark that
\begin{align}
\theta_{N-i} = - \theta_i \qquad \qquad (0 \leq i \leq N).
\label{eq:th}
\end{align}

\noindent Recall from Section 2 the adjacency algebra $M$ generated by $A$. 
By Lemmas \ref{lem:Ad}, \ref{lem:Mur} 
the vector space $M$ has a basis $\lbrace E_i \rbrace_{i=0}^N$ of primitive idempotents, labelled such that $A= \sum_{i=0}^N \theta_i E_i$. The subspace
$E_iV$ is the $\theta_i$-eigenspace of $A$ for $0 \leq i \leq N$. By this and Lemma \ref{lem:Mur},
\begin{align}
{\rm dim}\, E_iV = \binom{N}{i}_q \qquad \qquad (0 \leq i \leq N).
\label{eq:Eidim}
\end{align}
\noindent Recall the integer  $\mathcal D$ from below Definition \ref{def:wa}.
We have $\mathcal D=N$.

\begin{definition}\label{def:As} \rm Define a diagonal matrix $A^* \in {\rm Mat}_X(\mathbb R)$ with $(y,y)$-entry
\begin{align*}
A^*_{y,y} = q^{-{\rm dim}\,y} \qquad \qquad y \in X.
\end{align*}
\end{definition}
\begin{lemma} \label{lem:AsEs} We have
\begin{align*}
A^* = \sum_{i=0}^N q^{-i} E^*_i.
\end{align*}
Moreover, $A^*$ generates $M^*$.
\end{lemma}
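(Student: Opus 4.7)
The plan is to verify the identity $A^* = \sum_{i=0}^N q^{-i} E_i^*$ by a direct diagonal-entry comparison, and then deduce that $A^*$ generates $M^*$ from a dimension count using the distinctness of the eigenvalues $\{q^{-i}\}_{i=0}^N$.

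For the first assertion, I would fix $y \in X$ and compute the $(y,y)$-entry of the right-hand side. By the definition \eqref{DEFDEI} of the dual primitive idempotents, $(E_i^*)_{y,y}$ equals $1$ when $\partial({\bf 0},y)=i$ and $0$ otherwise, so exactly one term in the sum $\sum_{i=0}^N q^{-i}(E_i^*)_{y,y}$ is nonzero, namely the one with $i = \partial({\bf 0},y)$. By Lemma \ref{lem:Lqn}(i) we have $\partial({\bf 0},y) = \dim y$, so the $(y,y)$-entry on the right-hand side equals $q^{-\dim y}$, which matches Definition \ref{def:As}. Since both matrices are diagonal and agree entry-wise, they are equal.

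For the second assertion, observe that since $A^* = \sum_{i=0}^N q^{-i} E_i^*$ lies in the span of $\{E_i^*\}_{i=0}^N$, and since each $E_i^*$ commutes with $A^*$, the subalgebra generated by $A^*$ is contained in $M^*$, which has dimension $N+1$ by construction. On the other hand, the scalars $\{q^{-i}\}_{i=0}^N$ are mutually distinct (as $q \geq 2$ is a prime power), so $A^*$ has $N+1$ distinct eigenvalues $q^{-0}, q^{-1}, \dots, q^{-N}$ with eigenspaces $E_0^*V, E_1^*V, \dots, E_N^*V$. Consequently the minimal polynomial of $A^*$ has degree $N+1$, which forces $\{I, A^*, (A^*)^2, \dots, (A^*)^N\}$ to be linearly independent. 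These $N+1$ matrices lie in $M^*$, so they form a basis for $M^*$, and therefore $A^*$ generates $M^*$.

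No step here looks like a real obstacle; the only thing to be careful about is invoking Lemma \ref{lem:Lqn}(i) to convert the distance condition in the definition of $E_i^*$ into the dimension condition appearing in $A^*$, and to note that the distinctness of $\{q^{-i}\}_{i=0}^N$ relies on $q$ being a prime power (in particular $q \geq 2$).
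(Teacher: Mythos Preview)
Your proof is correct and follows essentially the same approach as the paper: the paper's proof simply cites Lemma~\ref{lem:Lqn}(i) and Definition~\ref{def:As} for the first assertion, and the distinctness of $\{q^{-i}\}_{i=0}^N$ for the second, which is exactly the argument you have spelled out in detail.
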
 
\begin{proof} The first assertion follows from Lemma \ref{lem:Lqn}(i) and Definition \ref{def:As}. The second assertion follows from the first assertion and the fact that $\lbrace q^{-i}\rbrace_{i=0}^N$
are mutually distinct.
\end{proof}

\begin{lemma} The eigenvalues of $A^*$ are $\lbrace \theta^*_i\rbrace_{i=0}^N$, where
\begin{align}
\theta^*_i= q^{-i} \qquad \qquad (0 \leq i \leq N). \label{eq:thsi}
\end{align}
Moreover, for $0 \leq i \leq N$ the $\theta^*_i$-eigenspace of $A^*$ is equal to $E^*_iV$. 
\end{lemma}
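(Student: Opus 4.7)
The plan is to deduce both assertions directly from Lemma \ref{lem:AsEs}, using only the defining properties of the dual primitive idempotents $\{E^*_i\}_{i=0}^N$ stated in Section 2, namely $\sum_{i=0}^N E^*_i = I$ and $E^*_i E^*_j = \delta_{i,j} E^*_i$.

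First I would fix $i$ with $0 \le i \le N$ and take any $v \in E^*_i V$. Then $E^*_j v = \delta_{i,j} v$ for $0 \le j \le N$, since $E^*_j v = E^*_j E^*_i v' = \delta_{i,j} E^*_i v' = \delta_{i,j} v$ for some $v' \in V$ with $v = E^*_i v'$. Applying $A^* = \sum_{j=0}^N q^{-j} E^*_j$ from Lemma \ref{lem:AsEs} gives $A^* v = \sum_{j=0}^N q^{-j} E^*_j v = q^{-i} v$. Hence $E^*_i V$ is contained in the $q^{-i}$-eigenspace of $A^*$, and in particular $q^{-i} = \theta^*_i$ is an eigenvalue of $A^*$ (the eigenspace is nonzero because $E^*_i V \ne 0$, as follows from \eqref{eq:Eisdim}).

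Next I would argue that there are no further eigenvalues and that the containment $E^*_i V \subseteq \{v : A^* v = \theta^*_i v\}$ is an equality. Since $V = \sum_{i=0}^N E^*_i V$ is a direct sum and each summand sits inside the $\theta^*_i$-eigenspace of $A^*$, while the scalars $\{q^{-i}\}_{i=0}^N$ are mutually distinct (as $q>1$ is a prime power), the spaces $E^*_i V$ for $0 \le i \le N$ must exhaust all eigenspaces of $A^*$ and each must coincide with the full $\theta^*_i$-eigenspace.

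There is no real obstacle here; the statement is essentially a bookkeeping consequence of Lemma \ref{lem:AsEs} together with the orthogonality relations for $\{E^*_i\}_{i=0}^N$. The only point one must be careful about is invoking the distinctness of the $q^{-i}$ (which also gives the stronger assertion in Lemma \ref{lem:AsEs} that $A^*$ generates $M^*$) to rule out any collapsing of eigenspaces.
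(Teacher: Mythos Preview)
Your proof is correct and takes essentially the same approach as the paper, which simply cites Lemma~\ref{lem:AsEs}; you have merely unpacked the details that the paper leaves implicit.
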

\begin{proof} By Lemma \ref{lem:AsEs}.
\end{proof}

\noindent Recall the matrix $S$ from Definition \ref{def:S}.

\begin{lemma} \label{lem:SAS} We have
\begin{align*}
S A S^{-1} = - A , \qquad \qquad SA^*S^{-1}= A^* .
\end{align*}
\end{lemma}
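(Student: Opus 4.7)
The plan is to verify both identities entrywise, using the fact from Lemma \ref{lem:S} that $S^2 = I$, so $S^{-1} = S$.

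For the second identity $SA^*S^{-1} = A^*$, I would simply observe that $A^*$ is diagonal by Definition \ref{def:As} and $S$ is diagonal by Definition \ref{def:S}, hence $S$ and $A^*$ commute, so $SA^*S^{-1} = A^*$ is immediate. Alternatively, one may invoke $S = \sum_{i=0}^N (-1)^i E^*_i$ from Lemma \ref{lem:S} and $A^* = \sum_{i=0}^N q^{-i} E^*_i$ from Lemma \ref{lem:AsEs}, together with the orthogonality $E^*_i E^*_j = \delta_{i,j} E^*_i$, to conclude $SA^* = A^*S$.

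For the first identity $SAS^{-1} = -A$, I would compute the $(y,z)$-entry of $SAS$. By Lemma \ref{lem:SS} (equivalently, Definition \ref{def:S}), we have
\begin{align*}
(SAS)_{y,z} = (-1)^{\dim y}(-1)^{\dim z} A_{y,z}.
\end{align*}
If $y,z$ are not adjacent, both sides equal $0$. If $y,z$ are adjacent, then one covers the other and their dimensions differ by exactly one, so $(-1)^{\dim y}(-1)^{\dim z} = -1$, giving $(SAS)_{y,z} = -A_{y,z}$. Thus $SAS = -A$, and the result follows upon multiplying by $S^{-1} = S$. Equivalently, this is the bipartite structure from Lemma \ref{lem:Lqn}(iv): $S$ acts as $+1$ on $X^+$ and $-1$ on $X^-$, and $A$ maps each bipartite half into the other.

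There is no real obstacle here; the content of the lemma is essentially the bipartite structure of $\Gamma$ together with the observation that $S$ and $A^*$ are both diagonal. The only point requiring attention is the sign accounting in the first identity, which is handled uniformly because adjacency in $\Gamma$ forces dimensions to differ by one.
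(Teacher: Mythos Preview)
Your proof is correct and essentially matches the paper's argument. The paper handles the second equation exactly as you do (both matrices are diagonal), and for the first equation invokes Lemma~\ref{lem:S} together with the block-tridiagonal inclusion $A E^*_i V \subseteq E^*_{i-1}V + E^*_{i+1}V$ of Lemma~\ref{lem:AsTD}; your entrywise computation is the same bipartite observation carried out one level more concretely.
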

\begin{proof} The first equation is verified  using
Lemmas  \ref{lem:S}, \ref{lem:AsTD}. The second equation holds since $S$ and $A^*$ are diagonal.
\end{proof} 

\begin{lemma} For $0 \leq i \leq N$ we have
\begin{align}
S E_i S^{-1} = E_{N-i}, \qquad \qquad S E^*_i S^{-1} = E^*_i.\label{eq:SES}
\end{align}
Moreover
\begin{align}
 S E_iV = E_{N-i}V, \qquad \qquad  S E^*_iV = E^*_iV. \label{eq:SEV}
 \end{align}
 \end{lemma}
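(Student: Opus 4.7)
The plan is to derive the subspace identities in \eqref{eq:SEV} directly from Lemma \ref{lem:SAS} and equation \eqref{eq:th}, and then extract the matrix identities in \eqref{eq:SES} from them.

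First I would address $SE_iV = E_{N-i}V$. Rewriting Lemma \ref{lem:SAS} as $SA = -AS$, I would take any $v \in E_iV$ (so $Av = \theta_i v$) and compute $A(Sv) = -SAv = -\theta_i\, Sv = \theta_{N-i}\, Sv$, where the last step uses \eqref{eq:th}. This shows $SE_iV \subseteq E_{N-i}V$; equality then follows because $S^2 = I$ from Lemma \ref{lem:S}, so applying $S$ again produces the reverse inclusion. For $SE^*_iV = E^*_iV$, the cleanest route is to note that $S$ commutes with $A^*$ (again by Lemma \ref{lem:SAS}) and hence preserves every eigenspace of $A^*$; even more directly, Lemma \ref{lem:S} gives $S \in M^*$, and since $M^*$ is commutative, $S$ commutes with $E^*_i$, which immediately yields $SE^*_iS^{-1}=E^*_i$.

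For $SE_iS^{-1}=E_{N-i}$, the idea is that conjugation by $S$ is an algebra automorphism of ${\rm Mat}_X(\mathbb R)$, so $\lbrace SE_iS^{-1} \rbrace_{i=0}^N$ is a family of mutually orthogonal idempotents summing to $I$. The idempotent $SE_iS^{-1}$ has image $SE_iV = E_{N-i}V$ by what was just proved, and kernel $S\bigl(\sum_{j \neq i} E_jV\bigr) = \sum_{k \neq N-i} E_kV$; these coincide with the image and kernel of $E_{N-i}$. Since an idempotent in ${\rm Mat}_X(\mathbb R)$ is uniquely determined by its image and kernel, the equality $SE_iS^{-1} = E_{N-i}$ follows.

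I do not anticipate a genuine obstacle here: the result is a straightforward consequence of Lemma \ref{lem:SAS} combined with the symmetry $\theta_{N-i} = -\theta_i$, together with the standard fact that conjugation by an invertible matrix sends spectral projections of $A$ to spectral projections of the conjugated operator. The only place where one must be slightly careful is matching primitive idempotents rather than just eigenspaces, which the image/kernel argument handles cleanly.
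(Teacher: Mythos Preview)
Your proposal is correct and follows essentially the same approach as the paper, which simply cites \eqref{eq:th} and Lemma \ref{lem:SAS}; you have just spelled out the details that the paper leaves implicit. The image/kernel argument for $SE_iS^{-1}=E_{N-i}$ is perhaps a bit more elaborate than necessary (one can also note that conjugation by $S$ sends $A\mapsto -A$ and hence sends the primitive idempotent for $\theta_i$ to the primitive idempotent for $-\theta_i=\theta_{N-i}$), but it is entirely valid.
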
 
 \begin{proof} By \eqref{eq:th} and Lemma \ref{lem:SAS}.
 \end{proof}

\section{The action of $A$ and $A^*$ on the split bases}

\noindent We continue to discuss the graph $\Gamma$ from Section 3. Recall the four split bases of $V$, from Lemma  \ref{lem:4b}. In this 
section we compute the action of $A$ and $A^*$ on these bases.

\begin{lemma} \label{lem:actiondd} For $0 \leq i \leq N$ and $y \in \Gamma_i({\bf 0})$ we have
\begin{align*}
A y^{\downarrow \downarrow} &= \theta_{N-i} y^{\downarrow \downarrow} + \sum_{z \;{\rm covers}\; y} z^{\downarrow \downarrow},
\\
A^* y^{\downarrow \downarrow} &= \theta^*_i y^{\downarrow \downarrow} + (q-1)q^{-i} \sum_{y \;{\rm covers}\; z} z^{\downarrow \downarrow}.
\end{align*}
\end{lemma}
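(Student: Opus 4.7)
The statement splits into two independent identities; I will handle them separately.

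For the action of $A$, essentially nothing is needed: Lemma~\ref{lem:Avdd} already gives the formula
\[
A y^{\downarrow\downarrow} = \frac{q^{\dim y}-q^{N-\dim y}}{q-1}\, y^{\downarrow\downarrow} + \sum_{z\;\text{covers}\;y} z^{\downarrow\downarrow}.
\]
Since $y\in\Gamma_i(\mathbf 0)$ implies $\dim y=i$ by Lemma~\ref{lem:Lqn}(i), the coefficient equals $-\theta_i$, which by \eqref{eq:th} equals $\theta_{N-i}$. This is the first equation.

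The action of $A^*$ is the substantive part, and my plan is to compare coefficients of $\hat w$ on both sides. By Definition~\ref{def:As} the matrix $A^*$ is diagonal, so
\[
A^* y^{\downarrow\downarrow} = \sum_{z\leq y} q^{-\dim z}\, \hat z.
\]
Thus the coefficient of $\hat w$ on the left hand side, for any $w\leq y$ with $\dim w = j$, is $q^{-j}$. On the right hand side, $\theta^*_i y^{\downarrow\downarrow} = q^{-i} \sum_{z\leq y}\hat z$ contributes $q^{-i}$ to $\hat w$, while the sum $\sum_{y\text{ covers }z} z^{\downarrow\downarrow}$ contributes $\hat w$ once for each $z$ satisfying $w\leq z\leq y$ and $\dim z = i-1$.

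The key counting step uses the standard fact that the interval $[w,y]$ in $L_N(q)$ is isomorphic to the projective geometry $L_{i-j}(q)$, so the number of $z$ in this interval with $\dim z = i-1$ equals $\binom{i-j}{i-1-j}_q = [i-j]_q = (q^{i-j}-1)/(q-1)$. Hence the right hand side coefficient of $\hat w$ is
\[
q^{-i} + (q-1)q^{-i}\,[i-j]_q \;=\; q^{-i} + q^{-i}(q^{i-j}-1) \;=\; q^{-j},
\]
matching the left hand side. Since this holds for every $w\leq y$, the vector identity follows.

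The only step that is not purely formal is the $q$-binomial count in the interval $[w,y]$, but this is a well-known feature of $L_N(q)$ and is recorded in the references cited in Section~3 (e.g.\ \cite[Section~9.3]{bcn}). There is no real obstacle; the calculation is a direct coefficient comparison once the interval structure of $L_N(q)$ is invoked.
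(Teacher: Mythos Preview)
Your proof is correct and follows essentially the same approach as the paper. The paper's proof simply says the first equation is a reformulation of \eqref{eq:Aydd} and the second is ``routinely obtained using \eqref{eq:ydd} and Definition~\ref{def:As}''; your coefficient comparison via the interval count $[i-j]_q$ is exactly the routine computation the paper is alluding to.
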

\begin{proof} The first equation is a reformulation of
\eqref{eq:Aydd}. The second equation is routinely obtained using \eqref{eq:ydd} and Definition \ref{def:As}. 
\end{proof}

\begin{lemma} \label{lem:actiondu} For $0 \leq i \leq N$ and $y \in \Gamma_i({\bf 0})$ we have
\begin{align*}
A y^{\downarrow \uparrow} &= \theta_{i} y^{\downarrow \uparrow} - \sum_{z \;{\rm covers}\; y} z^{\downarrow \uparrow},
\\
A^* y^{\downarrow \uparrow} &= \theta^*_i y^{\downarrow \uparrow} + (q-1)q^{-i} \sum_{y \;{\rm covers}\; z} z^{\downarrow \uparrow}.
\end{align*}
\end{lemma}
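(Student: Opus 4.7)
The plan is to deduce this lemma directly from Lemma \ref{lem:actiondd} by conjugating with the sign involution $S$ of Definition \ref{def:S}. The key observation is that Lemmas \ref{lem:Sw} and \ref{lem:SAS} package exactly the symmetry we need: $S$ swaps $y^{\downarrow\downarrow}$ with $y^{\downarrow\uparrow}$, it anticommutes with $A$, and it commutes with $A^*$.

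First I would write $y^{\downarrow\uparrow} = S\, y^{\downarrow\downarrow}$ using Lemma \ref{lem:Sw}. For the first formula, I compute
\begin{align*}
A y^{\downarrow\uparrow} = A S y^{\downarrow\downarrow} = -S A y^{\downarrow\downarrow},
\end{align*}
apply Lemma \ref{lem:actiondd} to expand $A y^{\downarrow\downarrow}$, then push $S$ back through using Lemma \ref{lem:Sw} on each $z^{\downarrow\downarrow}$ (noting that if $z$ covers $y$ then $z \in X$, so $Sz^{\downarrow\downarrow} = z^{\downarrow\uparrow}$). This yields $-\theta_{N-i} y^{\downarrow\uparrow} - \sum_{z\;{\rm covers}\;y} z^{\downarrow\uparrow}$, and then \eqref{eq:th} converts $-\theta_{N-i}$ into $\theta_i$.

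For the second formula, the computation is parallel but uses $SA^*S^{-1}=A^*$:
\begin{align*}
A^* y^{\downarrow\uparrow} = A^* S y^{\downarrow\downarrow} = S A^* y^{\downarrow\downarrow},
\end{align*}
expand via Lemma \ref{lem:actiondd}, and again apply Lemma \ref{lem:Sw} termwise to obtain $\theta^*_i y^{\downarrow\uparrow} + (q-1)q^{-i}\sum_{y\;{\rm covers}\;z} z^{\downarrow\uparrow}$.

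There is no real obstacle here; the only thing to be careful about is making sure Lemma \ref{lem:Sw} is applied to every vector in sight (including those indexed by neighbors $z$ of $y$), and bookkeeping the sign flip in the first identity so that $\theta_{N-i}$ becomes $\theta_i$. A direct combinatorial proof mimicking Lemmas \ref{lem:Avdd} and \ref{lem:actiondd} is also available via Definition \ref{def:4b}, but the $S$-conjugation route is shorter and cleaner.
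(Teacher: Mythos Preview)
Your proof is correct and follows essentially the same approach as the paper: the paper's proof says to apply $S$ to both sides of the equations in Lemma~\ref{lem:actiondd} and evaluate using \eqref{eq:th} together with Lemmas~\ref{lem:Sw} and~\ref{lem:SAS}, which is exactly the $S$-conjugation argument you carry out in detail.
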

\begin{proof} For the equations in Lemma \ref{lem:actiondd}, apply $S$ to each side and evaluate the result using \eqref{eq:th} along with Lemmas \ref{lem:Sw}, \ref{lem:SAS}.
\end{proof}

\begin{lemma} \label{lem:actionud} For $0 \leq i \leq N$ and $y \in \Gamma_{i}({\bf 0})$ we have
\begin{align*}
A y^{\uparrow \downarrow} &= \theta_{i} y^{\uparrow \downarrow} + \sum_{y \;{\rm covers}\; z} z^{\uparrow \downarrow},
\\
A^* y^{\uparrow \downarrow} &= \theta^*_{i} y^{\uparrow \downarrow} + (q^{-1}-1)q^{-i} \sum_{z \;{\rm covers}\; y} z^{\uparrow \downarrow}.
\end{align*}
\end{lemma}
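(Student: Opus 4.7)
My plan is to verify both identities by direct expansion from the definition of $y^{\uparrow\downarrow}$ in Definition \ref{def:4b}, in the spirit of the proof of Lemma \ref{lem:actiondd} but now working with upward rather than downward sums.

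The $A^*$ identity is the easier of the two. Since $A^*$ is diagonal by Definition \ref{def:As}, applying it to the defining sum for $y^{\uparrow\downarrow}$ multiplies the coefficient of each $\hat w$ by $q^{-{\rm dim}\,w}$; the $w=y$ contribution to $A^* y^{\uparrow\downarrow}-\theta^*_i y^{\uparrow\downarrow}$ cancels automatically, leaving only terms with $w>y$. I then expand $\sum_{z\;{\rm covers}\;y}z^{\uparrow\downarrow}$ in the $\hat w$-basis and use the standard interval enumeration for $L_N(q)$: the number of $z$ covering $y$ with $z\leq w$ equals $\lbrack {\rm dim}\,w-i\rbrack_q$. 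Matching coefficients of each $\hat w$ reduces to a single $q$-power identity that uses $\binom{N-i}{2}-\binom{N-i-1}{2}=N-i-1$.

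For the $A$ identity I apply Lemma \ref{lem:Ay} termwise in $y^{\uparrow\downarrow}$ and reindex the resulting double sum by the target vertex $w'$. Collecting the coefficient of $\hat{w'}$ requires a case split on the position of $y$ relative to $w'$. When $y\leq w'$, both the raising and lowering parts of $A\hat w$ contribute, with counts $\lbrack {\rm dim}\,w'-i\rbrack_q$ (hyperplanes of $w'$ containing $y$) and $\lbrack N-{\rm dim}\,w'\rbrack_q$ (elements covering $w'$). When $y\not\leq w'$ but ${\rm dim}(y\cap w')=i-1$, only the lowering part contributes, through the unique $w=y+w'$. When ${\rm dim}(y\cap w')<i-1$, both sides vanish on $\hat{w'}$. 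In each case the identity reduces to a single $q$-power calculation using $\binom{N-i+1}{2}-\binom{N-i}{2}=N-i$ and $\theta_i=(q^{N-i}-q^i)/(q-1)$.

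I expect the main obstacle to be the case where $y\not\leq w'$: each such $w'$ on the left-hand side must be matched to the correct $z^{\uparrow\downarrow}$ summand on the right-hand side. The key bookkeeping observation is that such a $w'$ lies in the support of $z^{\uparrow\downarrow}$ precisely when $z=y\cap w'$, which yields a bijection between the ``off-support'' $w'$ on the left and the pairs $(z,w')$ with $y$ covering $z$ and $y\not\leq w'\geq z$ on the right. Once this pairing is recognized, everything reduces to the $q$-power identities mentioned above.
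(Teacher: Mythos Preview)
Your proposal is correct and is precisely the ``routine verification using the combinatorics of $L_N(q)$'' that the paper invokes without detail; the paper's own proof is a one-line reference to \cite[Theorem~9.3.2]{bcn}, so you are simply carrying out the computation it leaves implicit. Your case analysis and the key bijection in the $y\not\leq w'$ case are accurate, and the $q$-power identities you isolate are exactly what the calculation reduces to.
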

\begin{proof} The equations are routinely verified using the combinatorics of $L_N(q)$, see for example \cite[Theorem~9.3.2]{bcn}.
\end{proof}

\begin{lemma} \label{lem:actionuu} For $0 \leq i \leq N$ and $y \in \Gamma_{i}({\bf 0})$ we have
\begin{align*}
A y^{\uparrow \uparrow} &= \theta_{N-i} y^{\uparrow \uparrow} - \sum_{y \;{\rm covers}\; z} z^{\uparrow \uparrow},
\\
A^* y^{\uparrow \uparrow} &= \theta^*_{i} y^{\uparrow \uparrow} + (q^{-1}-1)q^{-i} \sum_{z \;{\rm covers}\; y} z^{\uparrow \uparrow}.
\end{align*}
\end{lemma}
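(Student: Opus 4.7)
The plan is to derive this lemma from Lemma \ref{lem:actionud} by applying the involution $S$ to both sides, exactly as Lemma \ref{lem:actiondu} was derived from Lemma \ref{lem:actiondd}. The key ingredients are Lemma \ref{lem:Sw} (which shows $S y^{\uparrow\downarrow}=y^{\uparrow\uparrow}$ and $S z^{\uparrow\downarrow}=z^{\uparrow\uparrow}$), Lemma \ref{lem:SAS} (which gives $SA=-AS$ and $SA^*=A^*S$), the identity $S^2=I$ from Lemma \ref{lem:S}, and the symmetry $\theta_{N-i}=-\theta_i$ from \eqref{eq:th}.

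For the first equation, I would start with the identity $A y^{\uparrow\downarrow}=\theta_i y^{\uparrow\downarrow}+\sum_{y \text{ covers } z} z^{\uparrow\downarrow}$ from Lemma \ref{lem:actionud}, apply $S$ to both sides, and use $SA=-AS$ on the left-hand side together with $Sy^{\uparrow\downarrow}=y^{\uparrow\uparrow}$ and $Sz^{\uparrow\downarrow}=z^{\uparrow\uparrow}$ on the right-hand side. After rearranging signs and invoking $\theta_{N-i}=-\theta_i$, the claimed formula for $A y^{\uparrow\uparrow}$ drops out. Note that if $y\in\Gamma_i({\bf 0})$ and $y$ covers $z$, then $z\in\Gamma_{i-1}({\bf 0})$, so the index $i$ labelling the eigenvalue is that of $y$ and not of the summand, consistent with the stated formula.

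For the second equation, I would apply $S$ to $A^* y^{\uparrow\downarrow}=\theta^*_i y^{\uparrow\downarrow}+(q^{-1}-1)q^{-i}\sum_{z \text{ covers } y} z^{\uparrow\downarrow}$ and use that $S$ commutes with $A^*$ (since both are diagonal). The involution $S$ sends each $\uparrow\downarrow$-vector to the corresponding $\uparrow\uparrow$-vector, giving the claim directly with no sign change. In the summation over $z$ covering $y$, we have $z\in\Gamma_{i+1}({\bf 0})$, and again Lemma \ref{lem:Sw} applies to each summand since it is stated for arbitrary $z\in X$.

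There is no real obstacle, since the lemma is an immediate consequence of the $S$-symmetry. The only thing to double-check is that the signs and eigenvalue indices track correctly: the sign of $\sum_{y\text{ covers } z} z^{\uparrow\uparrow}$ should be negative (from $SA=-AS$), and the eigenvalue $\theta_i$ in Lemma \ref{lem:actionud} should become $\theta_{N-i}$ here via $\theta_{N-i}=-\theta_i$.
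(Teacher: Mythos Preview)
Your proposal is correct and follows essentially the same approach as the paper's own proof, which reads: ``For the equations in Lemma \ref{lem:actionud}, apply $S$ to each side and evaluate the result using \eqref{eq:th} along with Lemmas \ref{lem:Sw}, \ref{lem:SAS}.'' Your write-up simply spells out the sign tracking in more detail.
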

\begin{proof} For the equations in Lemma \ref{lem:actionud}, apply $S$ to each side and evaluate the result using \eqref{eq:th} along with Lemmas \ref{lem:Sw}, \ref{lem:SAS}.
\end{proof}

\section{The split decompositions of the standard module $V$}

\noindent We continue to discuss the graph $\Gamma$ from Section 3. Recall the standard module $V$. By a {\it decomposition of $V$} we mean 
a sequence of nonzero subspaces $\lbrace U_i \rbrace_{i=0}^N$ whose direct sum is equal to $V$. For example,
the sequences $\lbrace E_iV\rbrace_{i=0}^N$ and $\lbrace E^*_iV\rbrace_{i=0}^N$ are decompositions of $V$. In this section
we introduce four additional decompositions of $V$, said to be split. We discuss how the split decompositions are related to
 $\lbrace E_iV\rbrace_{i=0}^N$ and $\lbrace E^*_iV\rbrace_{i=0}^N$.
We also discuss how $A$ and $A^*$ act on the split decompositions.

\begin{lemma} \label{lem:V1} For following hold for $0 \leq i \leq N$.
\begin{enumerate}
\item[\rm (i)] The vectors $\lbrace y^{\downarrow \downarrow} \vert y \in X, \; {\rm dim}\,y \leq i\rbrace$ form a basis for $E^*_0V+E^*_1V+\cdots + E^*_iV$.
\item[\rm (ii)] The vectors $\lbrace y^{\downarrow \downarrow} \vert y \in X, \; {\rm dim}\,y \geq i\rbrace$ form a basis for $E_0V+E_1V+\cdots + E_{N-i}V$.
\end{enumerate}
\end{lemma}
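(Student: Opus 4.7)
The plan is to handle (i) directly from the definition of $y^{\downarrow\downarrow}$ and a dimension count, and to handle (ii) using the triangular action of $A$ on the basis $\lbrace y^{\downarrow\downarrow}\rbrace_{y\in X}$ that comes from Lemma \ref{lem:actiondd}. In both parts, linear independence will be immediate from Lemma \ref{lem:4b}, so the work is to show containment in the relevant sum and then match dimensions using \eqref{eq:Eisdim} and \eqref{eq:Eidim}.

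For (i), I would observe that by \eqref{eq:ydd}, every $y^{\downarrow\downarrow}$ with $\dim y\le i$ is a sum of vectors $\hat z$ with $\dim z\le i$, and each such $\hat z$ lies in $E^*_{\dim z}V\subseteq E^*_0V+E^*_1V+\cdots +E^*_iV$ by \eqref{eq:EsV}. Hence the spanning direction of the claim holds. The vectors $\lbrace y^{\downarrow\downarrow}\mid \dim y\le i\rbrace$ are linearly independent because they form a subset of the basis of Lemma \ref{lem:4b}. Finally, their number is $\sum_{j=0}^i\binom{N}{j}_q$, which by \eqref{eq:Eisdim} equals $\dim\bigl(E^*_0V+\cdots+E^*_iV\bigr)$, so they form a basis.

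For (ii), set $W_i={\rm Span}\lbrace y^{\downarrow\downarrow}\mid \dim y\ge i\rbrace$. I would first note that $W_i$ is $A$-invariant: by Lemma \ref{lem:actiondd}, for $\dim y\ge i$ each summand $z^{\downarrow\downarrow}$ appearing in $A y^{\downarrow\downarrow}$ either equals $y^{\downarrow\downarrow}$ itself or has $\dim z=\dim y+1\ge i+1$, so $Ay^{\downarrow\downarrow}\in W_i$. Next, I would order the basis of $W_i$ by decreasing $\dim y$; with respect to this ordering, Lemma \ref{lem:actiondd} shows that $A|_{W_i}$ is upper triangular with diagonal entries $\theta_{N-\dim y}$. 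Hence the eigenvalues of $A|_{W_i}$, counted with algebraic multiplicity, are $\theta_k$ occurring $\binom{N}{N-k}_q=\binom{N}{k}_q$ times for $0\le k\le N-i$, and no other eigenvalues appear. Since $A$ is diagonalizable on $V$ by Lemma \ref{lem:Ad}, its restriction to the invariant subspace $W_i$ is diagonalizable as well, so $W_i=\bigoplus_{k=0}^{N-i}\bigl(E_kV\cap W_i\bigr)$. Each $E_kV\cap W_i$ has dimension at most $\dim E_kV=\binom{N}{k}_q$ by \eqref{eq:Eidim}, while its dimension must equal the algebraic multiplicity $\binom{N}{k}_q$ of $\theta_k$ on $W_i$; equality of dimensions forces $E_kV\cap W_i=E_kV$ for $0\le k\le N-i$. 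Therefore $W_i=E_0V+E_1V+\cdots+E_{N-i}V$, and the vectors $\lbrace y^{\downarrow\downarrow}\mid\dim y\ge i\rbrace$ are a basis for this sum (again linear independence comes from Lemma \ref{lem:4b}).

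I do not expect a real obstacle here. The only point requiring a little care is the eigenvalue-matching step in (ii): one needs to combine the triangular form from Lemma \ref{lem:actiondd} with the diagonalizability from Lemma \ref{lem:Ad} to upgrade ``$W_i$ has these eigenvalues with these multiplicities'' to ``$W_i$ is the direct sum of the corresponding full eigenspaces of $A$ on $V$.'' The comparison of the counts $\binom{N}{k}_q$ against $\dim E_kV$ via \eqref{eq:Eidim} makes this routine.
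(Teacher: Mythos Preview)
Your proof is correct. Part (i) is exactly the paper's argument. For part (ii), the paper takes a slightly more direct route: rather than showing $W_i$ is $A$-invariant and analyzing the spectrum of $A|_{W_i}$, it shows directly that each individual vector $y^{\downarrow\downarrow}$ with $\dim y=j\ge i$ is annihilated by $(A-\theta_0 I)(A-\theta_1 I)\cdots(A-\theta_{N-j} I)$ (this follows by iterating Lemma~\ref{lem:actiondd}), hence lies in $E_0V+\cdots+E_{N-j}V\subseteq E_0V+\cdots+E_{N-i}V$; the same dimension count via \eqref{eq:Eidim} then finishes. Both arguments extract the same triangularity from Lemma~\ref{lem:actiondd}, but the paper's version avoids the auxiliary fact that a diagonalizable operator restricts diagonalizably to an invariant subspace, while your version has the advantage of identifying $W_i$ as $A$-invariant, which is of independent interest.
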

\begin{proof} (i) Use   \eqref{eq:EsV} along with Definition \ref{def:4b} and Lemma \ref{lem:4b}.
\\
\noindent (ii) We claim  that the given vectors are contained in the given subspace. To prove the claim, pick $y \in X$ such that ${\rm dim}\,y \geq i$. 
For notational convenience, define $j={\rm dim}\,y$.
Using Lemma \ref{lem:actiondd} repeatedly, we obtain
\begin{align*}
(A-\theta_{0}I)(A-\theta_{1}I) \cdots (A-\theta_{N-j}I) y^{\downarrow \downarrow} = 0.
\end{align*}
Consequently 
\begin{align*}
y^{\downarrow\downarrow} \in E_0V+E_1V+\cdots + E_{N-j}V \subseteq E_0V+E_1V+\cdots + E_{N-i}V.
\end{align*}
The claim is proved. By Lemma \ref{lem:4b} the given vectors are linearly independent. By Lemma  \ref{cor:ki} 
and  \eqref{eq:Eidim}, the
 number of given vectors is equal to the dimension of the
given subspace. The result follows from these comments and linear algebra.
\end{proof}

\begin{lemma} \label{lem:V2} For following hold for $0 \leq i \leq N$.
\begin{enumerate}
\item[\rm (i)] The vectors $\lbrace y^{\downarrow \uparrow} \vert y \in X, \; {\rm dim}\,y \leq i\rbrace$ form a basis for $E^*_0V+E^*_1V+\cdots + E^*_iV$.
\item[\rm (ii)] The vectors $\lbrace y^{\downarrow \uparrow} \vert y \in X, \; {\rm dim}\,y \geq i\rbrace$ form a basis for $E_NV+E_{N-1}V+\cdots + E_iV$.
\end{enumerate}
\end{lemma}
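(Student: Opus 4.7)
The plan is to mimic the proof of Lemma \ref{lem:V1}, using Lemma \ref{lem:actiondu} in place of Lemma \ref{lem:actiondd}. In both parts, I first establish that each listed vector lies in the claimed subspace; linear independence is free from Lemma \ref{lem:4b} (which says $\{y^{\downarrow\uparrow}\}_{y\in X}$ is a basis of $V$), and a dimension count via Lemma \ref{cor:ki} together with \eqref{eq:Eisdim} and \eqref{eq:Eidim} closes the argument.

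For part (i), the containment is immediate from the defining formula \eqref{eq:ydu}. Indeed, for $y \in X$ with $\dim y \le i$, every $z \le y$ satisfies $\dim z \le i$, so $\hat z \in E^*_{\dim z}V \subseteq E^*_0V + \cdots + E^*_iV$ by \eqref{eq:EsV}, and hence $y^{\downarrow\uparrow}$ lies in this sum. For the count, the number of $y \in X$ with $\dim y \le i$ equals $\sum_{k=0}^{i} \binom{N}{k}_q$ by Lemma \ref{cor:ki}, which matches $\dim(E^*_0V + \cdots + E^*_iV)$ by \eqref{eq:Eisdim}.

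For part (ii), the containment is the active step. Fix $y$ with $\dim y = j \ge i$. Rewriting Lemma \ref{lem:actiondu} as
\begin{align*}
(A - \theta_j I)\, y^{\downarrow\uparrow} = -\sum_{z \;{\rm covers}\; y} z^{\downarrow\uparrow},
\end{align*}
and iterating (each $z$ appearing on the right has $\dim z = j+1$, etc.), I obtain
\begin{align*}
(A - \theta_N I)(A - \theta_{N-1}I)\cdots (A - \theta_j I)\, y^{\downarrow\uparrow} = 0,
\end{align*}
since at the top of the poset ($\dim = N$) there are no covers and Lemma \ref{lem:actiondu} gives $(A - \theta_N I) z^{\downarrow\uparrow} = 0$. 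Because the $\theta_k$ are distinct eigenvalues of $A$, the kernel of the above product is $E_jV + E_{j+1}V + \cdots + E_N V$, which is contained in $E_iV + \cdots + E_NV$ since $j \ge i$. Thus $y^{\downarrow\uparrow} \in E_NV + E_{N-1}V + \cdots + E_iV$.

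To conclude, the listed vectors in each part are a subset of the basis $\{y^{\downarrow\uparrow}\}_{y \in X}$ from Lemma \ref{lem:4b}, hence linearly independent. For (ii), the count of $y$ with $\dim y \ge i$ is $\sum_{k=i}^{N}\binom{N}{k}_q$, which matches $\dim(E_iV + \cdots + E_NV)$ by \eqref{eq:Eidim}. Since linearly independent vectors of the correct cardinality inside a subspace of matching dimension must form a basis, both (i) and (ii) follow. The only nontrivial step is the iterated-annihilation argument in (ii); once one sees that Lemma \ref{lem:actiondu} is upper triangular with respect to the dimension filtration (unlike Lemma \ref{lem:actiondd}, which was lower triangular and fed Lemma \ref{lem:V1}), the rest is bookkeeping.
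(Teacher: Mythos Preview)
Your argument is correct, but the paper takes a different (and shorter) route: it simply applies the involution $S$ to Lemma~\ref{lem:V1}. Since $S$ swaps $y^{\downarrow\downarrow}\leftrightarrow y^{\downarrow\uparrow}$ (Lemma~\ref{lem:Sw}) while sending $E_iV\to E_{N-i}V$ and fixing $E^*_iV$ (equation~\eqref{eq:SEV}), both parts of Lemma~\ref{lem:V2} drop out of Lemma~\ref{lem:V1} in one line. Your approach instead reruns the direct argument of Lemma~\ref{lem:V1} with Lemma~\ref{lem:actiondu} in place of Lemma~\ref{lem:actiondd}; this is perfectly valid and more self-contained (it does not need the $S$ machinery), while the paper's approach is more economical and makes the underlying symmetry explicit. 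One small expository slip: your closing remark that Lemma~\ref{lem:actiondu} is ``upper triangular'' while Lemma~\ref{lem:actiondd} is ``lower triangular'' with respect to the dimension filtration is not right---in both lemmas $A$ sends a basis vector indexed by $y$ to a combination of itself and vectors indexed by covers of $y$, so both are lower triangular in the same sense; the genuine difference is that the diagonal entry is $\theta_i$ rather than $\theta_{N-i}$, which is exactly what places $y^{\downarrow\uparrow}$ in $E_jV+\cdots+E_NV$ instead of $E_0V+\cdots+E_{N-j}V$.
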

\begin{proof} Apply $S$ to everything in Lemma \ref{lem:V1}, and evaluate the result using
Lemma \ref{lem:Sw} 
and
\eqref{eq:SEV}.
\end{proof}

\begin{lemma} \label{lem:V3} For following hold for $0 \leq i \leq N$.
\begin{enumerate}
\item[\rm (i)] The vectors $\lbrace y^{\uparrow \downarrow} \vert y \in X, \; {\rm dim}\,y \geq i\rbrace$ form a basis for $E^*_NV+E^*_{N-1}V+\cdots + E^*_iV$.
\item[\rm (ii)] The vectors $\lbrace y^{\uparrow \downarrow} \vert y \in X, \; {\rm dim}\,y \leq i\rbrace$ form a basis for $E_0V+E_1V+\cdots + E_{i}V$.
\end{enumerate}
\end{lemma}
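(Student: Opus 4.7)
The plan is to follow the template established in the proofs of Lemmas \ref{lem:V1} and \ref{lem:V2}, using the action of $A$ on the $\uparrow\downarrow$ basis from Lemma \ref{lem:actionud}. Part (i) is a direct consequence of Definition \ref{def:4b}, while part (ii) uses the triangular action of $A$ to place each $y^{\uparrow\downarrow}$ in a low-dimensional initial segment of eigenspaces of $A$, followed by a dimension count.

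For part (i), the inclusion is immediate from the definition: by \eqref{eq:yud}, each $y^{\uparrow\downarrow}$ is a linear combination of $\hat z$ with $y \leq z$, so $\dim z \geq \dim y \geq i$, and therefore $y^{\uparrow\downarrow} \in E^*_iV + E^*_{i+1}V + \cdots + E^*_NV$ by \eqref{eq:EsV}. Linear independence of the given vectors follows from Lemma \ref{lem:4b}. For the dimension count, Lemma \ref{cor:ki} gives $\sum_{j=i}^N \binom{N}{j}_q$ vectors, and \eqref{eq:Eisdim} shows this matches $\dim \bigl(\sum_{j=i}^N E^*_jV\bigr)$, so the vectors form a basis.

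For part (ii), I will show that each $y^{\uparrow\downarrow}$ with $\dim y = j$ satisfies
\begin{align*}
(A - \theta_0 I)(A - \theta_1 I)\cdots(A - \theta_j I)\, y^{\uparrow\downarrow} = 0,
\end{align*}
which forces $y^{\uparrow\downarrow} \in E_0V + E_1V + \cdots + E_jV \subseteq E_0V + \cdots + E_iV$ whenever $j \leq i$. The first relation in Lemma \ref{lem:actionud} reads $A y^{\uparrow\downarrow} = \theta_j y^{\uparrow\downarrow} + \sum_{y \text{ covers } z} z^{\uparrow\downarrow}$, where each $z$ has $\dim z = j-1$. Thus $(A-\theta_j I)y^{\uparrow\downarrow}$ lies in the span of $z^{\uparrow\downarrow}$ with $\dim z = j-1$, and one further application of $(A - \theta_{j-1}I)$ drops the dimension by one more; iterating this $j+1$ times yields the claimed annihilation. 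Linear independence comes from Lemma \ref{lem:4b}, and by Lemma \ref{cor:ki} together with \eqref{eq:Eidim}, the number of vectors with $\dim y \leq i$ equals $\sum_{j=0}^i \binom{N}{j}_q = \dim\bigl(\sum_{j=0}^i E_jV\bigr)$, closing the argument.

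The main obstacle is a bookkeeping check rather than a conceptual one: namely confirming that the covering direction in the first formula of Lemma \ref{lem:actionud} is $y$ covering $z$ (strictly reducing $\dim$), which is exactly what makes the product of $(A - \theta_k I)$ factors telescope downward and annihilate $y^{\uparrow\downarrow}$. This is the mirror of the argument in Lemma \ref{lem:V1}(ii), where the covering direction was reversed and the eigenvalues decreased as $\theta_{N-j}, \theta_{N-j-1}, \ldots$; here the eigenvalues decrease as $\theta_j, \theta_{j-1}, \ldots, \theta_0$. Once this directional bookkeeping is in hand, the remainder is a routine dimension count using Lemma \ref{cor:ki} and \eqref{eq:Eidim}.
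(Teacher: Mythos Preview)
Your proposal is correct and follows essentially the same approach as the paper: part (i) via Definition \ref{def:4b}, \eqref{eq:EsV}, Lemma \ref{lem:4b}, and a dimension count; part (ii) via repeated application of Lemma \ref{lem:actionud} to obtain $(A-\theta_0 I)\cdots(A-\theta_j I)y^{\uparrow\downarrow}=0$, followed by linear independence from Lemma \ref{lem:4b} and the dimension match from Lemma \ref{cor:ki} and \eqref{eq:Eidim}. Your explicit verification of the covering direction and the telescoping order is exactly the bookkeeping the paper leaves implicit.
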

\begin{proof} (i) Use   \eqref{eq:EsV} along with Definition \ref{def:4b} and Lemma \ref{lem:4b}.
\\
\noindent (ii) We claim  that the given vectors are contained in the given subspace. To prove the claim, pick $y \in X$ such that ${\rm dim}\,y \leq i$. 
For notational convenience, define $j={\rm dim}\,y$.
Using Lemma \ref{lem:actionud} repeatedly, we obtain
\begin{align*}
(A-\theta_{0}I)(A-\theta_{1}I) \cdots (A-\theta_{j}I) y^{\uparrow \downarrow} = 0.
\end{align*}
Consequently 
\begin{align*}
y^{\uparrow\downarrow} \in E_0V+E_1V+\cdots + E_{j}V \subseteq E_0V+E_1V+\cdots + E_{i}V.
\end{align*}
The claim is proved. By Lemma \ref{lem:4b} the given vectors are linearly independent. By Lemma  \ref{cor:ki} 
and  \eqref{eq:Eidim}, the
 number of given vectors is equal to the dimension of the
given subspace. The result follows from these comments and linear algebra.
\end{proof}

\begin{lemma} \label{lem:V4} For following hold for $0 \leq i \leq N$.
\begin{enumerate}
\item[\rm (i)] The vectors $\lbrace y^{\uparrow \uparrow} \vert y \in X, \; {\rm dim}\,y \geq i\rbrace$ form a basis for $E^*_NV+E^*_{N-1}V+\cdots + E^*_iV$.
\item[\rm (ii)] The vectors $\lbrace y^{\uparrow \uparrow} \vert y \in X, \; {\rm dim}\,y \leq i\rbrace$ form a basis for $E_NV+E_{N-1}V+\cdots + E_{N-i}V$.
\end{enumerate}
\end{lemma}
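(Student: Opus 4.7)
The plan is to deduce Lemma \ref{lem:V4} from Lemma \ref{lem:V3} by applying the matrix $S$ to every ingredient in that lemma, in complete analogy with how Lemma \ref{lem:V2} was derived from Lemma \ref{lem:V1}. This is the shortest route because all the symmetry relating the $\uparrow\downarrow$-basis to the $\uparrow\uparrow$-basis is already captured by $S$.

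First I would recall the two key facts I need. By Lemma \ref{lem:Sw}, for each $y \in X$ the matrix $S$ sends $y^{\uparrow\downarrow}$ to $y^{\uparrow\uparrow}$ (and this action is invertible since $S^2=I$ by Lemma \ref{lem:S}). By \eqref{eq:SEV}, $S$ fixes each subconstituent $E^*_iV$ setwise and interchanges $E_iV$ with $E_{N-i}V$.

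For part (i): apply $S$ to both sides of Lemma \ref{lem:V3}(i). On the vector side, $S$ carries $\{y^{\uparrow\downarrow} \mid \mathrm{dim}\,y \geq i\}$ bijectively to $\{y^{\uparrow\uparrow} \mid \mathrm{dim}\,y \geq i\}$, so the image is still a basis of the image subspace. On the subspace side, $S(E^*_NV + E^*_{N-1}V + \cdots + E^*_iV) = E^*_NV + E^*_{N-1}V + \cdots + E^*_iV$, again by \eqref{eq:SEV}. This yields (i).

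For part (ii): apply $S$ to Lemma \ref{lem:V3}(ii). The vector side becomes $\{y^{\uparrow\uparrow} \mid \mathrm{dim}\,y \leq i\}$, while the subspace side becomes $S(E_0V + E_1V + \cdots + E_iV) = E_NV + E_{N-1}V + \cdots + E_{N-i}V$ by the eigenspace relation $SE_jV = E_{N-j}V$. This yields (ii).

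There is no real obstacle here; the only thing to verify is that $S$, being an invertible linear map, carries a basis of a subspace $U$ to a basis of $SU$, which is elementary. If desired, one could alternatively give a direct proof that mirrors Lemma \ref{lem:V3}: part (i) would follow from \eqref{eq:EsV} together with Definition \ref{def:4b} and Lemma \ref{lem:4b}, while part (ii) would be established by showing via Lemma \ref{lem:actionuu} that $(A-\theta_NI)(A-\theta_{N-1}I)\cdots(A-\theta_{N-j}I)\,y^{\uparrow\uparrow} = 0$ for $y$ with $\mathrm{dim}\,y = j \leq i$ (noting the eigenvalue is $\theta_{N-j}$ in Lemma \ref{lem:actionuu}), then invoking Lemmas \ref{cor:ki} and \eqref{eq:Eidim} for a dimension count. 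But the $S$-symmetry proof is cleaner and matches the pattern already set by Lemma \ref{lem:V2}.
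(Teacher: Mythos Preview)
Your proposal is correct and follows essentially the same approach as the paper: the paper's proof simply says to apply $S$ to everything in Lemma~\ref{lem:V3} and evaluate using Lemma~\ref{lem:Sw} and \eqref{eq:SEV}, which is exactly what you do (with more detail).
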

\begin{proof} Apply $S$ to everything in Lemma \ref{lem:V3}, and evaluate the result using
Lemma \ref{lem:Sw} 
and
\eqref{eq:SEV}.
\end{proof}

\begin{definition}\label{def:u} For $0 \leq i \leq N$ we define
\begin{align*}
U^{\downarrow \downarrow}_i &= (E^*_0V+ E^*_1V+ \cdots + E^*_iV)\cap (E_0V + E_1V+ \cdots + E_{N-i}V), \\
U^{\downarrow \uparrow}_i &= (E^*_0V+ E^*_1V+ \cdots + E^*_iV)\cap (E_NV + E_{N-1}V+ \cdots + E_{i}V), \\
U^{\uparrow \downarrow}_i &= (E^*_NV+ E^*_{N-1}V+ \cdots + E^*_{N-i}V)\cap (E_0V + E_1V+ \cdots + E_{N-i}V), \\
U^{\uparrow \uparrow}_i &= (E^*_NV+ E^*_{N-1}V+ \cdots + E^*_{N-i}V)\cap (E_NV + E_{N-1}V+ \cdots + E_{i}V).
\end{align*}
\end{definition}

\begin{lemma} \label{lem:table} The following {\rm (i)--(iv)} hold 
for $0 \leq i \leq N$:
\begin{enumerate}
\item[\rm (i)] the vectors $\lbrace y^{\downarrow \downarrow}\rbrace_{y \in \Gamma_i({\bf 0})}$ form a basis for  $U^{\downarrow \downarrow}_i$;
\item[\rm (ii)]  the vectors $\lbrace y^{\downarrow \uparrow}\rbrace_{y \in \Gamma_i({\bf 0})}$ form a basis for  $U^{\downarrow \uparrow}_i$;
\item[\rm (iii)] the vectors $\lbrace y^{\uparrow \downarrow}\rbrace_{y \in \Gamma_{N-i}({\bf 0})}$ form a basis for  $U^{\uparrow \downarrow}_i$;
\item[\rm (iv)] the vectors $\lbrace y^{\uparrow \uparrow}\rbrace_{y \in \Gamma_{N-i}({\bf 0})}$ form a basis for $U^{\uparrow \uparrow}_i$.
\end{enumerate}
\end{lemma}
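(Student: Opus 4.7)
The plan is to prove all four parts by the same strategy: combine the two characterizations of a given split basis (one involving $E^*$-sums, the other involving $E$-sums) with the uniqueness of expansion of any vector of $V$ in that split basis (Lemma~\ref{lem:4b}).

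I will work through (i) in detail and then indicate how the remaining parts follow. For any $y \in \Gamma_i({\bf 0})$ we have $\dim y = i$, so Lemma~\ref{lem:V1}(i) gives $y^{\downarrow\downarrow} \in E^*_0V + \cdots + E^*_iV$ and Lemma~\ref{lem:V1}(ii) gives $y^{\downarrow\downarrow} \in E_0V + \cdots + E_{N-i}V$. Hence each such $y^{\downarrow\downarrow}$ lies in $U^{\downarrow\downarrow}_i$, and linear independence is inherited from Lemma~\ref{lem:4b}. To show these vectors span $U^{\downarrow\downarrow}_i$, pick $v \in U^{\downarrow\downarrow}_i$ and expand it uniquely as $v = \sum_{y \in X} c_y\, y^{\downarrow\downarrow}$. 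Since $v \in E^*_0V + \cdots + E^*_iV$, the basis in Lemma~\ref{lem:V1}(i) together with uniqueness forces $c_y = 0$ whenever $\dim y > i$; since $v \in E_0V + \cdots + E_{N-i}V$, the basis in Lemma~\ref{lem:V1}(ii) forces $c_y = 0$ whenever $\dim y < i$. Thus $v$ lies in the span of $\{y^{\downarrow\downarrow}\}_{y \in \Gamma_i({\bf 0})}$, proving (i).

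Parts (ii)--(iv) are handled identically, invoking Lemmas~\ref{lem:V2},~\ref{lem:V3},~\ref{lem:V4} respectively; alternatively (ii) can be deduced from (i) and (iv) from (iii) by applying $S$ (using Lemma~\ref{lem:Sw} and \eqref{eq:SEV}, noting that $S$ preserves each $E^*_jV$ and swaps $E_jV \leftrightarrow E_{N-j}V$, hence preserves $U^{\downarrow\downarrow}_i$'s analogues). The only point requiring care is the index book-keeping for (iii) and (iv): for (iii), Lemma~\ref{lem:V3}(i) applied with threshold $N-i$ gives a basis $\{y^{\uparrow\downarrow}\}_{\dim y \geq N-i}$ of $E^*_NV + \cdots + E^*_{N-i}V$, and Lemma~\ref{lem:V3}(ii) with threshold $N-i$ gives a basis $\{y^{\uparrow\downarrow}\}_{\dim y \leq N-i}$ of $E_0V + \cdots + E_{N-i}V$; the two conditions together force $\dim y = N-i$, yielding $\Gamma_{N-i}({\bf 0})$ as claimed.

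There is no serious obstacle; the argument reduces to the same uniqueness-of-expansion trick applied four times. The only thing to be vigilant about is keeping the index shifts straight across the four parts.
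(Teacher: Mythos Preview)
Your proof is correct and follows essentially the same approach as the paper: the paper's proof simply says ``compare the two assertions in Lemma~\ref{lem:V1}'' (and analogously for Lemmas~\ref{lem:V2}--\ref{lem:V4}), which is exactly the uniqueness-of-expansion argument you have spelled out in detail. The only difference is verbosity; the underlying idea---intersecting the two basis descriptions from parts (i) and (ii) of each of Lemmas~\ref{lem:V1}--\ref{lem:V4} via the global basis of Lemma~\ref{lem:4b}---is identical.
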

\begin{proof} Recall Lemma \ref{lem:Lqn}(i) and Lemma \ref{lem:4b}. \\
\noindent (i) Compare  the two assertions in Lemma \ref{lem:V1}.\\
\noindent (ii) Compare  the two assertions in Lemma \ref{lem:V2}.\\
\noindent (iii) Compare  the two assertions in Lemma \ref{lem:V3}.\\
\noindent (iv) Compare  the two assertions in Lemma \ref{lem:V4}.
\end{proof}

\begin{lemma} For $0 \leq i \leq N$ the following subspaces have dimension $\binom{N}{i}_q$:
\begin{align*}
 U^{\downarrow \downarrow}_i, \qquad  U^{\downarrow \uparrow}_i, \qquad   U^{\uparrow \downarrow}_i, \qquad  U^{\uparrow \uparrow}_i.
\end{align*}
\end{lemma}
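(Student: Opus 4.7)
The plan is to read the dimensions directly off Lemma \ref{lem:table}, using Lemma \ref{cor:ki} to count the basis elements. There is essentially no new content beyond combining those two results.

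First I would handle $U^{\downarrow\downarrow}_i$ and $U^{\downarrow\uparrow}_i$. By Lemma \ref{lem:table}(i),(ii), each of these subspaces has a basis indexed by $\Gamma_i(\mathbf{0})$, and by Lemma \ref{cor:ki} we have $|\Gamma_i(\mathbf{0})|=\binom{N}{i}_q$. Hence both subspaces have dimension $\binom{N}{i}_q$.

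Next I would handle $U^{\uparrow\downarrow}_i$ and $U^{\uparrow\uparrow}_i$. By Lemma \ref{lem:table}(iii),(iv), each has a basis indexed by $\Gamma_{N-i}(\mathbf{0})$. Lemma \ref{cor:ki} gives $|\Gamma_{N-i}(\mathbf{0})|=\binom{N}{N-i}_q$, and the symmetry $\binom{N}{N-i}_q=\binom{N}{i}_q$ of the Gaussian binomial coefficient (immediate from its definition in terms of $[n]^!_q$) completes the count.

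There is no real obstacle here; the statement is a one-line corollary of the preceding lemma, and the only mild point worth noting is the symmetry of $\binom{N}{i}_q$ needed for the two $\uparrow$-flavored subspaces.

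\begin{proof}
Combine Lemma \ref{lem:table} with Lemma \ref{cor:ki}. By Lemma \ref{lem:table}(i),(ii), each of $U^{\downarrow\downarrow}_i$ and $U^{\downarrow\uparrow}_i$ has a basis indexed by $\Gamma_i(\mathbf{0})$, which has cardinality $\binom{N}{i}_q$. By Lemma \ref{lem:table}(iii),(iv), each of $U^{\uparrow\downarrow}_i$ and $U^{\uparrow\uparrow}_i$ has a basis indexed by $\Gamma_{N-i}(\mathbf{0})$, which has cardinality $\binom{N}{N-i}_q=\binom{N}{i}_q$.
\end{proof}
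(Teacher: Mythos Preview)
Your proof is correct and is essentially identical to the paper's own argument, which reads: ``By Lemma \ref{cor:ki} and Lemma \ref{lem:table} along with $\binom{N}{i}_q = \binom{N}{N-i}_q$.'' You have simply unpacked this one-line citation into its two cases.
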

\begin{proof} By Lemma
 \ref{cor:ki} and Lemma \ref{lem:table} along with $\binom{N}{i}_q = \binom{N}{N-i}_q$.
 \end{proof}

\begin{lemma}\label{lem:4dec} Each of the following is a decomposition of $V$:
\begin{align*}
\lbrace U^{\downarrow \downarrow}_i \rbrace_{i=0}^N, \qquad \quad
\lbrace U^{\downarrow \uparrow}_i \rbrace_{i=0}^N, \qquad  \quad
\lbrace U^{\uparrow \downarrow}_i \rbrace_{i=0}^N, \qquad \quad
\lbrace U^{\uparrow \uparrow}_i \rbrace_{i=0}^N.
\end{align*}
\end{lemma}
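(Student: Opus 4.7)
The plan is to deduce this directly from Lemma \ref{lem:table} together with Lemma \ref{lem:4b}. For each of the four families, Lemma \ref{lem:table} identifies a basis for each $U$-subspace indexed by $i$, namely the appropriate split vectors labelled by vertices at distance $i$ (or $N-i$) from $\mathbf{0}$. The union of these bases over $0 \le i \le N$ is exactly one of the four split bases of $V$ from Lemma \ref{lem:4b}, since by Lemma \ref{lem:Lqn}(i) the sets $\Gamma_i(\mathbf 0)$ partition $X$.

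First I would treat $\{U^{\downarrow\downarrow}_i\}_{i=0}^N$. By Lemma \ref{lem:table}(i), a basis for $U^{\downarrow\downarrow}_i$ is $\{y^{\downarrow\downarrow}\}_{y \in \Gamma_i(\mathbf 0)}$. These subspaces are nonzero because $|\Gamma_i(\mathbf 0)| = \binom{N}{i}_q \geq 1$ by Lemma \ref{cor:ki}. Taking the union over $i$ and using that $X = \bigcup_{i=0}^N \Gamma_i(\mathbf 0)$ (disjoint union) by Lemma \ref{lem:Lqn}(i),(ii), we obtain the basis $\{y^{\downarrow\downarrow}\}_{y \in X}$ of $V$ from Lemma \ref{lem:4b}. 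Hence $V = \sum_{i=0}^N U^{\downarrow\downarrow}_i$, and the sum is direct because the union of bases is itself linearly independent. The three remaining cases follow in exactly the same way, using parts (ii), (iii), (iv) of Lemma \ref{lem:table} together with the corresponding split basis in Lemma \ref{lem:4b}; for the $\uparrow\downarrow$ and $\uparrow\uparrow$ families one uses that $\{\Gamma_{N-i}(\mathbf 0)\}_{i=0}^N$ is merely a reindexing of the partition of $X$ by distance from $\mathbf 0$.

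There is essentially no obstacle here: the content of the lemma is entirely carried by Lemma \ref{lem:table}, which provides the explicit bases. The only thing to check is the bookkeeping that the chosen index sets $\Gamma_i(\mathbf 0)$ (respectively $\Gamma_{N-i}(\mathbf 0)$) cover $X$ exactly once as $i$ ranges over $\{0,1,\dots,N\}$, which is immediate from Lemma \ref{lem:Lqn}(i),(ii).
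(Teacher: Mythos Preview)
Your argument is correct and is exactly the paper's approach: the paper's proof simply reads ``By Lemmas~\ref{lem:4b}, \ref{lem:table},'' and you have spelled out precisely how those two lemmas combine, including the bookkeeping that $\{\Gamma_i(\mathbf 0)\}_{i=0}^N$ partitions $X$.
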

\begin{proof} By Lemmas  \ref{lem:4b}, \ref{lem:table}.
\end{proof}

\noindent The following definition is motivated by \cite[Section 4]{itt} and \cite[Section~5]{splitDec}; see also \cite{qtet, kim1, kim2}.

\begin{definition}\rm The decompositions of $V$ from Lemma \ref{lem:4dec} are said to be {\it split}.
\end{definition}

\begin{lemma} \label{lem:Ss} For $0 \leq i \leq N$ we have
\begin{align*}
S U^{\downarrow \downarrow}_i = U^{\downarrow \uparrow}_i,  \qquad \quad
S U^{\downarrow \uparrow}_i = U^{\downarrow \downarrow}_i, \qquad \quad 
S U^{\uparrow \downarrow}_i = U^{\uparrow \uparrow}_i, \qquad \quad
S U^{\uparrow \uparrow}_i = U^{\uparrow \downarrow}_i.
\end{align*}
\end{lemma}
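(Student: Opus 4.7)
The plan is to read off each of the four identities directly from Lemma \ref{lem:Sw} together with Lemma \ref{lem:table}; the proof amounts to checking that $S$ sends a basis of the source subspace to a basis of the target subspace.

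I would first handle $S U^{\downarrow \downarrow}_i = U^{\downarrow \uparrow}_i$. By Lemma \ref{lem:table}(i), the vectors $\lbrace y^{\downarrow \downarrow} \rbrace_{y \in \Gamma_i({\bf 0})}$ form a basis for $U^{\downarrow \downarrow}_i$. By Lemma \ref{lem:Sw}, $S y^{\downarrow \downarrow} = y^{\downarrow \uparrow}$ for every $y \in X$, so $S$ sends this basis to $\lbrace y^{\downarrow \uparrow} \rbrace_{y \in \Gamma_i({\bf 0})}$. By Lemma \ref{lem:table}(ii), this latter set is a basis for $U^{\downarrow \uparrow}_i$. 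Since $S^2=I$ (Lemma \ref{lem:S}), $S$ is invertible, so $S U^{\downarrow \downarrow}_i = U^{\downarrow \uparrow}_i$. The identity $S U^{\downarrow \uparrow}_i = U^{\downarrow \downarrow}_i$ then follows either by applying $S$ to both sides and using $S^2=I$, or by running the same argument in the opposite direction using Lemma \ref{lem:Sw}.

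The remaining two identities are handled identically. For $S U^{\uparrow \downarrow}_i = U^{\uparrow \uparrow}_i$, use Lemma \ref{lem:table}(iii), (iv): the set $\lbrace y^{\uparrow \downarrow} \rbrace_{y \in \Gamma_{N-i}({\bf 0})}$ is a basis for $U^{\uparrow \downarrow}_i$, and applying $S$ term by term via $S y^{\uparrow \downarrow} = y^{\uparrow \uparrow}$ (Lemma \ref{lem:Sw}) yields the basis $\lbrace y^{\uparrow \uparrow} \rbrace_{y \in \Gamma_{N-i}({\bf 0})}$ of $U^{\uparrow \uparrow}_i$. The final equality $S U^{\uparrow \uparrow}_i = U^{\uparrow \downarrow}_i$ follows from $S^2 = I$.

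There is no real obstacle; the substance of the lemma has already been absorbed into the basis statements of Lemma \ref{lem:table} and the swap rule in Lemma \ref{lem:Sw}. As an alternative one could avoid bases entirely and argue from Definition \ref{def:u} using \eqref{eq:SEV}, which gives $S(E^*_jV) = E^*_jV$ and $S(E_jV) = E_{N-j}V$; since $S$ is invertible it preserves intersections, and applying it to the defining intersection of $U^{\downarrow \downarrow}_i$ converts the second factor $E_0V + \cdots + E_{N-i}V$ into $E_NV + \cdots + E_iV$ while leaving the first factor fixed, producing $U^{\downarrow \uparrow}_i$; the other three identities follow the same pattern.
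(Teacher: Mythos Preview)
Your proof is correct and takes essentially the same approach as the paper, which simply cites Lemmas \ref{lem:Sw} and \ref{lem:table}. You have merely unpacked those citations explicitly, and your alternative argument via Definition \ref{def:u} and \eqref{eq:SEV} is also valid.
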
 
\begin{proof} By Lemmas  \ref{lem:Sw}, \ref{lem:table}.
\end{proof}

\begin{lemma} \label{lem:3one} For $0 \leq i \leq N$ the following are equal:
\begin{align*}
E^*_0V+ E^*_1V+\cdots + E^*_iV, \qquad  U^{\downarrow \downarrow}_0 + U^{\downarrow \downarrow}_1 + \cdots + U^{\downarrow \downarrow}_i, \qquad
 U^{\downarrow \uparrow}_0 + U^{\downarrow \uparrow}_1 + \cdots + U^{\downarrow \uparrow}_i.
\end{align*}
\end{lemma}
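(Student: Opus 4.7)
The plan is to show all three subspaces are spanned by the same explicit set of vectors, drawing on the bases already computed in Lemmas \ref{lem:table}, \ref{lem:V1}, and \ref{lem:V2}. By Lemma \ref{lem:Lqn}(i), for $y \in X$ we have $\dim y = j$ if and only if $y \in \Gamma_j({\bf 0})$, so the index sets $\Gamma_j({\bf 0})$ $(0 \leq j \leq i)$ are pairwise disjoint and their union is $\{y \in X : \dim y \leq i\}$.

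First I would compare the first two subspaces. By Lemma \ref{lem:table}(i), for each $0 \leq j \leq i$ the vectors $\{y^{\downarrow\downarrow}\}_{y \in \Gamma_j({\bf 0})}$ form a basis for $U^{\downarrow\downarrow}_j$. Concatenating these bases over $j$, the vectors $\{y^{\downarrow\downarrow} : y \in X,\; \dim y \leq i\}$ span $U^{\downarrow\downarrow}_0 + U^{\downarrow\downarrow}_1 + \cdots + U^{\downarrow\downarrow}_i$, and they are linearly independent by Lemma \ref{lem:4b}, so they form a basis for this sum. But by Lemma \ref{lem:V1}(i) the very same vectors form a basis for $E^*_0V + E^*_1V + \cdots + E^*_iV$. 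Hence the first two subspaces coincide.

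Next I would repeat this argument with $\downarrow\uparrow$ in place of $\downarrow\downarrow$, using Lemma \ref{lem:table}(ii) (to get bases of the $U^{\downarrow\uparrow}_j$) together with Lemma \ref{lem:V2}(i) (to identify the union of these bases as a basis of $E^*_0V + \cdots + E^*_iV$). This gives equality of the first and third subspaces, completing the proof.

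There is no real obstacle here; the work has all been done in the preceding lemmas, and this proposition is essentially a bookkeeping assembly of them. One minor point to state cleanly is the disjointness of the index sets $\Gamma_j({\bf 0})$, which ensures that concatenating the bases in Lemma \ref{lem:table}(i) (resp.\ (ii)) does produce a linearly independent spanning set for the corresponding sum of the $U^{\downarrow\downarrow}_j$ (resp.\ $U^{\downarrow\uparrow}_j$) rather than merely a spanning set.
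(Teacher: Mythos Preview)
Your proposal is correct and follows essentially the same approach as the paper's proof: both show equality by exhibiting a common basis, invoking Lemma~\ref{lem:V1}(i) and Lemma~\ref{lem:table}(i) for the $\downarrow\downarrow$ case, and Lemma~\ref{lem:V2}(i) and Lemma~\ref{lem:table}(ii) for the $\downarrow\uparrow$ case. You are slightly more explicit than the paper in justifying why the concatenated vectors form a basis of the sum $U^{\downarrow\downarrow}_0+\cdots+U^{\downarrow\downarrow}_i$ (appealing to Lemma~\ref{lem:4b} for linear independence), but this is only a minor difference in exposition.
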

\begin{proof} By Lemma \ref{lem:V1}(i)
the vectors $\lbrace y^{\downarrow \downarrow} \vert {\dim}\,y \leq i\rbrace$ form a basis for $E^*_0V+\cdots + E^*_iV$. By Lemma \ref{lem:table}(i)
the vectors $\lbrace y^{\downarrow \downarrow} \vert {\dim}\,y \leq i\rbrace$ form a basis for 
$U^{\downarrow \downarrow}_0 + \cdots + U^{\downarrow \downarrow}_i$. By these comments
$E^*_0V+\cdots + E^*_iV=U^{\downarrow \downarrow}_0  + \cdots + U^{\downarrow \downarrow}_i$.
 By Lemma \ref{lem:V2}(i)
the vectors $\lbrace y^{\downarrow \uparrow} \vert {\dim}\,y \leq i\rbrace$ form a basis for $E^*_0V+\cdots + E^*_iV$. By Lemma \ref{lem:table}(ii)
the vectors $\lbrace y^{\downarrow \uparrow} \vert {\dim}\,y \leq i\rbrace$ form a basis for 
$U^{\downarrow \uparrow}_0 + \cdots + U^{\downarrow \uparrow}_i$. By these comments
$E^*_0V+\cdots + E^*_iV=U^{\downarrow \uparrow}_0  + \cdots + U^{\downarrow \uparrow}_i$.
\end{proof}

\begin{lemma} \label{lem:3two}  For $0 \leq i \leq N$ the following are equal:
\begin{align*}
E^*_NV+ E^*_{N-1}V+\cdots + E^*_{N-i}V, \qquad  U^{\uparrow \downarrow}_0 + U^{\uparrow \downarrow}_1 + \cdots + U^{\uparrow \downarrow}_i, \qquad
 U^{\uparrow \uparrow}_0 + U^{\uparrow \uparrow}_1 + \cdots + U^{\uparrow \uparrow}_i.
\end{align*}
\end{lemma}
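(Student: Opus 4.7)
The plan is to mirror the proof of Lemma \ref{lem:3one}, using the $\uparrow\downarrow$ and $\uparrow\uparrow$ split bases in place of the $\downarrow\downarrow$ and $\downarrow\uparrow$ ones. The strategy is to show that each of the three subspaces admits a common spanning set of split basis vectors, and since those vectors are linearly independent (Lemma \ref{lem:4b}), the three subspaces must coincide.

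First I would handle the equality of $E^*_NV+\cdots+E^*_{N-i}V$ with the sum of the $U^{\uparrow\downarrow}_j$. By Lemma \ref{lem:V3}(i), the vectors $\lbrace y^{\uparrow\downarrow}\,\vert\, y \in X,\;\dim y \geq N-i\rbrace$ form a basis for $E^*_NV+E^*_{N-1}V+\cdots+E^*_{N-i}V$. On the other hand, by Lemma \ref{lem:table}(iii), for each $j$ with $0 \leq j \leq i$ the vectors $\lbrace y^{\uparrow\downarrow}\rbrace_{y\in\Gamma_{N-j}({\bf 0})}$ form a basis for $U^{\uparrow\downarrow}_j$; taking the union over $0\leq j \leq i$ (and using Lemma \ref{lem:Lqn}(i) to identify $\Gamma_{N-j}({\bf 0})$ with $\{y \in X \,\vert\,\dim y = N-j\}$) yields exactly the same collection $\lbrace y^{\uparrow\downarrow}\,\vert\, \dim y \geq N-i\rbrace$. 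Hence both subspaces have this common basis and are equal.

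The second equality, involving the sum of the $U^{\uparrow\uparrow}_j$, is handled in the same way: Lemma \ref{lem:V4}(i) gives the basis $\lbrace y^{\uparrow\uparrow}\,\vert\, \dim y \geq N-i\rbrace$ for $E^*_NV+\cdots+E^*_{N-i}V$, while Lemma \ref{lem:table}(iv) assembles the same collection as a basis for $U^{\uparrow\uparrow}_0+\cdots+U^{\uparrow\uparrow}_i$.

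There is no real obstacle here; the only care needed is the bookkeeping between the ``$\dim y \geq N-i$'' condition appearing in Lemmas \ref{lem:V3}(i) and \ref{lem:V4}(i) and the indexing $y \in \Gamma_{N-j}({\bf 0})$ in Lemma \ref{lem:table}(iii), (iv). Alternatively, one can avoid even this small bookkeeping by applying $S$ to every subspace appearing in Lemma \ref{lem:3one} and invoking Lemmas \ref{lem:Sw} and \ref{lem:Ss} together with $SE^*_iV = E^*_iV$ from \eqref{eq:SEV}; this would deduce Lemma \ref{lem:3two} from Lemma \ref{lem:3one} in one step, but would require reindexing $i \mapsto N-i$ on the $E^*$-side, so I prefer the direct argument above for clarity.
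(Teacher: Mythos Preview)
Your direct argument is correct and essentially identical to the paper's own proof, which likewise combines Lemma \ref{lem:V3}(i) with Lemma \ref{lem:table}(iii) for the first equality and Lemma \ref{lem:V4}(i) with Lemma \ref{lem:table}(iv) for the second.

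One small correction to your closing remark: the alternative via $S$ does not actually work. By Lemma \ref{lem:Ss}, the matrix $S$ swaps $U^{\downarrow\downarrow}_j \leftrightarrow U^{\downarrow\uparrow}_j$ and $U^{\uparrow\downarrow}_j \leftrightarrow U^{\uparrow\uparrow}_j$, but it never carries the $\downarrow\cdot$ family to the $\uparrow\cdot$ family. Applying $S$ to Lemma \ref{lem:3one} therefore just reproduces Lemma \ref{lem:3one} with the two $U$-sums interchanged; no reindexing on the $E^*$-side can repair this. The direct argument you gave is the right (and the paper's) route.
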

\begin{proof} The proof is similar to the proof of Lemma \ref{lem:3one}.
 By Lemma \ref{lem:V3}(i)
the vectors $\lbrace y^{\uparrow \downarrow} \vert {\dim}\,y \geq N-i\rbrace$ form a basis for $E^*_NV+\cdots + E^*_{N-i}V$. By Lemma \ref{lem:table}(iii)
the vectors $\lbrace y^{\uparrow \downarrow} \vert {\dim}\,y \geq N-i\rbrace$ form a basis for 
$U^{\uparrow \downarrow}_0 + \cdots + U^{\uparrow \downarrow}_i$. By these comments
$E^*_NV+\cdots + E^*_{N-i}V=U^{\uparrow \downarrow}_0  + \cdots + U^{\uparrow \downarrow}_i$.
 By Lemma \ref{lem:V4}(i)
the vectors $\lbrace y^{\uparrow \uparrow} \vert {\dim}\,y \geq N-i\rbrace$ form a basis for $E^*_NV+\cdots + E^*_{N-i}V$. By Lemma \ref{lem:table}(iv)
the vectors $\lbrace y^{\uparrow \uparrow} \vert {\dim}\,y \geq N-i\rbrace$ form a basis for 
$U^{\uparrow \uparrow}_0 + \cdots + U^{\uparrow \uparrow}_i$. By these comments
$E^*_NV+\cdots + E^*_{N-i}V=U^{\uparrow \uparrow}_0  + \cdots + U^{\uparrow \uparrow}_i$.
\end{proof}

\begin{lemma} \label{lem:3three} For $0 \leq i \leq N$ the following are equal:
\begin{align*}
E_0V+ E_1V+\cdots + E_iV, \qquad  U^{\downarrow \downarrow}_N + U^{\downarrow \downarrow}_{N-1} + \cdots + U^{\downarrow \downarrow}_{N-i}, \qquad
 U^{\uparrow \downarrow}_N + U^{\uparrow \downarrow}_{N-1} + \cdots + U^{\uparrow \downarrow}_{N-i}.
\end{align*}
\end{lemma}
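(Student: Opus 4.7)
The plan is to mimic the strategy used in the proofs of Lemma \ref{lem:3one} and Lemma \ref{lem:3two}: exhibit a common basis for the left-hand sum and for each of the two right-hand sums, using the already-established bases coming from the $V_k$-lemmas on the one hand and from Lemma \ref{lem:table} on the other.

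For the first equality, I would invoke Lemma \ref{lem:V1}(ii) with its parameter taken to be $N-i$: this tells me that the vectors $\lbrace y^{\downarrow\downarrow} : {\rm dim}\,y \geq N-i\rbrace$ form a basis for $E_0V+E_1V+\cdots+E_iV$. On the $U^{\downarrow\downarrow}$ side, Lemma \ref{lem:table}(i) (together with Lemma \ref{lem:Lqn}(i) which identifies $\Gamma_k({\bf 0})$ with the set of $k$-dimensional subspaces) says that the vectors $\lbrace y^{\downarrow\downarrow} : {\rm dim}\,y = k\rbrace$ form a basis for $U^{\downarrow\downarrow}_k$. Summing over $k$ from $N-i$ to $N$ produces precisely the basis $\lbrace y^{\downarrow\downarrow} : {\rm dim}\,y \geq N-i\rbrace$ for $U^{\downarrow\downarrow}_{N-i}+U^{\downarrow\downarrow}_{N-i+1}+\cdots+U^{\downarrow\downarrow}_N$. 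Since both subspaces have the same basis, they coincide.

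For the second equality, I would proceed identically but use the $(\uparrow\downarrow)$ split basis. Lemma \ref{lem:V3}(ii) says the vectors $\lbrace y^{\uparrow\downarrow} : {\rm dim}\,y \leq i\rbrace$ form a basis for $E_0V+E_1V+\cdots+E_iV$. Lemma \ref{lem:table}(iii) says the vectors $\lbrace y^{\uparrow\downarrow} : {\rm dim}\,y = N-k\rbrace$ form a basis for $U^{\uparrow\downarrow}_k$. The condition ${\rm dim}\,y \leq i$ translates to $N-k \leq i$, i.e.\ $k \geq N-i$, so summing over $k$ from $N-i$ to $N$ gives the same collection of vectors as a basis for $U^{\uparrow\downarrow}_{N-i}+U^{\uparrow\downarrow}_{N-i+1}+\cdots+U^{\uparrow\downarrow}_N$, establishing the second equality.

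There is no genuine obstacle here; the only thing that could trip one up is keeping the indexing under control, since the reversal of the summation order (``$U_N+U_{N-1}+\cdots+U_{N-i}$'' rather than ``$U_0+U_1+\cdots+U_i$'') must line up correctly with the direction of the inequalities ${\rm dim}\,y \geq N-i$ and ${\rm dim}\,y \leq i$ coming from Lemmas \ref{lem:V1}(ii) and \ref{lem:V3}(ii). An alternative route, should one prefer it, would be to apply the matrix $S$ to Lemma \ref{lem:3two}, using $SE_iV=E_{N-i}V$ from \eqref{eq:SEV} together with the $S$-action on the split decompositions recorded in Lemma \ref{lem:Ss}; but the direct basis argument above is the most transparent.
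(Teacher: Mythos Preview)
Your main argument is correct and is essentially identical to the paper's own proof: the paper invokes Lemma~\ref{lem:V1}(ii) and Lemma~\ref{lem:table}(i) to match the $\downarrow\downarrow$ bases, then Lemma~\ref{lem:V3}(ii) and Lemma~\ref{lem:table}(iii) to match the $\uparrow\downarrow$ bases, exactly as you do.

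One small correction to your closing aside: the alternative route via $S$ applied to Lemma~\ref{lem:3two} does not actually work. Since $SE^*_iV=E^*_iV$ and $S$ swaps $U^{\uparrow\downarrow}_k$ with $U^{\uparrow\uparrow}_k$ (Lemma~\ref{lem:Ss}), applying $S$ to Lemma~\ref{lem:3two} merely permutes its last two displayed sums and reproduces the same statement; it never produces anything involving the eigenspaces $E_iV$. The genuine $S$-pairing is between Lemma~\ref{lem:3three} and Lemma~\ref{lem:3four} (and indeed that is how the paper proves Lemma~\ref{lem:3four}). This does not affect your primary proof, which stands on its own.
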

\begin{proof} The proof is similar to the proof of Lemma \ref{lem:3one}.
 By Lemma \ref{lem:V1}(ii)
the vectors $\lbrace y^{\downarrow \downarrow} \vert {\dim}\,y \geq N-i\rbrace$ form a basis for $E_0V+\cdots + E_{i}V$. By Lemma \ref{lem:table}(i)
the vectors $\lbrace y^{\downarrow \downarrow} \vert {\dim}\,y \geq N-i\rbrace$ form a basis for 
$U^{\downarrow \downarrow}_N + \cdots + U^{\downarrow \downarrow}_{N-i}$. By these comments
$E_0V+\cdots + E_iV=U^{\downarrow \downarrow}_N  + \cdots + U^{\downarrow \downarrow}_{N-i}$.
 By Lemma \ref{lem:V3}(ii)
the vectors $\lbrace y^{\uparrow \downarrow} \vert {\dim}\,y \leq i\rbrace$ form a basis for $E_0V+\cdots + E_iV$. By Lemma \ref{lem:table}(iii)
the vectors $\lbrace y^{\uparrow \downarrow} \vert {\dim}\,y \leq i\rbrace$ form a basis for 
$U^{\uparrow \downarrow}_N + \cdots + U^{\uparrow \downarrow}_{N-i}$. By these comments
$E_0V+\cdots + E_iV=U^{\uparrow \downarrow}_N  + \cdots + U^{\uparrow \downarrow}_{N-i}$.
\end{proof}

\begin{lemma} \label{lem:3four} For $0 \leq i \leq N$ the following are equal:
\begin{align*}
E_NV+ E_{N-1}V+\cdots + E_{N-i}V, \qquad  U^{\downarrow \uparrow}_N + U^{\downarrow \uparrow}_{N-1} + \cdots + U^{\downarrow \uparrow}_{N-i}, \qquad
 U^{\uparrow \uparrow}_N + U^{\uparrow \uparrow}_{N-1} + \cdots + U^{\uparrow \uparrow}_{N-i}.
\end{align*}
\end{lemma}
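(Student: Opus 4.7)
The plan is to derive this result by applying the parity operator $S$ to the three equal subspaces in the preceding lemma (\ref{lem:3three}), in exact analogy with how Lemma \ref{lem:3two} is obtained from Lemma \ref{lem:3one} (and how Lemmas \ref{lem:V2}, \ref{lem:V4} are obtained from Lemmas \ref{lem:V1}, \ref{lem:V3}, respectively). Concretely, I would fix $i$ with $0 \le i \le N$ and apply the invertible operator $S$ to each of the three equal subspaces
\begin{align*}
E_0V+\cdots+E_iV, \qquad U^{\downarrow\downarrow}_N+\cdots+U^{\downarrow\downarrow}_{N-i}, \qquad U^{\uparrow\downarrow}_N+\cdots+U^{\uparrow\downarrow}_{N-i}.
\end{align*}

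Since $S$ is linear and invertible, it preserves equalities of subspaces. To evaluate each image I would use the transformation rules already proved: from \eqref{eq:SEV} we have $S E_jV = E_{N-j}V$, so the image of $\sum_{j=0}^{i} E_jV$ is $\sum_{j=0}^{i} E_{N-j}V = E_NV+E_{N-1}V+\cdots+E_{N-i}V$, matching the left entry of the claimed equality. From Lemma \ref{lem:Ss}, we have $S U^{\downarrow\downarrow}_k = U^{\downarrow\uparrow}_k$ and $S U^{\uparrow\downarrow}_k = U^{\uparrow\uparrow}_k$ for each $k$, so the images of the middle and right sums become precisely the middle and right subspaces appearing in the statement of the lemma.

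As an alternative (or sanity check), one could give a direct argument parallel to Lemma \ref{lem:3one}: by Lemma \ref{lem:V2}(ii) the vectors $\{y^{\downarrow\uparrow} \mid \dim y \ge N-i\}$ form a basis for $E_NV+\cdots+E_{N-i}V$, while by Lemma \ref{lem:table}(ii) the same vectors (grouped by dimension) form a basis for $U^{\downarrow\uparrow}_N+\cdots+U^{\downarrow\uparrow}_{N-i}$; an analogous argument with $y^{\uparrow\uparrow}$ using Lemmas \ref{lem:V4}(ii) and \ref{lem:table}(iv) handles the third subspace. There is no real obstacle here: this is essentially bookkeeping, and the only thing to be careful about is matching the index shift $j \leftrightarrow N-j$ correctly when $S$ is applied (since the $E_i$'s get reversed but the $E^*_i$'s and $U^{\ast\ast}_i$'s do not). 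Given how the previous lemmas have been organized, the two-line derivation via $S$ from Lemma \ref{lem:3three} is the cleanest presentation.
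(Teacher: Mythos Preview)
Your proposal is correct and matches the paper's own proof exactly: the paper also applies $S$ to the three equal subspaces in Lemma~\ref{lem:3three} and evaluates using \eqref{eq:SEV} and Lemma~\ref{lem:Ss}. Your alternative direct argument via Lemmas~\ref{lem:V2}(ii), \ref{lem:V4}(ii), and \ref{lem:table} is also valid, though the paper opts for the shorter $S$-route.
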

\begin{proof} Apply $S$ to everything in Lemma \ref{lem:3three}, and evaluate the result using \eqref{eq:SEV}
and Lemma  \ref{lem:Ss}.
\end{proof}

\noindent We make a definition for notational convenience. For a decomposition $\lbrace U_i \rbrace_{i=0}^N$ of $V$, define $U_{-1}=0$ and $U_{N+1}=0$.

\begin{lemma} \label{lem:AU} For $0 \leq i \leq N$ we have
\begin{align} \label{eq:in1}
(A-\theta_{N-i}I) U^{\downarrow \downarrow}_i I &\subseteq U^{\downarrow \downarrow}_{i+1}, \qquad \qquad \quad
(A^*-\theta^*_i I) U^{\downarrow \downarrow}_i \subseteq U^{\downarrow \downarrow}_{i-1},\\
\label{eq:in2}
(A-\theta_{i}I) U^{\downarrow \uparrow}_i &\subseteq U^{\downarrow \uparrow}_{i+1}, \qquad \qquad \quad
(A^*-\theta^*_i I) U^{\downarrow \uparrow}_i \subseteq U^{\downarrow \uparrow}_{i-1},\\
\label{eq:in3}
(A-\theta_{N-i}I) U^{\uparrow \downarrow}_i &\subseteq U^{\uparrow \downarrow}_{i+1}, \qquad \qquad
(A^*-\theta^*_{N-i} I) U^{\uparrow \downarrow}_i \subseteq U^{\uparrow \downarrow}_{i-1},\\
\label{eq:in4}
(A-\theta_{i}I) U^{\uparrow \uparrow}_i &\subseteq U^{\uparrow \uparrow}_{i+1}, \qquad \qquad
(A^*-\theta^*_{N-i} I) U^{\uparrow \uparrow}_i \subseteq U^{\uparrow \uparrow}_{i-1}.
\end{align}
\end{lemma}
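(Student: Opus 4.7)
The plan is to reduce all four lines to the explicit action formulas of Section 6 together with the basis description in Lemma \ref{lem:table}, and then exploit the symmetry operator $S$ to cut the work roughly in half. Throughout, recall from Lemma \ref{lem:table} that $U^{\downarrow\downarrow}_i$ and $U^{\downarrow\uparrow}_i$ have bases indexed by $\Gamma_i(\mathbf{0})$, while $U^{\uparrow\downarrow}_i$ and $U^{\uparrow\uparrow}_i$ have bases indexed by $\Gamma_{N-i}(\mathbf{0})$.

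First I would verify \eqref{eq:in1} directly. Fix $y \in \Gamma_i(\mathbf{0})$. By Lemma \ref{lem:actiondd},
\[
(A-\theta_{N-i}I)\,y^{\downarrow\downarrow} \;=\; \sum_{z\;{\rm covers}\;y} z^{\downarrow\downarrow},
\qquad
(A^*-\theta^*_i I)\,y^{\downarrow\downarrow} \;=\; (q-1)q^{-i}\!\!\sum_{y\;{\rm covers}\;z}\! z^{\downarrow\downarrow}.
\]
Each $z$ appearing on the right has ${\rm dim}\,z=i+1$ in the first sum and ${\rm dim}\,z=i-1$ in the second, so by Lemma \ref{lem:table}(i) these vectors lie in $U^{\downarrow\downarrow}_{i+1}$ and $U^{\downarrow\downarrow}_{i-1}$ respectively (with the convention that the relevant sum is empty at the boundary). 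Since the $y^{\downarrow\downarrow}$ span $U^{\downarrow\downarrow}_i$, the two inclusions of \eqref{eq:in1} follow.

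Next I would handle \eqref{eq:in3} by the same bookkeeping, being careful that now the basis vectors of $U^{\uparrow\downarrow}_i$ come from $\Gamma_{N-i}(\mathbf{0})$. Fix $y \in \Gamma_{N-i}(\mathbf{0})$ and apply Lemma \ref{lem:actionud} (with the index there equal to $N-i$). One obtains
\[
(A-\theta_{N-i}I)\,y^{\uparrow\downarrow} \;=\; \sum_{y\;{\rm covers}\;z} z^{\uparrow\downarrow},
\qquad
(A^*-\theta^*_{N-i} I)\,y^{\uparrow\downarrow} \;=\; (q^{-1}-1)q^{-(N-i)}\!\!\sum_{z\;{\rm covers}\;y}\! z^{\uparrow\downarrow}.
\]
The $z$ in the first sum satisfy ${\rm dim}\,z=N-i-1=N-(i+1)$, so $z^{\uparrow\downarrow}\in U^{\uparrow\downarrow}_{i+1}$ by Lemma \ref{lem:table}(iii); the $z$ in the second sum satisfy ${\rm dim}\,z=N-i+1=N-(i-1)$, so $z^{\uparrow\downarrow}\in U^{\uparrow\downarrow}_{i-1}$. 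This gives \eqref{eq:in3}.

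Finally I would deduce \eqref{eq:in2} from \eqref{eq:in1} and \eqref{eq:in4} from \eqref{eq:in3} by conjugating with $S$. Using $SAS^{-1}=-A$, $SA^*S^{-1}=A^*$ from Lemma \ref{lem:SAS}, the identity $\theta_{N-i}=-\theta_i$ from \eqref{eq:th}, and the four transition formulas of Lemma \ref{lem:Ss}, one computes
\[
S(A-\theta_{N-i}I)S^{-1} \;=\; -(A-\theta_{i}I),\qquad
S(A^*-\theta^*_i I)S^{-1} \;=\; A^*-\theta^*_i I,
\]
and then applies $S$ to each inclusion of \eqref{eq:in1} to transport it to the corresponding inclusion of \eqref{eq:in2}; the same manoeuvre turns \eqref{eq:in3} into \eqref{eq:in4}. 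There is no genuine obstacle here; the only point that requires attention is keeping straight whether the ambient basis of $U^{\bullet\bullet}_i$ is indexed by $\Gamma_i(\mathbf{0})$ or by $\Gamma_{N-i}(\mathbf{0})$, which dictates whether it is the eigenvalue $\theta_{N-i}$ or $\theta_i$ (respectively $\theta^*_i$ or $\theta^*_{N-i}$) that must be subtracted.
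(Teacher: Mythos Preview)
Your proof is correct and follows essentially the same approach as the paper: both rely on the action formulas of Lemmas \ref{lem:actiondd}--\ref{lem:actionuu} together with the basis descriptions in Lemma \ref{lem:table}. The only organizational difference is that the paper cites all four action lemmas directly for the four cases, whereas you verify \eqref{eq:in1} and \eqref{eq:in3} directly and then transport them to \eqref{eq:in2} and \eqref{eq:in4} via the $S$-symmetry of Lemmas \ref{lem:SAS} and \ref{lem:Ss}; this mirrors how the paper itself obtained Lemmas \ref{lem:actiondu} and \ref{lem:actionuu}, so the two arguments are equivalent in substance.
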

\begin{proof} The inclusions  \eqref{eq:in1} follow from Lemma  \ref{lem:actiondd} and Lemma \ref{lem:table}(i). 
The inclusions  \eqref{eq:in2} follow from Lemma  \ref{lem:actiondu} and Lemma \ref{lem:table}(ii). 
The inclusions  \eqref{eq:in3} follow from Lemma  \ref{lem:actionud} and Lemma \ref{lem:table}(iii). 
The inclusions  \eqref{eq:in4} follow from Lemma  \ref{lem:actionuu} and Lemma \ref{lem:table}(iv). 
\end{proof}

\noindent We finish this section with a comment. Let $0 \leq i \leq N$. In  \eqref{eq:EsV}
we gave a basis for $E^*_iV$. In Lemma \ref{lem:table} we gave a basis for  each of $U^{\downarrow \downarrow}_i$, $U^{\downarrow \uparrow}_i$,
 $U^{\uparrow \downarrow}_i$,
 $U^{\uparrow \uparrow}_i$. 
A basis for $E_iV$ can be found in \cite[Section~4]{murali}.

\section{The matrix $A$ is $Q$-polynomial with respect to $\bf 0$}
 We continue to discuss the graph $\Gamma$ from Section 3. Recall the weighted adjacency matrix $A$ from Definition \ref{def:A},
 and the diagonal matrix $A^*$ from Definition \ref{def:As}.
 In this section we show that $A^*$ is a dual adjacency matrix of $\Gamma$ with respect to $\bf 0$ and the ordering $\lbrace E_i\rbrace_{i=0}^N$.
 Using this fact, we show that 
  $A$ is $Q$-polynomial with respect to $\bf 0$. We consider the subconstitutent algebra $T=T({\bf 0}, A)$. We show that the generators $A, A^*$ satisfy 
 a pair of relations called the tridiagonal relations. We describe the irreducible $T$-modules.
 \medskip

 \noindent Recall the standard module $V$.

\begin{proposition}\label{lem:AT} For $0 \leq i \leq N$,
\begin{align}
\label{eq:3T}
A^* E_iV \subseteq E_{i-1}V+ E_iV+E_{i+1}V.
\end{align}
\end{proposition}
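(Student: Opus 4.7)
The plan is to exploit the split decompositions $\lbrace U^{\uparrow \downarrow}_i \rbrace_{i=0}^N$ and $\lbrace U^{\uparrow \uparrow}_i \rbrace_{i=0}^N$, since these are precisely the split decompositions that refine the ordering $\lbrace E_iV\rbrace_{i=0}^N$ while making $A^*$ act in a two-step fashion. The strategy is to sandwich the subspace $E_iV$ between ``lower'' and ``upper'' flag subspaces of the ordering $\lbrace E_iV\rbrace_{i=0}^N$, control $A^*$ on each flag, and intersect.

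Concretely, I would first define the flag subspaces $W^+_i = E_0V+E_1V+\cdots+E_iV$ and $W^-_i = E_NV+E_{N-1}V+\cdots+E_iV$ for $0 \leq i \leq N$, so that $W^+_i \cap W^-_i = E_iV$. By Lemma \ref{lem:3three} we have $W^+_i = U^{\uparrow \downarrow}_N + U^{\uparrow \downarrow}_{N-1} + \cdots + U^{\uparrow \downarrow}_{N-i}$, and the second inclusion of \eqref{eq:in3} (applied to each summand $U^{\uparrow \downarrow}_j$) yields $A^* U^{\uparrow \downarrow}_j \subseteq U^{\uparrow \downarrow}_j + U^{\uparrow \downarrow}_{j-1}$. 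Summing over $N-i \leq j \leq N$ and re-invoking Lemma \ref{lem:3three} with index $i+1$ gives $A^* W^+_i \subseteq W^+_{i+1}$. Symmetrically, Lemma \ref{lem:3four} expresses $W^-_{N-i}$ in terms of $\lbrace U^{\uparrow \uparrow}_j\rbrace$, and the second inclusion of \eqref{eq:in4} produces $A^* W^-_i \subseteq W^-_{i-1}$ (after reindexing).

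Finally, for $v \in E_iV$, I have $v \in W^+_i \cap W^-_i$, so $A^* v$ lies in $A^* W^+_i \cap A^* W^-_i \subseteq W^+_{i+1} \cap W^-_{i-1} = E_{i-1}V + E_iV + E_{i+1}V$, which is the desired conclusion \eqref{eq:3T}. The boundary cases $i=0$ and $i=N$ are handled cleanly by the conventions $E_{-1}V = 0$ and $E_{N+1}V = 0$, together with the vacuous inclusions $W^-_0 = V$ and $W^+_N = V$.

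The only step requiring real attention is verifying that the sum $A^*(\sum_{j=N-i}^N U^{\uparrow\downarrow}_j) \subseteq \sum_{j=N-i-1}^N U^{\uparrow\downarrow}_j$ correctly translates back into a flag of $\lbrace E_iV \rbrace$ via Lemma \ref{lem:3three}; but this is a matter of matching indices, not a substantive difficulty. Everything else is routine, so I do not anticipate a main obstacle: the essential content is already encoded in Lemma \ref{lem:AU} combined with Lemmas \ref{lem:3three} and \ref{lem:3four}.
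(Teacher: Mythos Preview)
Your proposal is correct and follows essentially the same sandwich-and-intersect argument as the paper's proof. The only cosmetic difference is that for the lower flag $E_0V+\cdots+E_iV$ you invoke the $U^{\uparrow\downarrow}$ decomposition together with \eqref{eq:in3}, whereas the paper uses the $U^{\downarrow\downarrow}$ decomposition with \eqref{eq:in1}; since Lemma~\ref{lem:3three} identifies both with the same flag, either choice works and the structure of the argument is identical.
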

\begin{proof} Using Lemma  \ref{lem:3three} and \eqref{eq:in1}, we obtain
\begin{align*}
A^* E_iV &\subseteq A^* (E_0V+ \cdots + E_iV) \\
&= A^* (U^{\downarrow \downarrow}_N + \cdots + U^{\downarrow \downarrow}_{N-i}) \\
&\subseteq 
U^{\downarrow \downarrow}_N + \cdots + U^{\downarrow \downarrow}_{N-i-1} \\
&= E_0V+\cdots + E_{i+1}V.
\end{align*}
\noindent Using Lemma \ref{lem:3four} and \eqref{eq:in4}, we obtain
\begin{align*}
A^* E_iV &\subseteq A^* (E_iV+ \cdots + E_NV) \\
&= A^* (U^{\uparrow \uparrow}_i + \cdots + U^{\uparrow \uparrow}_{N}) \\
&\subseteq 
U^{\uparrow \uparrow}_{i-1} + \cdots + U^{\uparrow \uparrow}_{N} \\
&= E_{i-1}V+\cdots + E_{N}V.
\end{align*}
By the above comments
\begin{align*}
A^* E_iV &\subseteq (E_0V+\cdots + E_{i+1}V)\cap (E_{i-1}V+\cdots + E_NV) \\
&= E_{i-1}V+ E_iV+E_{i+1}V.
\end{align*}
\end{proof}

\begin{corollary} \label{cor:dam} The matrix $A^*$ is a dual adjacency matrix of $\Gamma$ with respect to the vertex $\bf 0$ and the ordering $\lbrace E_i\rbrace_{i=0}^N$.
\end{corollary}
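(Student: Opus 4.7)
The plan is to verify the two requirements listed in Definition \ref{def:dualA} directly, each of which has already been assembled as a named result earlier in the paper. First, I would note that the definition demands that $A^*$ generate the dual adjacency algebra $M^*$, which is exactly the second assertion of Lemma \ref{lem:AsEs}. Second, the definition requires the inclusion
\begin{align*}
A^* E_i V \subseteq E_{i-1}V + E_i V + E_{i+1} V \qquad (0 \leq i \leq \mathcal D),
\end{align*}
and since $\mathcal D = N$ in our setting (as noted just below \eqref{eq:Eidim}), this inclusion is precisely the content of Proposition \ref{lem:AT}.

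There is essentially no obstacle: the corollary is a bookkeeping assembly of Lemma \ref{lem:AsEs} and Proposition \ref{lem:AT}, together with the identification $\mathcal D = N$. The only point that might deserve an explicit word is the matching of indices, to emphasize that the ordering $\lbrace E_i \rbrace_{i=0}^N$ fixed earlier (induced by the labeling $A = \sum_{i=0}^N \theta_i E_i$ via \eqref{eq:thi}) is the very ordering with respect to which the tridiagonal action in Proposition \ref{lem:AT} was proved.

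Thus the proof is a one-line citation: by Lemma \ref{lem:AsEs} the matrix $A^*$ generates $M^*$; by Proposition \ref{lem:AT} it acts tridiagonally on the decomposition $V = \sum_{i=0}^N E_i V$ of $V$ induced by the chosen ordering of the primitive idempotents of $A$; hence $A^*$ satisfies both conditions in Definition \ref{def:dualA}, and the corollary follows.
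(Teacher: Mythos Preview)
Your proposal is correct and matches the paper's own proof essentially verbatim: the paper simply cites Definition~\ref{def:dualA}, Lemma~\ref{lem:AsEs}, and Proposition~\ref{lem:AT}. Your added remark that $\mathcal D = N$ makes explicit the one identification the paper leaves implicit, but otherwise the arguments are identical.
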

\begin{proof} By Definition \ref{def:dualA}, Lemma \ref{lem:AsEs}, and Proposition \ref{lem:AT}.
\end{proof}

\begin{corollary} \label{cor:Qpoly} The  ordering $\lbrace E_i\rbrace_{i=0}^N$ is $Q$-polynomial with respect to the vertex $\bf 0$.
\end{corollary}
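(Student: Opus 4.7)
The plan is to argue that this is essentially an immediate consequence of Corollary \ref{cor:dam} together with Definition \ref{def:Qpoly}. The substantive work has already been carried out in the preceding results; what remains is simply to unwind the definition. First I would recall Definition \ref{def:Qpoly}, which says that an ordering $\lbrace E_i \rbrace_{i=0}^{\mathcal D}$ of primitive idempotents is $Q$-polynomial with respect to a vertex $x$ precisely when there exists some matrix $A^* \in {\rm Mat}_X(\mathbb R)$ that is a dual adjacency matrix of $\Gamma$ with respect to $x$ and that ordering, in the sense of Definition \ref{def:dualA}.

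Next I would invoke Corollary \ref{cor:dam}, which exhibits such a matrix explicitly: namely, the diagonal matrix $A^*$ from Definition \ref{def:As} is a dual adjacency matrix of $\Gamma$ with respect to the vertex $\bf 0$ and the ordering $\lbrace E_i \rbrace_{i=0}^N$. With this witness in hand, the defining condition of Definition \ref{def:Qpoly} is satisfied by the ordering $\lbrace E_i \rbrace_{i=0}^N$ with $x = \bf 0$, and the corollary follows in one line.

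There is no real obstacle at this step; the main technical content sits upstream, in Proposition \ref{lem:AT}, where one establishes the tridiagonal containment $A^* E_iV \subseteq E_{i-1}V + E_iV + E_{i+1}V$ for $0 \le i \le N$. That in turn was the payoff of the split-decomposition machinery developed in Section 7, in particular the intersection identities of Lemmas \ref{lem:3three} and \ref{lem:3four} together with the raising/lowering inclusions \eqref{eq:in1} and \eqref{eq:in4}. So my proposal for the proof is simply: cite Corollary \ref{cor:dam} and Definition \ref{def:Qpoly}, and conclude.
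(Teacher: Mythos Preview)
Your proposal is correct and matches the paper's own proof essentially verbatim: the paper's proof reads ``By Definition \ref{def:Qpoly} and Corollary \ref{cor:dam}.'' Your additional commentary about the upstream machinery (Proposition \ref{lem:AT}, Lemmas \ref{lem:3three}--\ref{lem:3four}, inclusions \eqref{eq:in1} and \eqref{eq:in4}) is accurate but not needed for this corollary itself.
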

\begin{proof} By Definition \ref{def:Qpoly} and Corollary \ref{cor:dam}.
\end{proof}
\noindent The following is the main result of the paper.

\begin{theorem} \label{thm:main1}The matrix $A$ is $Q$-polynomial with respect to the vertex $\bf 0$.
\end{theorem}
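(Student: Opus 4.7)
The plan is essentially to observe that the theorem is an immediate corollary of Corollary \ref{cor:Qpoly}, combined with the definition of the $Q$-polynomial property for the matrix $A$ given in Definition \ref{def:Qx}. The substantive work has already been done in the preceding results, so the proof itself should be a one-line invocation.

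To spell out the logic: Definition \ref{def:Qx} says that $A$ is $Q$-polynomial with respect to a vertex $x$ whenever there exists \emph{some} ordering of the primitive idempotents of $A$ that is $Q$-polynomial with respect to $x$ in the sense of Definition \ref{def:Qpoly}. Corollary \ref{cor:Qpoly} exhibits exactly such an ordering, namely the ordering $\{E_i\}_{i=0}^N$ already fixed from Section 5 (the one for which $A=\sum_{i=0}^N \theta_i E_i$ with $\theta_i$ as in \eqref{eq:thi}). So I would simply cite these two results.

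For completeness I would also recall the provenance of Corollary \ref{cor:Qpoly}: it asserts the existence of a dual adjacency matrix of $\Gamma$ with respect to $\bf 0$ and $\{E_i\}_{i=0}^N$, and this matrix is explicitly the diagonal matrix $A^*$ of Definition \ref{def:As}, via Corollary \ref{cor:dam}. The only nontrivial ingredient is the containment \eqref{eq:3T}, which was established in Proposition \ref{lem:AT} using the split decompositions and the triangular actions in Lemma \ref{lem:AU}. There is no remaining obstacle: the theorem is a definitional unwinding, with all real content located in Proposition \ref{lem:AT}.

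Thus my proof is just: \emph{By Corollary \ref{cor:Qpoly} and Definition \ref{def:Qx}.}
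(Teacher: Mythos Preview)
Your proposal is correct and matches the paper's own proof exactly: the paper simply writes ``By Definition \ref{def:Qx} and Corollary \ref{cor:Qpoly}.'' Your surrounding commentary on the provenance of Corollary \ref{cor:Qpoly} is also accurate.
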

\begin{proof} By Definition \ref{def:Qx} and Corollary \ref{cor:Qpoly}.
\end{proof}

\noindent For the rest of the paper we adopt the following notation.
\begin{definition}\label{def:T} \rm Let $T=T({\bf 0}, A)$ denote 
the subconstituent algebra of $\Gamma$ with respect to $\bf 0$ and $A$.
\end{definition}
\noindent By Lemma \ref{lem:TAA}, the algebra $T$ is generated by $A, A^*$. Next we will display two relations satisfied by these generators.

\begin{proposition} \label{thm:TD} The matrices $A$ and $A^*$ satisfy
\begin{align*}
&A^3 A^* - (q+q^{-1}+1) A^2 A^* A+(q+q^{-1}+1)AA^*A^2 - A^*A^3 \\
&\qquad \quad= q^{N-2}(q+1)^2 (AA^*-A^*A),\\
&A^{*3} A-(q+q^{-1}+1) A^{*2}AA^* + (q+q^{-1}+1) A^*A A^{*2} - A A^{*3} = 0.
\end{align*}
\end{proposition}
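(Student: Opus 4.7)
The plan is to decompose $A$ with respect to the dual primitive idempotent grading and exploit the resulting $q$-commutations with $A^*$. Set $A^+ := \sum_{i=0}^{N-1} E^*_{i+1} A E^*_i$ and $A^- := \sum_{i=1}^{N} E^*_{i-1} A E^*_i$; by Lemma~\ref{lem:AsTD} we have $A = A^+ + A^-$, and Lemma~\ref{lem:Ay} gives $A^+\hat y = \sum_{w\;{\rm covers}\;y}\hat w$ and $A^-\hat y = q^{{\rm dim}\,y-1}\sum_{y\;{\rm covers}\;z}\hat z$. Because $A^* E^*_iV = q^{-i} E^*_iV$ and $A^\pm$ shifts this grading by $\pm 1$, one obtains the central identities
\[ A^* A^+ = q^{-1} A^+ A^*, \qquad A^* A^- = q A^- A^*. \]

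For the second relation, write $[X,Y]_a := XY - aYX$ and observe that the claimed equality is $[A^*,[A^*,[A^*,A]_q]_{q^{-1}}] = 0$, since expansion of this triple bracket reproduces the displayed left hand side. Using the $q$-commutations above, $[A^*,A]_q = (q^{-1}-q)A^+ A^*$, and then
\[ [A^*, (q^{-1}-q)A^+ A^*]_{q^{-1}} = (q^{-1}-q)\bigl(A^* A^+ A^* - q^{-1}A^+ A^{*2}\bigr) = 0, \]
because $A^* A^+ A^* = q^{-1} A^+ A^{*2}$. The outermost commutator is then zero as well. (Equivalently one obtains the stronger degree-two identity $A^{*2} A - (q+q^{-1})A^* A A^* + A A^{*2} = 0$, from which the degree-three form follows by one more bracket with $A^*$.)

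For the first relation, the analogous reduction yields
\[ [A,[A,A^*]_q]_{q^{-1}} = (q - q^{-1})\,A^* \mathcal H, \qquad \mathcal H := q^{-1} A^- A^+ - q A^+ A^-, \]
so the claim becomes $[A, A^* \mathcal H] = q^{N-1}(q+1)(q-1)^{-1}\,[A,A^*]$. The key ingredient is the $L_N(q)$-specific identity
\[ A^- A^+ - q A^+ A^- \;=\; \frac{q^N I - A^{*-2}}{q-1}, \]
which I would verify on the basis $\{\hat y\}$: for $y\in\Gamma_i({\bf 0})$, Lemma~\ref{lem:locdr} gives the $\hat y$-coefficient $q^i([N-i]_q - [i]_q) = (q^N - q^{2i})/(q-1)$, while for $y'\neq y$ with $\dim(y\cap y') = i-1$ the coefficient $q^i - q\cdot q^{i-1} = 0$ because the unique common upper cover $w = y+y'$ and the unique common lower cover $z = y\cap y'$ contribute with $q$-weights that exactly balance; other $y'$ contribute $0$. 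Combining this identity with the commutations $A^\pm A^{*-2} = q^{\mp 2} A^{*-2} A^\pm$ (immediate from Paragraph~1), the bracket $[A, A^* \mathcal H] = [A,A^*]\mathcal H + A^*[A,\mathcal H]$ can be expanded: all cubic-in-$A^\pm$ contributions cancel against one another via the same identity applied in rewritten form, the $A^{*-2}$-proportional pieces cancel between the two summands, and the remaining $q^N$-proportional piece is precisely $q^{N-1}(q+1)(q-1)^{-1}[A, A^*]$.

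The main obstacle is the bookkeeping in the last paragraph: organizing the expansion of $[A, A^*\mathcal H]$ so that the mixed cubic terms $A^{\pm 2}A^{\mp}$ and $A^+A^-A^{\pm}$ cancel, leaving only the $q^N$-proportional term. This cancellation rests squarely on the identity $A^- A^+ - qA^+A^- = (q^N I - A^{*-2})/(q-1)$, and is exactly where the factor $q^{N-2}(q+1)^2$ on the right hand side of the first relation originates.
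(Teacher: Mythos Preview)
Your argument is correct, but it proceeds along a genuinely different axis from the paper's proof.

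The paper sandwiches each relation between primitive idempotents: for the first relation it writes $C_1 = \sum_{i,j} E_i C_1 E_j$, uses $E_iA=\theta_iE_i$, $AE_j=\theta_jE_j$ to turn $E_iC_1E_j$ into $E_iA^*E_j$ times a polynomial in $\theta_i,\theta_j$, and then invokes the already-established $Q$-polynomial fact $E_iA^*E_j=0$ for $|i-j|>1$ (Lemma~\ref{lem:Trel}, via Proposition~\ref{lem:AT}) together with a direct check that the polynomial vanishes for $|i-j|\le 1$. The second relation is handled symmetrically with $E^*_i$.

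You instead work entirely on the $E^*$-graded side, decomposing $A=A^++A^-$ and using the $q$-commutations $A^*A^\pm=q^{\mp1}A^\pm A^*$ together with the $U_q(\mathfrak{sl}_2)$-type relation $A^-A^+-qA^+A^-=(q^NI-A^{*-2})/(q-1)$, which you verify combinatorially. This route is self-contained: it does not use the $Q$-polynomial property of $A$ at all, and it yields the bonus degree-two identity $A^{*2}A-(q+q^{-1})A^*AA^*+AA^{*2}=0$. The cost is the bookkeeping you flag: one really does need to push the cubic monomials $A^+A^-A^+$, $(A^+)^2A^-$, $(A^-)^2A^+$, $A^-A^+A^-$ through the commutator relation to see them collapse (they do, and the $A^{*-1}$ terms then cancel exactly, leaving $q^{N-1}(1+q^{-1})(A^+-qA^-)A^* = \tfrac{q^{N-1}(q+1)}{q-1}[A,A^*]$). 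The paper's spectral approach trades this explicit cancellation for dependence on the earlier split-decomposition machinery.
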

\begin{proof} Concerning the first equation, let $C_1$ denote the left-hand side minus the right-hand side. We show that $C_1=0$. We have
\begin{align*}
 C_1 = I C_1 I = \sum_{i=0}^N \sum_{j=0}^N E_i C_1 E_j.
 \end{align*}
 \noindent For $0 \leq i,j\leq N$ we show that $E_i C_1 E_j = 0$. Using $E_i A = \theta_i E_i$ and $A E_j = \theta_j E_j$, we obtain
 \begin{align*}
 E_iC_1E_j & = E_i A^* E_j \Bigl( \theta^3_i - (q+q^{-1} + 1) \theta^2_i \theta_j + (q+ q^{-1}+1) \theta_i \theta^2_j - \theta^3_j - q^{N-2} (q+1)^2 (\theta_i - \theta_j)\Bigr)
 \\
 &= E_i A^* E_j (\theta_i - \theta_j)\bigl( \theta^2_i - (q+q^{-1}) \theta_i \theta_j + \theta^2_j - q^{N-2} (q+1)^2\bigr).
 \end{align*}
 \noindent We examine the factors in the above line. 
 By Lemma \ref{lem:Trel} we have $E_iA^* E_j=0$ if $\vert i-j\vert > 1$. Of course $\theta_i - \theta_j = 0$ if $i=j$. Using \eqref{eq:thi} we obtain
 \begin{align*}
 \theta^2_i - (q+q^{-1}) \theta_i \theta_j + \theta^2_j - q^{N-2} (q+1)^2=0 \qquad \qquad {\mbox{\rm if $\vert i - j \vert = 1$.}}
 \end{align*}
 \noindent By these comments $E_iC_1E_j=0$.
 We have shown that $C_1=0$, so  the first equation holds. Concerning the second equation, let $C_2$ denote the left-hand side. We have
 \begin{align*}
 C_2 = I C_2 I = \sum_{i=0}^N \sum_{j=0}^N E^*_i C_2 E^*_j.
 \end{align*}
 \noindent For $0 \leq i,j\leq N$ we show that $E^*_i C_2 E^*_j = 0$. Using $E^*_i A^* = \theta^*_i E^*_i$ and $A^* E^*_j = \theta^*_j E^*_j$, we obtain
 \begin{align*}
 E^*_iC_2E^*_j & = E^*_i A E^*_j \Bigl( \theta^{*3}_i - (q+q^{-1} + 1) \theta^{*2}_i \theta^*_j + (q+ q^{-1}+1) \theta^*_i \theta^{*2}_j - \theta^{*3}_j \Bigr)
 \\
 &= E^*_i A E^*_j (\theta^*_i - \theta^*_j)(\theta^*_i- q \theta^*_j)(\theta^*_i - q^{-1} \theta^*_j).
 \end{align*}
 \noindent We examine the factors in the above line.
 By Lemma \ref{lem:Trel} we have $E^*_iAE^*_j=0$ if $\vert i-j\vert > 1$. Of course $\theta^*_i - \theta^*_j = 0$ if $i=j$. By \eqref{eq:thsi} we 
 have $\theta^*_i - q \theta^*_j=0$ if $j-i=1$ and $\theta^*_i - q^{-1} \theta^*_j = 0 $ if $i-j=1$.
 \noindent By these comments $E^*_iC_2E^*_j=0$.
 We have shown that $C_2=0$, so  the second equation holds. 
 \end{proof}

\begin{note}\rm The equations in Proposition \ref{thm:TD} are an instance of the tridiagonal relations \cite{qSerre}.
\end{note}

\noindent  Next we discuss the representation theory of $T$. Before we go into detail, we have some remarks about $L_N(q)$.
The poset $L_N(q)$ is uniform in the sense of \cite{uniform}. Indeed $L_N(q)$ appears as the special case $M=N$ of \cite[Example~3.1(5)]{uniform}.
In \cite[p.~195]{uniform} we defined  the incidence algebra of $L_N(q)$. Comparing that definition with Definitions \ref{def:Talg}, \ref{def:T} we find
that the incidence algebra of $L_N(q)$ is equal to $T$.
By \cite[Theorem~2.5]{uniform}, the standard module $V$ is a direct sum of irreducible $T$-modules. 
In Lemmas \ref{lem:Wfacts}, \ref{lem:Wfacts2} and Proposition \ref{prop:TDpair} we described the irreducible $T$-modules.
We now give a more detailed description using 
\cite[Theorem~2.5, Theorem~3.3(5)]{uniform} along with \cite[Section~2]{murali}. 
Let $W$ denote an irreducible $T$-module, with endpoint $r$, dual endpoint $t$, and diameter $d$. Then
$0 \leq r \leq N/2$ and $t=r$ and $d=N-2r$.
 Moreover, the subspaces $E_iW$ and $E^*_iW$ have dimension one for $r \leq i \leq N-r$. 
By these comments
the sequence $(A; \lbrace E_i \rbrace_{i=r}^{N-r}; A^*; \lbrace E^*_i \rbrace_{i=r}^{N-r})$  acts on $W$ as a Leonard system in the sense of
\cite[Definition~1.4]{ter2}. For notational convenience, let $\Phi$ denote this Leonard system. By Lemma \ref{lem:AsTD} we see that $\Phi$  is bipartite
in the sense of \cite[Definition~5.1]{hanson}.
Using the bipartite feature and the form of the eigenvalues \eqref{eq:thi}, \eqref{eq:thsi} 
we find that $\Phi$  has
dual $q$-Krawtchouk type \cite[Example~20.7]{LSnotes} with parameters
\begin{align*}
&d(\Phi) = N-2r, \qquad \qquad h(\Phi) = \frac{q^{N-r}}{q-1}, \qquad \quad h^*(\Phi)=q^{-r}, \\
&s(\Phi) = -q^{2r-N-1}, 
\qquad \qquad \theta_0(\Phi) = q^r \lbrack N-2r \rbrack_q, \qquad \qquad \theta^*_0(\Phi) = q^{-r}.
\end{align*}
The Leonard system $\Phi$ is determined up to isomorphism by the above six parameters \cite[Proposition~9.8]{LSnotes}, and these parameters are
determined by $r, N, q$. By these comments  and \cite[Lemma~9.7]{cerzo},  a pair of irreducible $T$-modules are isomorphic if and only if they have the same endpoint.
For $0 \leq r \leq N/2$ let $\mu_r$ denote the multiplicity with which the irreducible $T$-module
with endpoint $r$ appears in the standard module $V$.  It follows from
 \cite[Theorem~3.3(5)]{uniform} that
 $\mu_0= 1$ and $\mu_r = \binom{N}{r}_q - \binom{N}{r-1}_q$ for $1 \leq r \leq N/2$.

\section{Acknowledgement} The author thanks Pierre-Antoine Bernard for a discussion about how the weighted adjacency matrix
in the present paper shows up in the context of symplectic dual polar graphs.

\bigskip

\noindent Paul Terwilliger \hfil\break
\noindent Department of Mathematics \hfil\break
\noindent University of Wisconsin \hfil\break
\noindent 480 Lincoln Drive \hfil\break
\noindent Madison, WI 53706-1388 USA \hfil\break
\noindent email: {\tt terwilli@math.wisc.edu }\hfil\break

\end{document}